\newtheorem{theorem}{Theorem}[section]
\newtheorem{proposition}{Proposition}[section]
\newtheorem{corollary}{Corollary}[section]
\newtheorem{lemma}[theorem]{Lemma}
\numberwithin{equation}{section}
\numberwithin{theorem}{section}
\newcommand{\mbf}[1]{\mbox{\boldmath $#1$}}
\newcommand{\bth}{{\mbf \theta}}
\newcommand{\al}{{\mbf \alpha}}
\newcommand{\bu}{{\mbf u}}
\newcommand{\btau}{{\mbf \tau}}
\DeclareMathAlphabet\mathbfcal{OMS}{cmsy}{b}{n}
\def\s{\sum_{t=1}^n}
\def\bth{\mbox{\boldmath$\theta$}}
\def\bLam{\mbox{\boldmath$\Lambda$}}
\def\bDelta{\mbox{\boldmath$\Delta$}}
\def\bSigma{\mbox{\boldmath$\Sigma$}}
\def\bXi{\mbox{\boldmath$\Xi$}}
\def\bmu{\mbox{\boldmath$\mu$}}
\def\bxi{\mbox{\boldmath$\xi$}}
\def\bpsi{\mbox{\boldmath$\psi$}}
\def\bGamma{\mbox{\boldmath$\Gamma$}}
\def\bOmega{\mbox{\boldmath$\Omega$}}
\def\bUpsilon{\mbox{\boldmath$\Upsilon$}}
\def\bvp{\mbox{\boldmath$\varphi$}}
\def\bepsilon{\mbox{\boldmath$\epsilon$}}
\def\R{{\mathbb R}}
\def\Z{{\mathbb Z}}
\def\C{{\mathbb C}}
\def\D{{\mbf D}}
\def\N{{\mathbb N}}
\def\K{{\mbf K}}
\def\J{{\mbf J}}
\def\M{{\mbf M}}
\def\bz{{\mbf z}}
\def\0{{\mbf 0}}
\def\W{{\mbf W}}
\def\P{{\mbf P}}
\def\Q{{\mbf Q}}
\def\S{{\mbf S}}
\def\T{{\mbf T}}
\def\A{{\mbf A}}
\def\B{{\mbf B}}
\def\CC{{\mbf C}}
\def\DD{{\mbf D}}
\def\F{{\mbf F}}
\def\H{{\mbf H}}
\def\hbf{{\mbf h}}
\def\G{{\mbf G}}
\def\I{{\mbf I}}
\def\X{{\mbf X}}
\def\x{{\mbf x}}
\def\y{{\mbf y}}
\def\RR{{\mbf R}}
\def\ZZ{{\mbf Z}}
\def\0{{\mbf 0}}
\def\z{{\mbf z}}
\def\e{{\mbf e}}
\def\n{^{(n)}}
\def\pms{\mspace{-1mu}{\scriptscriptstyle\pm}}
\def\Fstar{F^\star_{d;\mathfrak{f}}}
\newcommand{\cqfd}{\ensuremath{\hfill\Box}}
\newcommand\tenq[2][1]{%
\def\useanchorwidth{T}%
\ifnum#1>1%
\stackunder[0pt]{\tenq[\numexpr#1-1\relax]{#2}}{\scriptscriptstyle\thicksim}%
\else%
\stackunder[1pt]{#2}{\scriptscriptstyle\thicksim}%
\fi%
}
\title{ \Large \bf \sc Center-Outward R-Estimation for  \\ Semiparametric VARMA Models}
\author{M. Hallin, D. La Vecchia,  and H. Liu    \\
{\small{ ECARES, Université libre de Bruxelles CP 114/4 \\Avenue F.D. Roosevelt 50 - 
B-1050 Bruxelles, Belgium \\
 Email: mhallin@ulb.ac.be \\ \vspace{0.1cm}
 
 Research Center for Statistics, University of Geneva \\ Boulevard du Pont d'Arve 40 - CH-1211 Geneva, Switzerland \\
 Email: davide.lavecchia@unige.ch \\ \vspace{0.1cm}

 Department of Mathematics and Statistics, Lancaster University \\ LA1 4YF Lancaster, UK \\
 Email: h.liu11@lancaster.ac.uk 
 
   } }}
\date{}
\begin{document} 
\maketitle


\begin{abstract}
{\small{We propose a new class of R-estimators for semiparametric VARMA models in which the innovation density plays the role of the nuisance parameter. Our estimators are based on the novel concepts of multivariate center-outward ranks and signs.  We show that these concepts,  combined with Le Cam's asymptotic  theory of statistical experiments, yield  a class of semiparametric estimation procedures, which are   efficient (at a given reference density), 
root-$n$ consistent, and asymptotically normal under a broad class of (possibly non elliptical) actual innovation densities.  No kernel density estimation is required to implement our procedures. A Monte Carlo   comparative study of our R-estimators and other routinely-applied competitors  demonstrates the benefits of the novel methodology, in large and small sample.   Proofs, computational aspects, and further numerical results are available in the supplementary material.}}
\end{abstract}

\textit{Keywords} {\small Multivariate ranks, Distribution-freeness, 
 Local asymptotic normality, Time series,
  Measure transportation, Quasi likelihood estimation, Skew innovation density. }



\section{Introduction}
\subsection{Quasi-maximum likelihood and R-estimation}
Gaussian {\it quasi}-likelihood methods are pervasive in several areas of statistics. Among them is  time series analysis,  univariate and multivariate, linear and non-linear. 
In particular, quasi-maximum 
likelihood estimation (QMLE))  and  correlogram-based testing are the daily practice golden standard for ARMA and VARMA models. They only require the specification of 
 the first two conditional moments, which depend on an unknown Euclidean parameter, while a Gaussian (misspecified) innovation density  is assumed.  Their properties  
 are generally considered as fully satisfactory: QMLEs, in particular, are   root-$n$  consistent, parametrically efficient under Gaussian innovations, and asymptotically normal under finite fourth-order moment assumptions. 

Despite  their popularity, QMLE methods are not without some undesirable consequences, though, which are often overlooked: 
{\it (i)} while achieving efficiency under Gaussian innovations, their asymptotic performance can be quite poor under non-Gaussian ones;  
{\it (ii)} due to technical reasons (the {\it Fisher consistency} requirement), the choice of a quasi-likelihood is always the most pessimistic one: quasi-likelihoods automatically are based on the  {least favorable} innovation density  (here, a Gaussian one); {\it (iii)} root-$n$ consistency is far   from  being  uniform   across innovation densities; {\it (iv)} actual fourth-order moments may be infinite.  

In principle, the ultimate theoretical remedy to those problems is the semiparametric
estimation method described in the monograph by Bickel et al. (1993), which yields uniformly, locally and asymptotically, semiparametrically efficient estimators. For VARMA  models, the semiparametric approach does not specify the innovation density (an infinite-dimensional nuisance) and the estimators based on Bickel et al. (1993) methodology are uniformly, locally and asymptotically {\it parametrically} efficient (VARMA models are {\it adaptive}, thus semiparametric and parametric efficiency  coincide). However, semiparametric  estimation procedures are not easily implemented, since they rely on kernel-based estimation of the actual innovation density (hence the choice of a kernel, the selection of a bandwidth) and the use of sample splitting techniques. All these niceties require relatively large samples and are hard to put into practice even for univariate time series. 

A more flexible and computationally less heavy alternative in the presence of unspecified noise or  innovation densities is   R-estimation, which reaches efficiency at  some chosen reference density  (not necessarily Gaussian or least favorable) or class of densities. R-estimation has been proposed first in the context of location (Hodges and Lehmann~1956) and regression models with independent observations  
(Jure\v{c}kov\' a~1971, Koul~1971, van Eeden~1972, Jaeckel~1972).   Later on, it was extended to autoregressive time series   (Koul and Saleh 1993, Koul and Ossiander~1994, Terpstra et al.~2001, Hettmansperger and McKean~2008, Mukherjee and Bai~2002, Andrews~2008, 2012) and  non-linear  time series (Mukherjee~2007, Andreou and Werker~2015,  Hallin and La Vecchia~2017, 2019). 

Multivariate extensions of these approaches, however, run into the major difficulty of defining an adequate concept of ranks in the multivariate context. This is most regrettable, as the drawbacks of quasi-likelihood methods for  observations in dimension $d=1$ only get worse as the dimension~$d$  increases (see Section~\ref{subsec.glance} for a numerical example in dimension~$d=2$) while  the use of the semiparametric method of Bickel et al.\ becomes  problematic:  the higher the dimension, the more delicate multivariate kernel density estimation and the larger the required sample size. A natural question is thus: ``Can R-estimation palliate the drawbacks of the QMLE and the Bickel et al.\ technique in dimension~$d\geq 2$ the way it does in dimension~$d=1$?''    This question immediately comes up against another one: ``What are ranks and signs, hence, what is  R-estimation, in dimension~$d\geq 2$?'' Indeed, starting with dimension two, the real space~$\mathbb{R}^d$ is no longer canonically ordered.

The main contribution of this paper is to provide a positive answer to these questions.  To this end, we propose a multivariate version of  R-estimation,  establish its asymptotic properties (root-$n$ consistency and asymptotic normality), and  demonstrate its feasibility and excellent finite-sample performance in the context of  semiparametric VARMA models. Our approach  builds on Chernozhukov et al.~(2017), Hallin~(2017), and Hallin et al.~(2020a), who introduce novel concepts of {\it center-outward ranks and signs} based on measure transportation ideas.  These center-outward ranks and signs (see Section~3.2 for details) enjoy all the properties 
 that make traditional univariate ranks a successful tool of inference. In particular, they are distribution-free (see Hallin et al.~(2020) for details), thus 
 preserve the validity of rank-based procedures irrespective of the possible misspecification of the innovation density. 
 Moreover, they are invariant with respect to shift and global scale factors and equivariant under orthogonal transformations; see Hallin et al.\ (2020b). 
Extensive numerical exercises reveal the  finite-sample superiority of our R-estimators over the conventional QMLE in the presence of asymmetric innovation densities (skew-normal, skew-$t$, Gaussian mixtures) and in the presence of outliers. All these advantages, however,  do not come at the cost of a loss of efficiency under symmetry.
 
Other notions of multivariate ranks and signs have been proposed in the statistical literature. Among them,  the componentwise ranks (Puri and Sen~1971), the spatial ranks (Oja~2010), 
the depth-based ranks (Liu~1992; Liu and Singh~1993), and the {Mahalanobis ranks and signs} (Hallin and Paindaveine~(2002a)). 
Those ranks and signs all have their own merits but also some drawbacks, which make them unsuitable for our needs (essentially, they are not  distribution-free, or not maximally so); we refer to 
the introduction of Hallin et al.~(2020) for details. {The Mahalanobis ranks and signs} have been successfully considered for testing purposes  in the time series context  (Hallin and Paindaveine~2002b,  2004). However, no results on estimation are available, and their distribution-freeness property  is limited to elliptical densities---a very strong symmetry assumption which we are dropping here.  

Leaving aside Wasserstein-distance-based methods, our contribution  constitutes  the first  inferential application of measure transportation ideas to semiparametric inference for multivariate time series. Measure transportation, which goes back to Gaspard Monge (1746-1818) and his 1781 {\it M\' emoire sur la Th\' erorie des D\' eblais et des Remblais}, in the past few years has become one of the most active and fertile subjects in pure and applied contemporary  mathematics. Despite some crucial forerunning contributions (Cuesta-Albertos and Matr\' an~(1997); Rachev and R\" uschendorf~(1998)), statistics was somewhat slower to join. However, some recent papers on multiple-output quantile regression (Carlier et al.~2016), distribution-free tests of vector independence and multivariate goodness-of-fit  (Boeckel et al.~(2018); Deb and Sen~(2019); Shi et al.~(2019); Ghosal and Sen~(2019)) demonstrate the growing interest of the statistical community in measure transportation results. We refer to Panaretos and Zemel~(2019) for a review. 

\subsection{A motivating example}\label{subsec.glance}

As a justification of the practical interest of our R-estimation, let us consider 
the very simple but highly representative   motivating example of a bivariate VAR(1) model 
\begin{equation}\label{VARex}\left(\I_d - \A L \right) \X_t =  \bepsilon_t, \quad t \in \Z,
\end{equation}
with  parameter   vec$(\A)=:(a_{11}, a_{21}, a_{12}, a_{22})^\prime$ taking the  
value $(0.2, -0.6, 0.3, 1.1)^\prime$. We generated~300 replications of a realization of length $n=1000$ of the stationary solution of \eqref{VARex} with two innovation densities---a spherical Gaussian one and a Gaussian mixture   (see (\ref{Eq. Mixture}) for details)---which both satisfy the   conditions for QMLE validity. The resulting boxplots of the QMLE and the Gaussian score (van der Waerden)  R-estimator (see Section \ref{Sec: examples} for a definition) are shown in Figure~\ref{box3Mix}, along with the mean squared error (MSE) ratios of the QMLE over the R-estimator. 

Even a vary rapid inspection of the plots reveals that, under the mixture distribution, the R-estimator yields sizeably smaller MSE values than the QMLE. For instance, as far as the estimation of $a_{11}$ is concerned, the MSE ratio is 2.657: the R-estimator is  strikingly less dispersed  than the QMLE. 
On the other hand, under  Gaussian innovations (hence, with the QMLE coinciding with the  MLE and achieving parametric efficiency),   the QMLE and the R-estimator perform similarly, with MSE ratios extremely  close to one for all the  parameters. 
While our R-estimator quite significantly outperforms  the QMLE under the mixture distribution, thus, this benefit comes  at no cost  
under  Gaussian innovations. Further numerical results are provided in Section~\ref{Sec: MC} and Appendix~\ref{Supp.sim};  they all lead to the same conclusion.

\begin{figure}[htbp]
\caption{Boxplots of the QMLE and the R-estimator (van der Waerden) of the para\-meters~$a_{11}, a_{21}, a_{12}$, and $ a_{22}$ of the bivariate VAR(1) \eqref{VARex}  under the Gaussian mixture  \eqref{Eq. Mixture} (upper panel) and spherical Gaussian (lower panel) innovation densities, respectively (300 replications of length $n=1000$). In each panel, the MSE ratio of the QMLE with respect to the R-estimator is reported. The horizontal line represents the actual parameter value.}
\hspace{0.5cm}
\includegraphics[width=1\textwidth, height=0.7\textwidth]{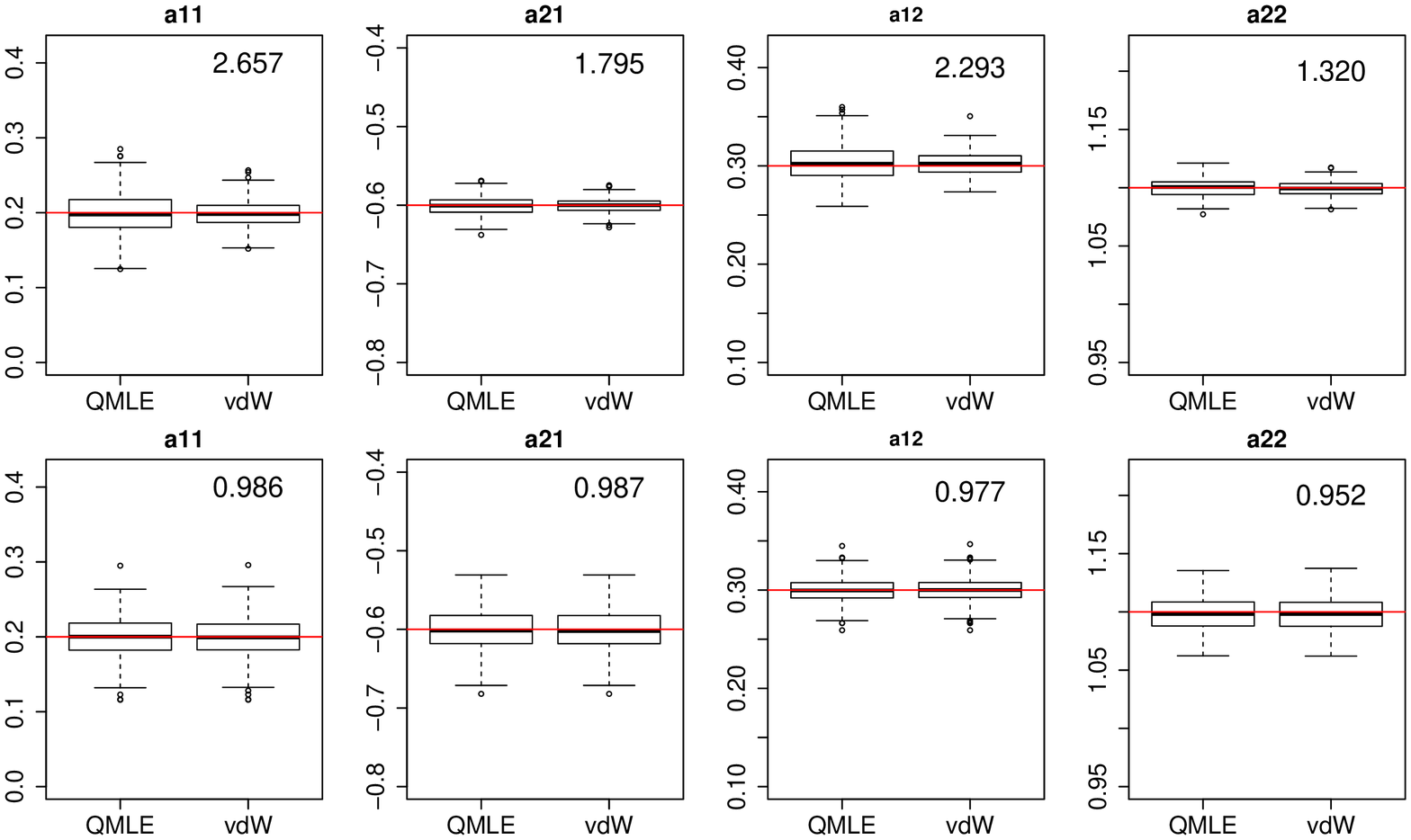}
\label{box3Mix}
\end{figure}\vspace{-3mm}

\subsection{Outline of the paper}

The rest of the paper is organized as follows. Section~\ref{seclan} briefly recalls a local asymptotic normality result for the VARMA model with  
nonelliptical innovation density: an analytical form of the central sequence  as a function of the residuals is provided, which  indeed plays a key role in the construction of our estimators. 
In Section~\ref{secranks}, 
 we introduce the measure transportation-based notions of center-outward ranks and signs; for the sake of analogy, we also recall the definition of Mahalanobis ranks and signs, and shortly discuss their respective invariance properties.  In Section \ref{SecSR}, we explain the key idea of our construction of R-estimators, which consists in replacing the residuals 
appearing in central sequence with some adequate function of their center-outward ranks and  signs, yielding a rank-based version of the latter: our R-estimators are obtained by incorporating that rank-based  central sequence into a classical Le Cam one-step procedure. Root-$n$ consistency and asymptotic normality are established in   Proposition~\ref{asy}  under  absolutely continuous innovation densities  admitting finite second moment.  
Some standard score functions are discussed in Section~\ref{Sec: examples}. Section~\ref{Sec: MC} presents simulation results under various densities of the various estimators; comparing their performance confirms the findings of the motivating example of Section~\ref{subsec.glance}.  In Section~\ref{empirsec}, we show how  our R-estimation method applies to a real dataset  borrowed from econometrics, where a VARMA($3, 1$) model is identified. Finally, Section~\ref{Sec:concl} concludes and provides some perspectives for future research.

All proofs are concentrated in  Appendices~\ref{AppLAN} and \ref{Tech.proofs}. Sections~\ref{seclan} and~\ref{secranks} are technical and can be skipped at  first reading: the applied statistician can focus directly on the description of 
one-step R-estimation in (\ref{onestep.def}) (implementation details are provided in Appendix~\ref{secalg}) and  the numerical results of Sections~\ref{Sec: MC} and~\ref{empirsec} (Appendix~\ref{Supp.sim}).

\section{Local asymptotic normality}\label{seclan}

Local asymptotic normality (LAN) is an essential ingredient in the construction of our estimators and the derivation of their asymptotic properties. 
In this section, referring to results by Garel and Hallin~(1995) and Hallin and Paindaveine (2004),  we state, along with the required assumptions, the LAN property for stationary VARMA models, with an explicit expression for the central sequence to be used later on.  
The corresponding technical material is available in Appendices~\ref{AppLAN} and \ref{Tech.proofs}. \vspace{-2mm}

\subsection{Notation and assumptions}\label{secdef}

We throughout consider the $d$-dimensional VARMA($p, q$) model
\begin{equation}
\Big(\I_d - \sum_{i = 1}^p \A_i L^i \Big) \X_t = \Big(\I_d + \sum_{j = 1}^q \B_j L^j\Big) \bepsilon_t, \quad t \in \Z, \vspace{-2mm}\label{VARMA_mod}
\end{equation}
where $\A_1, \ldots , \A_p, \B_1, \ldots , \B_q$ are $d \times d$ matrices, $L$ is the lag operator, and $\{\bepsilon_t; t \in \Z\}$ is an i.i.d.\  mean-zero innovation process  with density $f$. The observed series is $\{\X^{(n)}_1, \ldots , \X^{(n)}_n\}$ (superscript\! $^{(n)}\!$  omitted whenever possible) and the $(p+q)d^2$-dimensional parameter of interest~is
$$\bth := \big((\text{vec}{\A_1})^\prime, \ldots , (\text{vec}{\A_p})^\prime, (\text{vec}{\B_1})^\prime, \ldots , (\text{vec}{\B_q})^\prime\big)^\prime,$$  
 where $^\prime$ indicates 
  transposition. Letting~$\A(L) := \I_d - \sum_{i = 1}^p \A_i L^i$, and~$\B(L) := \I_d+ \sum_{j = 1}^q \B_j L^j$,   the following   conditions
 are assumed to hold. 
 \smallskip
 
\noindent\textbf{Assumption (A1)}.  
{\it (i)} All solutions of the determinantal equations
$$\text{det$\left(\I_d - \sum_{i = 1}^p \A_i z^i \right) = 0\quad$ and \quad det$\left( \I_d + \sum_{i = 1}^q \B_i z^i \right) = 0$, \quad$z \in \C$}$$
 lie outside the unit ball in $\C$;
 {\it (ii)}  $|\A_p| \neq 0 \neq |\B_q|$; 
 {\it (iii)}   $\I_d$ is  the greatest
common left divisor of~$\I_d - \sum_{i = 1}^p \A_i z^i$ and $\I_d + \sum_{i = 1}^q \B_i z^i$. \medskip 

{Assumption (A1) is standard in the time series literature; the restrictions it imposes on the model parameter  ensure the asymptotic stationarity of any  
solution to (\ref{VARMA_mod}).} 

To proceed further,  we assume that the innovation density~$f$ is non-vanishing over~$\R^d$.  
More precisely we  assume that, for all $c \in \R^+$, there exist constants $b_{c; f}$ and $a_{c; f}$ in $\mathbb{R}$ such that~$0<b_{c; f}  \leq a_{c; f}<\infty$ and $ b_{c; f} \leq f(\x) \leq a_{c; f}$
 for~$\Vert \x\Vert  \leq c$: 
denote by ${\cal F}_d$ this family of densities. 

\noindent\textbf{Assumption (A2).} The innovation density $f\in{\cal F}_d$ is such that 
{\it (i)}   $\int \x f(\x) d\mu = \0$ and the covariance  $ \bXi:=\int \x \x^\prime f(\x) d\mu$ is positive definite; 
{\it (ii)} there exists a square-integrable random vector $\DD f^{1/2}$ such that, for all sequence~$\hbf\in\mathbb{R}^d$ such that  $\0 \neq \hbf \rightarrow \0$,
$$(\hbf^\prime \hbf)^{-1} \int [f^{1/2}(\x + \hbf) - f^{1/2}(\x) - \hbf^\prime \DD  f^{1/2} (\x)]^2 d\mu \rightarrow 0,$$
i.e., $f^{1/2}$ is mean-square differentiable, with mean square gradient $\DD f^{1/2}$; 
{\it (iii)} letting
\begin{equation}\label{bvpf}
\bvp_{f}(\x) := ({\varphi}_1(\x), \ldots , {\varphi}_d(\x))^\prime := -2 (\DD f^{1/2})/f^{1/2},
\end{equation}
 $\int {\varphi}_i^4 (\x) f(\x) d\mu < \infty,\ i = 1, \ldots , d$;
{\it (iv)} the score function $\bvp_f$ is piecewise Lipschitz, i.e., there exists a finite measurable partition of $\R^d$ into $J$ non-overlapping subsets $I_j$, $j = 1, \ldots , J$ and a constant $K<\infty$ such that~$\Vert \bvp_f(\x) - \bvp_f(\y)\Vert  \leq K \Vert \x - \y\Vert $ for all $\x, \y$ in $I_j$, $j = 1, \ldots , J$.\medskip

Assumption (A2)(i) requires the existence of the second moment of the innovations (a necessary condition for finite VARMA  Fisher information). 
(A2)(ii) is a multivariate version
of the classical one-dimensional quadratic mean differentiability assumption on $f^{1/2}$.  
Together,  (A2)(i) and (A2)(iii) imply the existence and finiteness of the   Fisher information matrix for location 
$\mathbfcal{I}(f) = \int \bvp_f(\x) \bvp_f^\prime(\x) f(\x) d\mu$ appearing in Proposition 2.1 below. 
See Garel and Hallin (1995) for further discussion. \color{black}

Let $\ZZ_1^{(n)}(\bth),\ldots,\ZZ_n^{(n)}(\bth)$ denote the residuals computed  from the initial values $\bepsilon_{-q+1}, \ldots , \bepsilon_0$ and~$\X_{-p + 1}, \ldots , \X_{0}$, the parameter value $\bth$, and the observations $\X^{(n)}:=\big(\X_{1}, \ldots , \X_{n}\big)$; those residuals can be computed recursively, or from \eqref{Zt.recursive}.  
  Clearly, $\X^{(n)}$ is the finite realization of a solution of~\eqref{VARMA_mod} with parameter value $\bth$  iff $\ZZ^{(n)}_1(\bth), \ldots , \ZZ^{(n)}_n(\bth)$ and~$\bepsilon_1, \ldots , \bepsilon _n$ coincide. Denoting by ${\rm P}^{(n)}_{\bth ;f}$ the distribution of $\X^{(n)}$ under parameter value $\bth$ and innovation density~$f$, the residuals $\ZZ_1^{(n)}(\bth),\ldots,\ZZ_n^{(n)}(\bth)$ under ${\rm P}^{(n)}_{\bth ;f}$ are i.i.d.\ with density $f$. \vspace{-1mm}

\subsection{LAN}\label{secLAN}

Writing~$L^{(n)}_{\bth + n^{-1/2}\btau^{(n)}/\bth; f}:=\log {\rm dP}\n_{\bth + n^{-1/2}\btau^{(n)};f}/{\rm dP}\n_{\bth;f}$ for the log-likelihood ratio of ${\rm P}^{(n)}_{\bth + n^{-1/2}\btau^{(n)};f}$
  with respect to~${\rm P}^{(n)}_{\bth;f}$, where $\btau^{(n)} $ is a bounded sequence of $\mathbb{R}^{(p+q)d^2}$,   
let \vspace{-1mm}
\begin{equation}\label{Delta}
\bDelta^{(n)}_{f} (\bth) := \M_{\bth}^\prime  \P_{\bth}^\prime \Q_{\bth}^{(n)\prime} \bGamma_{f}^{(n)}(\bth),
\vspace{-1mm}\end{equation}
where $\M_{\bth}$, $ \P_{\bth}$, and $\Q^{(n)}_{\bth}$  (see \eqref{defM} and \eqref{defPQ} in Appendix~A for an explicit form)  do not depend on $f$ nor $\btau^{(n)} $ and 
\begin{equation}\label{Sf1}
\bGamma_{f}^{(n)}(\bth) := \big((n-1)^{1/2} (\text{vec}{\bGamma_{1, f}^{(n)}(\bth)})^\prime, \ldots , (n-i)^{1/2} (\text{vec}{\bGamma_{i, f}^{(n)}(\bth)})^\prime, \ldots , (\text{vec}{\bGamma_{n-1, f}^{(n)}(\bth)})^\prime\big)^\prime
\end{equation}
with the so-called  $f$-{\it cross-covariance matrices}\vspace{-1mm}
\begin{equation}\label{Gamma1}
\bGamma_{i, f}^{(n)}(\bth) := (n-i)^{-1} \sum_{t=i+1}^n \bvp_{f}(\ZZ_t^{(n)}(\bth)) \ZZ_{t-i}^{(n)\prime} (\bth).
\end{equation}
We then have the following LAN result (see Appendix B for a proof).
\begin{proposition}\label{Prop.LAN1}
Let Assumptions (A1) and (A2) hold. Then, for any bounded sequence~$\btau^{(n)}$ in~$\R^{(p+q)d^2}$, under ${\rm P}^{(n)}_{\bth;f}$, as $n \rightarrow \infty$,\vspace{-1mm} 
\begin{equation}\label{lik1}
L^{(n)}_{\bth + n^{-1/2}\btau^{(n)}/\bth; f} = \btau^{{(n)}\prime} \bDelta^{(n)}_{f}(\bth) - \frac{1}{2} \btau^{(n)\prime} \bLam_{f} (\bth) \btau^{(n)} + o_{\rm P}(1)\vspace{-1mm}
\end{equation}
with 
$$\bLam_{f} (\bth) := \M_{\bth}^\prime  \P_{\bth}^\prime \underset{n\rightarrow \infty}{\lim} \left\lbrace \Q_{\bth}^{(n)\prime} \left[\I_{n-1} \otimes \bXi \otimes \mathbfcal{I}(f)\right] \Q_{\bth}^{(n)} \right\rbrace  \P_{\bth} \M_{\bth},$$ 
and
$\bDelta^{(n)}_{f} (\bth) $ is asymptotically normal, with mean $\0$ and variance $ \bLam_{f} (\bth))$.
\end{proposition}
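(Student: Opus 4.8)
The plan is to reduce the log-likelihood ratio to a sum over the residuals and then apply a Le Cam quadratic-mean-differentiability (QMD) expansion. Because the VARMA operators in \eqref{VARMA_mod} are normalized with $\A(0)=\B(0)=\I_d$, the transformation from innovations to observations has unit Jacobian; hence, writing $\bth_n:=\bth+n^{-1/2}\btau^{(n)}$, one has under ${\rm P}^{(n)}_{\bth;f}$ that $L^{(n)}_{\bth+n^{-1/2}\btau^{(n)}/\bth;f}=\sum_{t=1}^{n}\log\big(f(\ZZ_t^{(n)}(\bth_n))/f(\ZZ_t^{(n)}(\bth))\big)+o_{\rm P}(1)$, the $o_{\rm P}(1)$ absorbing the effect of the fixed initial values $\bepsilon_{-q+1},\ldots,\bepsilon_0,\X_{-p+1},\ldots,\X_0$. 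That effect is asymptotically negligible because Assumption~(A1) forces the coefficients of the inverse operator $\B(L)^{-1}\A(L)$ to decay geometrically, so the initial segment influences each residual only through an exponentially small term. The crucial structural fact, already noted before the statement, is that under ${\rm P}^{(n)}_{\bth;f}$ the residuals $\ZZ_t^{(n)}(\bth)$ are i.i.d.\ with density $f$, which is what allows the one-dimensional QMD machinery to be transported to this dependent-data setting.

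Next I would linearize the residual perturbation. Differentiating the recursion defining $\ZZ_t^{(n)}(\cdot)$ shows that $\mbf{\delta}_t^{(n)}:=\ZZ_t^{(n)}(\bth_n)-\ZZ_t^{(n)}(\bth)$ is, up to $o_{\rm P}$-terms, an order-$n^{-1/2}$ linear combination of the past residuals $\ZZ_{t-i}^{(n)}(\bth)$, the coefficients being determined by $\btau^{(n)}$ together with the inverse VARMA operator; the matrices $\M_{\bth}$, $\P_{\bth}$, $\Q^{(n)}_{\bth}$ of \eqref{Delta} are precisely the bookkeeping of this linear map from the parameter perturbation to the residual perturbation. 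Applying (A2)(ii) at each $\ZZ_t^{(n)}(\bth)$ with increment $\mbf{\delta}_t^{(n)}$ and running the standard Le Cam argument (with $2\log(1+W_t)=2W_t-W_t^2+o(W_t^2)$, the deterministic part of the first-order term combining with the $W_t^2$-sum) yields the target expansion
\begin{equation}\nn
L^{(n)}_{\bth+n^{-1/2}\btau^{(n)}/\bth;f}
=\sum_{t}\mbf{\delta}_t^{(n)\prime}\bvp_f(\ZZ_t^{(n)}(\bth))
-\tfrac{1}{2}\sum_{t}\mbf{\delta}_t^{(n)\prime}\,\mathbfcal{I}(f)\,\mbf{\delta}_t^{(n)}+o_{\rm P}(1).
\end{equation}
The first sum, re-summed over $t$ and the lag index through the definitions \eqref{Gamma1}, \eqref{Sf1}, \eqref{Delta}, collapses into $\btau^{(n)\prime}\bDelta^{(n)}_f(\bth)$ (the signs being absorbed by $\M_{\bth},\P_{\bth},\Q^{(n)}_{\bth}$). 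The second sum, a functional of the stationary ergodic residual sequence, converges in probability by the ergodic theorem to $\tfrac12\btau^{(n)\prime}\bLam_f(\bth)\btau^{(n)}$, using $\E[\ZZ_{t-i}\ZZ_{t-i}^\prime]=\bXi$ and the independence of $\bvp_f(\ZZ_t^{(n)})$ from the lagged residuals; this is where the Kronecker factor $\I_{n-1}\otimes\bXi\otimes\mathbfcal{I}(f)$ in $\bLam_f(\bth)$ originates.

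For the asymptotic normality of $\bDelta^{(n)}_f(\bth)$ I would invoke a multivariate martingale central limit theorem. Under ${\rm P}^{(n)}_{\bth;f}$ the score has mean $\0$ (a standard consequence of QMD together with (A2)(iii)) and $\bvp_f(\ZZ_t^{(n)}(\bth))$ is independent of the $\sigma$-field generated by $\ZZ_{t-1}^{(n)},\ZZ_{t-2}^{(n)},\ldots$; hence each summand $\bvp_f(\ZZ_t^{(n)})\ZZ_{t-i}^{(n)\prime}$ of $\bGamma^{(n)}_{i,f}(\bth)$ is a martingale-difference array. The conditional-variance condition follows from $\E[\ZZ_{t-i}\ZZ_{t-i}^\prime]=\bXi$ and $\E[\bvp_f\bvp_f^\prime]=\mathbfcal{I}(f)$, giving the per-lag variance $\bXi\otimes\mathbfcal{I}(f)$ and cross-lag orthogonality, while the Lindeberg condition is supplied by the finite fourth moment of the score (A2)(iii) and the finite second moment of the innovations (A2)(i). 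Transporting the limit through $\Q^{(n)}_{\bth},\P_{\bth},\M_{\bth}$ yields mean $\0$ and variance $\bLam_f(\bth)$, matching the quadratic term above.

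The hard part is the control of the QMD remainder. Unlike the textbook i.i.d.\ location problem, here the increment $\mbf{\delta}_t^{(n)}$ is \emph{endogenous} and involves the unbounded lagged residuals $\ZZ_{t-i}^{(n)}(\bth)$, so the pointwise $L^2$-bound of (A2)(ii) does not by itself sum to $o_{\rm P}(1)$. I would handle this by truncation: split each summand according to whether $\Vert\ZZ_{t-i}^{(n)}\Vert$ is large, use the geometric decay of the inverse-operator coefficients from (A1) to bound the tail of the lag sum, and use the piecewise-Lipschitz property (A2)(iv) of $\bvp_f$ to turn the $L^2$ differentiability into a uniform linearization bound on each cell $I_j$ of the partition. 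The finite fourth moment (A2)(iii) then guarantees that the sum of squared remainders is uniformly integrable and vanishes, closing the expansion. This is exactly the step for which Assumptions (A2)(iii)--(iv) are imposed, and where the arguments of Garel and Hallin~(1995) and Hallin and Paindaveine~(2004) furnish the template to be adapted.
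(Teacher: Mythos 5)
Your outline is correct and follows essentially the same route as the paper: the paper's own proof of Proposition~\ref{Prop.LAN1} is a two-line citation to Garel and Hallin (1995, Proposition 3.1 (LAN~2)) and Hallin and Paindaveine (2004, Proposition 1), and the argument you sketch --- unit Jacobian, linearization of the residual map into lagged residuals via the inverse operators, Le Cam's quadratic expansion with the QMD remainder controlled through truncation, the geometric decay of the Green's matrices under (A1), and the piecewise-Lipschitz score, plus a martingale CLT for the central sequence --- is precisely the content of those cited proofs (your covariance computation corresponding to their Lemma 4.12). The only caveat is that the remainder-control step is stated as a plan rather than executed, but you have correctly identified it as the one genuinely delicate point and the place where Assumptions (A2)(iii)--(iv) enter.
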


The class   ${\cal F}_d$ contains, among others,  the elliptical densities.  
Recall that a $d$-dimensional random vector $\ZZ$ has centered elliptical distribution with  {\it scatter matrix} $\bSigma$ and {\it radial density} ${\mathfrak f}$ if its density has the form 
 $ 
f({\bf z}) = \kappa^{-1}_{d,{\mathfrak f}}\big(\text{det}{\boldsymbol \Sigma}\big)^{-1/2}{\mathfrak f}\big( ({\bf z}^\prime{\boldsymbol \Sigma}^{-1}{\bf z})^{1/2}
\big)
$  
for some symmetric positive definite~$\boldsymbol \Sigma$ and some    function
~$\mathfrak f$:  $\mathbb{R}^+\to\mathbb{R}^+$ 
 such that  
 $\int_0^\infty r^{d-1}{\mathfrak f}(r)\,{\mathrm d}r<~\!\infty$;  \linebreak  $\kappa_{d,{\mathfrak f}}:= \big({2\pi^{d/2}}/{\Gamma (d/2)}\big)\int_0^\infty r^{d-1}{\mathfrak f}(r)\,{\mathrm d}r$ is a norming constant. When $\ZZ$ is elliptical with  shape matrix~$\boldsymbol \Sigma$ and radial density~$\mathfrak f$, 
 $\Vert\boldsymbol \Sigma^{-1/2}{\ZZ} \Vert$ (where $\boldsymbol \Sigma^{1/2}$ stands for the symmetric root of $\boldsymbol \Sigma$) has density 
 ${f}^\star_{ d;{\mathfrak f}}(r)=(\mu_{d-1;{\mathfrak{f}}})^{-1}r^{d-1}\mathfrak{f}(r)I[r>0]$ 
  where~$\mu_{d-1;{\mathfrak{f}}}\!:=\!\int_0^\infty r^{d-1}\mathfrak{f}(r){\rm d}\, r$, 
 and   distribution function $F^\star_{d;{\mathfrak f}}$.   Assumption~(A2){(ii)}  on $f$ then is equivalent to   the  mean square differentiability, with quadratic mean derivative ${\rm D}{\mathfrak f}^{1/2}$,  of~$r\mapsto{\mathfrak f}^{1/2}(r)$, $r\in\R_0^+$; 
 letting $\varphi_{\mathfrak f}\!:=\!-2{\rm D}{\mathfrak f}^{1/2}/ {\mathfrak f}^{1/2}(r)
 $, we  get
 $
{\cal I}_{d;{\mathfrak f}}:=
 \int_0^1\!\Big(\varphi_{\mathfrak f} \circ \big( F^\star_{d;{\mathfrak f}}\big)^{-1}\!(u) \Big) ^2{\rm d}u <~\!\infty.$

Elliptic random vectors admit the following representation in terms of spherical uniform variables. Denoting by  ${\mathbb S}_d$ and $\mathcal{S}_{d-1}$   the open unit ball
and the unit sphere in~$\mathbb{R}^d$, respectively, define the spherical uniform distribution ${\rm U}_d$ over ${\mathbb S}_d$ as the product of the uniform measure over~$\mathcal{S}_{d-1}$ 
with a uniform measure over the unit interval of distances to the origin. 
 A  $d$-dimensional random vector $\ZZ$ has centered elliptical distribution   
 iff~$F^\star_{d;{\mathfrak f}}(\Vert \bSigma^{-1/2}\ZZ\Vert) \bSigma^{-1/2} \ZZ / \Vert \bSigma^{-1/2}\ZZ\Vert\sim {\rm U}_d$.  Putting  
 $\S_{\bSigma,t}^{(n)} := \bSigma^{-1/2} \ZZ^{(n)}_t/\Vert \bSigma^{-1/2}\ZZ^{(n)}_t\Vert$, where   $\ZZ^{(n)}_t:=\ZZ^{(n)}_t(\bth)$,  it follows from Hallin and Paindaveine~(2004) that
the central sequence \eqref{Delta} for elliptical $f$ considerably simplifies and  takes the form \eqref{Delta} with\vspace{-1mm}
\begin{equation}\label{Gamma2}
\bGamma_{i, f}^{(n)}(\bth) := (n-i)^{-1} \bSigma^{-1/2}\sum_{t=i+1}^n
 {\varphi}_{1}(\Vert \bSigma^{-1/2}\ZZ^{(n)}_t\Vert) 
 {\varphi}_{2}(\Vert \bSigma^{-1/2}\ZZ^{(n)}_{t-i}\Vert)\S_{\bSigma,t}^{(n)}\S^{(n)\prime}_{\bSigma,t-i}\bSigma^{1/2}
\vspace{-1mm}\end{equation}
where  ${\varphi}_{1}(r):={\varphi}_{\mathfrak f}(r)$ and ${\varphi}_{2}(r):=r$, $r\in\mathbb{R}^+$.  

\section{Center-outward ranks and signs}\label{secranks}

Parametrically optimal (in the H\' ajek-Le Cam asymptotic sense) rank-based inference procedures in LAN families is possible if the   LAN central sequence can be expressed in terms of signs and ranks. 
 In Section \ref{SecSR}, we explain how to achieve this goal using the notions of multivariate ranks and signs proposed by Chernozhukov et al.~(2017) (under the name of Monge-Kantorovich ranks and signs) and developed in Hallin~(2017) and Hallin et al.~(2020a)  
 under the name of {\it center-outward ranks and signs}. 
 This new concepts  hinge on measure transportation theory;  their empirical versions are  based on  an optimal  
 coupling of the sample residuals ${\bf Z}\n_t$ with a regular grid over the unit ball. 

\subsection{Mapping the residuals to the unit ball}
Let $\mathcal{P}_d$ denote the family of all distributions~$\rm P$ with densities in ${\mathcal F}_d$---for this family  the center-outward distribution functions defined below  are continuous; see Hallin et al.~(2020a). The {\it center-outward distribution function}~$\F_{{\pms}}$ is defined as the a.e.~unique gradient of convex function mapping $\R^d$ to $\mathbb{S}_{d}$  and  pushing~$\rm P$ forward to    ${\rm U}_d$. For ${\rm P}\in{\mathcal P}_d$, such mapping is  a homeomorphism between ${\mathbb S}_d\setminus\{{\bf 0}\}$ and $\mathbb{R}^d\setminus \F_{{\pms}}^{-1}(\{{\bf 0}\})$ (Figalli~2018) and   the corresponding {\it center-outward quantile function} is defined  (letting, with a small abuse of notation,~$\Q_{\pms} ({\bf 0}) := \F_{\pms}^{-1}(\{{\bf 0}\})$) as~$\Q_{\pms} := \F_{\pms}^{-1}$. For any given distribution $\rm P$, $\Q_{{\pms}}$ induces a collection of continuous, connected, and nested quantile  contours and regions;   the {\it center-outward median}  $\Q_{\pms}(\0)$ is a uniquely defined  compact set of Lebesgue measure zero. We refer to Hallin et al.~(2020a) for details. 

Turning to the sample, for any  $\bth \in \boldsymbol{\Theta}$, the residuals 
$
\ZZ^{(n)}(\bth)\! :=\! (\ZZ_1^{(n)}(\bth),\ldots , \ZZ_n^{(n)}(\bth))
$ 
under ${\rm P}^{(n)}_{\bth;f}$
  are i.i.d.\ with density $f\in{\cal F}_d$ and center-outward 
distribution function $\F_{\pms}$.  
 For the empirical counterpart~$\F^{(n)}_{\pms}$ of $\F_{\pms}$,   let $n$ factorize into $n=n_R n_S +n_0,$ for $n_R, n_S, n_0 \in \mathbb{N}$ and~$0\leq n_0 < \min \{ n_R, n_S \}$, where $n_R \rightarrow \infty$ and $n_S \rightarrow \infty$ as $n \rightarrow \infty$, and consider a sequence of grids, where each grid
consists of the intersection between an $n_S$-tuple $(\boldsymbol{u}_1,\ldots \boldsymbol{u}_{n_S})$ of unit vectors, and the $n_R$-hyperspheres centered at the origin, with radii $1/(n_R+1),\ldots ,n_R/(n_R+1)$, along with~$n_0$ copies of the origin. The resulting grid  is such that the discrete distribution with probability mass~$1/n$ at each gridpoint and probability mass $n_0/n$ at the origin converges weakly to the uniform ${\rm U}_d$ over the ball $\mathbb{S}_d$. Then, we  define $\F_{{\pms}}^{(n)}(\ZZ^{(n)}_t) $, for $t = 1,\ldots ,n$ as the solution (optimal mapping) of a coupling problem between the residuals 
and the grid. 

Specifically, the empirical center-outward distribution function is the (random) mapping $$
\F^{(n)}_{{\pms}}: \ZZ^{(n)}:= (\ZZ^{(n)}_1,\ldots ,\ZZ^{(n)}_n) \mapsto (\F^{(n)}_{{\pms}}(\ZZ^{(n)}_1),\ldots ,\F^{(n)}_{{\pms}}(\ZZ^{(n)}_n))
$$
satisfying 
\begin{equation}\label{Fpm0}
\s \Vert \ZZ^{(n)}_t - \F^{(n)}_{\pms}(\ZZ^{(n)}_t)\Vert ^2 = \underset{T \in \mathcal{T}}{\min} \s \Vert \ZZ^{(n)}_t - T(\ZZ^{(n)}_t)\Vert ^2,
\end{equation}
where $\ZZ^{(n)}_t=\ZZ^{(n)}_t(\bth)$, 
the set $\{\F^{(n)}_{{\pms}}(\ZZ^{(n)}_t)\vert t = 1,\ldots ,n\}$ coincides with the~$n$ points of the grid,
and $\mathcal{T}$ denotes the set of all possible bijective mappings between $\ZZ^{(n)}_1, \ldots , \ZZ^{(n)}_n$ and the~$n$ grid points.  The sample counterpart of $\Q_{\pms}$ then is defined as  $\Q^{(n)}_{\pms} := (\F_{{\pms}}^{(n)})^{-1}$  (again, 
with the small abuse of notation~$\Q^{(n)}_{\pms} ({\bf 0}) := (\F_{{\pms}}^{(n)})^{-1}(\{{\bf 0}\})$).  See Appendix D.1 for a graphical illustration of these concepts.

Based on this empirical center-outward distribution function,  
  the {\it center-outward ranks}  and {\it signs} are 
\begin{equation}
{R^{(n)}_{{\pms}, t}}:=  {R^{(n)}_{{\pms}, t}}(\bth):=({n_R + 1}) \Vert \F^{(n)}_{\pms}(\ZZ^{(n)}_t) \Vert,  \label{Ranks}
\end{equation}
and (for $\F^{(n)}_{\pms}(\ZZ^{(n)}_t) = \boldsymbol{0}$, let $\S^{(n)}_{{\pms}, t}:=\boldsymbol{0}$)\vspace{-2mm} 
\begin{equation}
\S^{(n)}_{{\pms}, t}:=  {\S^{(n)}_{{\pms}, t}}(\bth):=  \frac{\F^{(n)}_{\pms}(\ZZ^{(n)}_t)}{
\Vert \F^{(n)}_{\pms}(\ZZ^{(n)}_t) \Vert} I [\F^{(n)}_{\pms}(\ZZ^{(n)}_t) \neq \boldsymbol{0}], \label{Signs}\vspace{-2mm}
\end{equation}
respectively.   It follows that $\F^{(n)}_{\pms}(\ZZ_t^{(n)})$ factorizes into
\begin{equation}\label{factFpm}
\F^{(n)}_{\pms}(\ZZ_t^{(n)})=   \frac{R^{(n)}_{{\pms}, t}}{n_R + 1}  \S^{(n)}_{{\pms}, t},\quad\text{ hence }\quad \ZZ_t^{(n)}= \Q^{(n)}_{\pms}\Big(
 \frac{R^{(n)}_{{\pms}, t}}{n_R + 1}  \S^{(n)}_{{\pms}, t}\Big).
\end{equation}


Conditional on the grid (in case the latter is random), those ranks and signs are jointly distribution-free: more precisely,  under~${\rm P}^{(n)}_{\bth;f}$,  the $n$-tuple $\F^{(n)}_{\pms}(\ZZ_1^{(n)}),\ldots , \F^{(n)}_{\pms}(\ZZ_n^{(n)})$ is uniformly distributed over the $n!$ permutations\footnote{Actually, for $n_0>1$, the $n!/n_0!$ permutations with repetitions.} of the~$n$  gridpoints, irrespective of~$f\in{\cal F}_d$. 
 
We refer to Sections~3.1 and~6 in Hallin et al.~(2020a) for further details,  comments on the main properties of  $\F^{(n)}_{\pms}$ and $\F_{\pms}$, and for remarks on the sufficiency and maximal ancillarity of the sub-$\sigma$-fields generated by the order statistic\footnote{An {\it order statistic} $\ZZ_{(\cdot)}^{(n)}$ of the un-ordered $n$-tuple $\ZZ^{(n)}$ is an arbitrarily ordered version of the same; see Appendix D and Hallin et al.~(2020a).}   and by the center-outward ranks and signs, in the fixed-$\bth$ experiment $\{{\rm P}\n_{\bth ;f}\vert f\in{\mathcal F}_d\}$. 
\subsection{Mahalanobis ranks and signs}\label{Mahalsec}
Definitions  (\ref{Ranks}) and (\ref{Signs}) call for a comparison with the earlier concepts of {\it elliptical} or {\it Mahalanobis ranks and signs} introduced in Hallin and Paindaveine~(2002a and b, 2004), which we now describe.  
Associated with the centered elliptical distribution with scatter $\bSigma$ and radial density $\mathfrak f$,   consider  
the mapping 
$\bz\mapsto {\F}_{\text{\tiny\!{\rm ell}}}(\bz):=\Fstar(\Vert\bSigma^{-1/2}\bz\Vert) \bSigma^{-1/2} \bz / \Vert\bSigma^{-1/2}\bz\Vert$ 
from~$\mathbb{R}^d$ to~${\mathbb S}_d$. In measure transportation  parlance, ${\F}_{\text{\tiny\!{\rm ell}}}$, just as  ${\F}_{\pms}$, 
{\it pushes}  the elliptical distribution of~$\ZZ$ {\it forward to} the uniform ${\rm U}_d$ over the unit ball~${\mathbb S}_d$. 
This allows us to connect the Mahalanobis ranks and signs to the center-outward ones. 

Denoting by $\widehat{\bSigma}^{(n)}$ a consistent estimator of $\bSigma$ measurable with respect to the order statistic\footnote{That is, a symmetric function of the $\ZZ_t$'s.} of the $\ZZ\n_t$'s and by $F^{\star (n)}$ the empirical distribution function 
of the moduli~$\big\Vert \big(\widehat{\bSigma}^{(n)}\big)^{-1/2}\ZZ\n_t\big\Vert$, an empirical counterpart of ${\F}_{\text{\tiny\!{\rm ell}}}(\ZZ\n_t)$ is \vspace{-2mm}
\begin{equation}\label{hatmapell}
 {\F}^{(n)}_{\text{\tiny\!{\rm ell}}}(\ZZ\n _t)
 :=F^{\star (n)}\big(\big\Vert \big(\widehat{\bSigma}^{(n)}\big)^{-1/2}\ZZ\n_t\big\Vert\big) 
  \frac{\big(\widehat{\bSigma}^{(n)}\big)^{-1/2} \ZZ\n_t }{\big\Vert \big(\widehat{\bSigma}^{(n)}\big)^{-1/2}\ZZ\n_t\big\Vert}
\vspace{-2mm}\end{equation}
with the 	 {\it  Mahalanobis ranks}	 (compare to \eqref{Ranks})												\begin{equation}
R^{(n)}_{\text{ell}, t}:= R^{(n)}_{\text{ell}, t}(\bth):= (n+1)\Vert  {\F}\n_{\text{\tiny\!{\rm ell}}}(\ZZ\n _t)\Vert=(n+1)F^{\star (n)}\big(\big\Vert \big(\widehat{\bSigma}^{(n)}\big)^{-1/2}\ZZ\n_t\big\Vert\big) 
\label{MalRanks}
\end{equation}
and {\it  Mahalanobis signs}  (compare to \eqref{Signs})
\begin{equation}
\S^{(n)}_{\text{ell}, t}:=\S^{(n)}_{\text{ell}, t}(\bth)
:= \frac{\F^{(n)}_{\text{\tiny\!{\rm ell}}}(\ZZ^{(n)}_t)  }{
\Vert \F^{(n)}_{\text{\tiny\!{\rm ell}}}(\ZZ^{(n)}_t) \Vert }I [\F		^{(n)}_{\text{\tiny\!{\rm ell}}}(\ZZ^{(n)}_t) \neq \boldsymbol{0}]=\frac{\big(\widehat{\bSigma}^{(n)}\big)^{-1/2} \ZZ\n_t}{\big\Vert \big(\widehat{\bSigma}^{(n)}\big)^{-1/2}\ZZ\n_t\big\Vert}
I \big[\big(\widehat{\bSigma}^{(n)}\big)^{-1/2} \ZZ\n_t \neq \boldsymbol{0}\big] \label{MalSigns}
\end{equation}
(for $\F^{(n)}_{\text{\tiny\!{\rm ell}}}(\ZZ^{(n)}_t)  = \boldsymbol{0}=\big(\widehat{\bSigma}^{(n)}\big)^{-1/2} \ZZ\n_t $, let $\S^{(n)}_{\text{ell}, t}:=\boldsymbol{0}$). Similar to \eqref {factFpm}), we have 
\begin{equation}\nonumber
\F^{(n)}_{\text{\tiny\!{\rm ell}}}(\ZZ_t^{(n)})=   \frac{R^{(n)}_{\text{ell},t}}{n+1}  \S^{(n)}_{\text{ell}, t},\quad\!\!\text{hence}\!\!\quad \widehat{\bSigma}^{(n)-1/2} \ZZ_t^{(n)}\!=  (F^{\star (n)})^{-1}\!\left(\frac{R^{(n)}_{\text{ell},t}}{n+1} \right)\S^{(n)}_{\text{ell}, t}= {\bSigma}^{-1/2} \ZZ_t^{(n)} + o_{\mathrm P}(1). \vspace{-1mm}
\end{equation}
\subsection{Elliptical ${\F}_{\text{\tiny\!{\rm ell}}}$, center-outward ${\F}_{\pms}$, and affine invariance}\label{affinvsec}

Both ${\F}_{\text{\tiny\!{\rm ell}}}$ and  ${\F}_{\pms}$ are 
{\it pushing}  the elliptical distribution of~$\ZZ$ {\it forward to} ${\rm U}_d$. However, unless~$\boldsymbol \Sigma$ is proportional to 
identity ($\bSigma = c\I_d$ for some $c>0$), ${\F}_{\text{\tiny\!{\rm ell}}}$ and  ${\F}_{\pms}$ are distinct, so that~${\F}_{\text{\tiny\!\!
{\rm ell}}}$ cannot be the gradient of a convex function. Moreover,
both ${\F}_{\text{\tiny\!{\rm ell}}}$ and  ${\F}_{\pms}$ {\it sphericize} the distribution of~$\ZZ$.   Some key differences are worth to be mentioned, though.

First, while   sphericization and probability integral transformation, in ${\F}_{\pms}$, are inseparably combined,  ${\F}_{\text{\tiny\!{\rm ell}}}$ proceeds in two separate steps: a  {\it Mahalanobis sphericization} step (the parametric affine transformation~$\bz \mapsto \bz_{\bSigma,\boldsymbol\mu}:={\bSigma}^{-1/2} (\bz-\boldsymbol\mu$)) 
 first, followed by the spherical probability integral trans\-formation~$\bz_{{\bSigma,\boldsymbol\mu}}\mapsto F^\star_{d;{\mathfrak f}}(\Vert\bz_{{\bSigma,\boldsymbol\mu}}\Vert)\bz_{{\bSigma,\boldsymbol\mu}}/\Vert\bz_{{\bSigma,\boldsymbol\mu}}\Vert$. 

Second, Mahalanobis sphericization requires  centering, hence the definition of a location parameter $\boldsymbol\mu$. Distinct choices of location (mean, spatial median, etc.) all yield the same result under ellipticity,  but not under non-elliptical distributions. This is in sharp contrast with ${\F}_{\pms}$, which is location-invariant (see Hallin et al.~(2020b)). Similarly, all definitions and sensible estimators of the scatter yield the same results under
elliptical symmetry but not under non-elliptical
distributions.

Third, even under additional  assumptions ensuring the identification of $\boldsymbol\Sigma$, the Mahalanobis sphericization, hence also ${\F}_{\text{\tiny\!{\rm ell}}}$,   only sphericizes elliptical distributions, 
whilst ${\F}_{\pms}$ sphericizes them all. 
Its  preliminary Mahalanobis sphericization step actually makes ${\F}_{\text{\tiny\!{\rm ell}}}$ affine-invariant. Assuming that sensible choices of $\boldsymbol\mu$ and $\boldsymbol\Sigma$ are available,  performing the same Mahalanobis transformation prior to determining ${\F}_{\pms}$ similarly would make center-outward distribution functions affine-invariant and the corresponding center-outward quantile functions affine-equivariant (in fact, for elliptical distributions, the resulting~${\F}_{\pms}$  then coincides with~${\F}_{\text{\tiny\!{\rm ell}}}$).  Whether this is desirable is a matter of choice. While affine-invariance, in view of the central role of the affine group in elliptical families,  is quite natural under elliptical symmetry, its relevance  is much less  obvious away from ellipticity. A more detailed discussion of this fact, along with additional arguments related to the lack of affine invariance of  non-elliptical local experiments, can be found in Hallin et al.~(2020b).

%


Besides affine invariance issues, center-outward distribution functions, ranks, and signs inherit, from the invariance properties of  Euclidean distances, elementary but remarkable invariance and equivariance properties: as shown in~Hallin et al. (2020b)  they enjoy invariance/equivariance with respect to shift,  global scale factors, and orthogonal transformations.

\subsection{A center-outward sign- and rank-based central sequence} \label{SecSR}

Efficient estimation in  LAN experiments is based on  central sequences and the so-called {\it Le Cam one-step method}. Our R-estimation is based on the same principles. Specifically, in the central sequence associated with some reference density $f$, we replace the residuals $\ZZ(\bth)$  with some adequate function of their ranks and their signs. Then, from the resulting rank-based statistic, we implement  a suitable adaptation of the one-step method. If, under innovation density $f$, the substitution yields a genuinely rank-based, hence distribution-free, version of the central sequence, 
the resulting R-estimator achieves parametric efficiency under $f$ while remaining valid under other innovation densities; see Hallin and Werker~(2003)  for a discussion.


In dimension $d=1$, 
Allal et al.~(2001), Hallin and La Vecchia (2017, 2019 and references therein) explain how  to construct R-estimators for linear and nonlinear semiparametric time series models. 
In dimension $d>1$, under elliptical innovations density,  Hallin et al.~(2006) exploit similar ideas for the estimation of  shape matrices,  based on the  Mahalanobis ranks and signs.  Hallin and Paindaveine~(2004), in a hypothesis testing context, show that replacing~${\bf Z}\n_t$ in \eqref{Gamma2} with  
\begin{equation}\label{ellr}  
\widehat{\bSigma}^{(n)1/2} 
F^{\star -1}_{d;{\mathfrak f}}
(R^{(n)}_{\text{ell},t}/(n+1)) \S^{(n)}_{\text{ell},t}= \F_{\text{\tiny\!{\rm ell}}}^{-1}((R^{(n)}_{\text{ell},t}/(n+1))\S^{(n)}_{\text{ell},t}) = \F_{\text{\tiny\!{\rm ell}}}^{-1}(\F^{(n)}_{\text{\tiny\!{\rm ell}}}(\ZZ_t^{(n)}(\bth))) 
\end{equation}
(where $R^{(n)}_{\text{ell},t}=R^{(n)}_{\text{ell},t}(\bth)$,  $\S^{(n)}_{\text{ell},t}=\S^{(n)}_{\text{ell},t}(\bth)$, and  $\widehat{\bSigma}^{(n)}$ is a suitable estimator  of the scatter matrix) yields a rank-based version of the central sequence associated with the elliptic density~$f$---namely, a random vector $\tenq\bDelta\n_f(\bth)$   measurable with respect to the Mahalanobis ranks and signs (hence, distribution-free under ellipticity)  such that, under $f$,~$\tenq\bDelta\n_f(\bth) - \bDelta\n_f(\bth) $ is~$ o_{\rm P}(1)$ as~$n\to\infty$.

However, this construction is valid only for the family of elliptical innovation densities (in dimension one, the family of symmetric innovation densities), under which Mahalanobis ranks and signs are distribution-free. This is a severe limitation, which is unlikely to be satisfied in most applications. 
If the attractive properties of R-estimators  in univariate semiparametric time series models are to be extended to dimension two and higher, center-outward ranks and signs, the distribution-freeness of which holds under  any density $f\in{\cal F}_d$, are to be considered instead of the Mahalanobis ones.

Building on this remark,  we  propose to substitute $\ZZ\n_t(\bth)$  in \eqref{Gamma1} with
\begin{equation}
\F_{\pms}^{-1}(({R^{(n)}_{{\pms}, t}}/({n_R + 1)} ) \S^{(n)}_{{\pms}, t})=
  \F_{\pms}^{-1}(\F_{\pms}^{(n)}(\ZZ_t^{(n)}(\bth)))=\Q_{\pms}\circ \F_{\pms}^{(n)}(\ZZ_t^{(n)}(\bth)),\label{Subst} 
\end{equation}
where $R^{(n)}_{\pm,t}=R^{(n)}_{\pm,t}(\bth)$,  $\S^{(n)}_{\pm,t}=\S^{(n)}_{\pm,t}(\bth)$,  and $\F_{\pms}$ and $\Q_{\pms}$ are associated with some chosen reference innovation density $f\in{\cal F}_d$. 
This yields  rank-based, hence distribution-free, $f$-cross-covariance matrices of the form ($i = 1, \ldots , n - 1$)
\begin{equation}\label{tildeGamgen}
\tenq{\bGamma}_{i, f}^{(n)}(\bth) := (n-i)^{-1} \sum_{t=i+1}^n {\boldsymbol\varphi}_f\left(\F_{\pms}^{-1}\left(\frac{R^{(n)}_{{\pms}, t}}{n_R + 1}  \S^{(n)}_{{\pms}, t}\right)\right) \F_{\pms}^{-1\prime}\left(\frac{R^{(n)}_{{\pms}, t-i}}{n_R + 1}  \S^{(n)}_{{\pms}, t-i}\right) .
\end{equation}

While this looks quite straightforward, practical implementation requires an analytical expression for $\F_{\pms}$, which typically is unavailable for general innovation densities. 
And, were such closed forms available, the problem of choosing an adequate multivariate reference density~$f$ remains.

Now, note that in the univariate case  all standard reference densities are symmetric---think of Gaussian,  logistic, double-exponential densities,   leading to van der Waerden, Wilcoxon, or sign test scores. Therefore, in the sequel, we concentrate on rank-based cross-covariance matrices of the form  ($ i = 1, \ldots , n - 1$)
\begin{equation}\label{tildeGam}
\tenq{\bGamma}_{i, {J}_1, {J}_2}^{(n)}(\bth) := (n-i)^{-1} \sum_{t=i+1}^n {J}_1\left(\frac{R^{(n)}_{{\pms}, t}}{n_R + 1}\right) {J}_2\left(\frac{R^{(n)}_{{\pms}, t-i}}{n_R + 1}\right)  \S^{(n)}_{{\pms}, t} \S^{(n)\prime}_{{\pms}, t-i}
\end{equation}
to which  $\tenq{\bGamma}_{i, f}^{(n)}(\bth)$ in \eqref{tildeGamgen} reduces, with ${J}_1(u)={\varphi}_{\mathfrak f}(F^{\star -1}_{d;{\mathfrak f}}(u))$ and~${J}_2(u)=F^{\star -1}_{d;{\mathfrak f}}(u)$,  
 in the case of a spherical reference $f$ with radial density $\mathfrak f$, yielding  a rank-based version $\tenq\bDelta\n_f$ of the spherical central sequence $\bDelta\n_f$. 
More generally, we propose to use statistics of the form~(\ref{tildeGam}) with scores~${J}_1: [0,1) \rightarrow  \mathbb{R}$ and~${J}_2: [0,1) \rightarrow  \mathbb{R}$ which are not necessarily related to any spherical density. Then, the notation $\tenq\bDelta\n_{{J}_1, {J}_2}$  will be used in an obvious fashion,  indicating that~$\tenq\bDelta\n_{{J}_1, {J}_2}$ needs not be a central sequence. 

The  next section provides details on the choice of ${J}_1$ and ${J}_2$ and establishes the asymptotic properties (root-$n$ consistency and asymptotic normality) of the related R-estimators.

\section{R-estimation} \label{Sec_Rest}

\subsection{One-step R-estimators: definition and asymptotics}\label{subsec.def}

We now proceed with a precise definition of our R-estimators and establish their asymptotic properties. Throughout,~${J}_1$ and $ {J}_2$  are assumed to satisfy the following  assumption.\smallskip

\noindent\textbf{Assumption (A3).} The score  functions ${J}_1$ and $ {J}_2$ in \eqref{tildeGam}  {\it (i)} are square-integrable, that is,~$\sigma_{{J}_l}^2:=
\int_0^1 {J}_l^2(r) {\rm d}r < \infty,\  l=1,2,$
and  {\it (ii)}  are continuous differences of two monotonic increasing functions.\medskip

Assumption~(A3) is quite mild and it is satisfied, e.g., by all square-integrable functions with bounded variation. 
Define
$\J_{{J}_2,f} := \int_{\mathbb{S}_d} {J}_2(\Vert \bu \Vert) ({\bu}/{\Vert \bu \Vert}) \F^{-1 \prime}_{\pms}(\bu)  {\rm dU}_d(\bu),$
and
\begin{align}
\K_{{J}_1, {J}_2, f} &:=   \int_{\mathbb{S}_d} 
 {J}_1(\Vert \bu \Vert)\left[\I_d \otimes  \frac{\bu}{\Vert \bu \Vert}
 \right]\J_{{J}_2,f}\left[\I_d \otimes   \bvp^\prime_{f}\Big(\F_{\pms}^{-1}(\bu)
 \Big)
 \right] {\rm dU}_d(\bu).
\end{align}
These two matrices under Assumptions (A2) and 
(A3) exist and are finite in view of the Cauchy--Schwarz inequality
 since ${\bu}/{\Vert \bu \Vert}$ is bounded.

R-estimation requires the asymptotic linearity of the rank-based objective function involved. Sufficient conditions for  such linearity are available in the literature (see e.g.  Jure\v{c}kov\' a~(1971) and van Eeden~(1972), Hallin and Puri~(1994), Hallin and Paindaveine~(2005) or Hallin et al.~(2015)).  
In the same spirit, 
 we introduce the following assumption   on the rank-based statistics~$\tenq{\bGamma}_{i, {J}_1, {J}_2}^{(n)}(\bth)$; the form of the linear term in the right-hand side of \eqref{assA4} follows from the form of the asymptotic shift in Lemma~\ref{bar.til.Gam}. 

 Decomposing the matrix $\Q^{(n)}_{\bth}$ defined in \eqref{defPQ}  into $d^2 \times d^2 (p+q)$ blocks (note that these blocks do not depend on $n$),  write  
$\Q^{(n)}_{\bth} =
\big(
\Q_{1, \bth}^\prime 
\ldots 
\Q_{n-1, \bth}^\prime
\big)^\prime$  
and consider the following assumption. 
\color{black}

\noindent\textbf{Assumption (A4)}  For any positive integer $i$ and $d^2(p+q)$-dimensional vector $\btau$, under actual density $f$, as~$n\to\infty$
\begin{equation}\label{assA4}
(n-i)^{1/2}\left[\text{vec} (\tenq{\bGamma}^{(n)}_{i, {J}_1, {J}_2}(\bth + n^{-1/2}\btau)) - \text{vec} (\tenq{\bGamma}^{(n)}_{i, {J}_1, {J}_2}(\bth)) \right] = - \K_{{J}_1, {J}_2, {f}} \Q_{i, \bth} \P_{\bth} \M_{\bth} \btau + o_{\rm P}(1), 
\end{equation}
where  
${\bf M}_{\bth}$ and ${\bf P}_{\bth}$, which do not depend on $f$, ${J}_1$ nor ${J}_2$,  are given in 
(\ref{defM}) and (\ref{defPQ}) in Appendix~\ref{AppLAN}. \medskip

Next, for $m \leq n-1$, consider   \vspace{-1mm} 
\begin{equation}\label{tilde.S.m}
\tenq{\bGamma}_{{J}_1, {J}_2}^{(m,n)}(\bth) := ((n-1)^{1/2}(\text{vec} \tenq{\bGamma}_{1, {J}_1, {J}_2}^{(n)}(\bth))^\prime, \ldots , (n-m)^{1/2}(\text{vec} \tenq{\bGamma}_{m, {J}_1, {J}_2}^{(n)}(\bth))^\prime)^\prime,
\end{equation}
and the truncated version  
\begin{equation}\label{tildeDelta.m}
\tenq{\bDelta}^{(n)}_{m, {J}_1, {J}_2}(\bth) := \T^{(m+1)}_{\bth} \tenq{\bGamma}_{{J}_1, {J}_2}^{(m,n)}(\bth)\quad\text{where}\quad \T^{(m+1)}_{\bth} := \M_{\bth}^\prime  \P_{\bth}^\prime \Q_{\bth}^{(m+1)\prime}
\end{equation}
of   $\tenq{\bDelta}^{(n)}_{{J}_1, {J}_2}(\bth)$. 
This truncation is just a theoretical device required in the statement of asymptotic results and, as  explained in Appendix C,  there is no need to implement it in practice.  The asymptotic linearity \eqref{assA4} of~$\tenq{\bGamma}_{i, {J}_1, {J}_2}^{(n)}(\bth)$ entails, for $\tenq{\bDelta}^{(n)}_{{J}_1, {J}_2}(\bth)$, the following result.

\begin{proposition}\label{asy0}
Let Assumptions (A1), (A2), (A3), and (A4) hold. Then,  for  any $(m,n)$ such that $m  \leq n-1$ and  $m\to\infty$ (hence also~$n\to\infty$), \vspace{-1mm} 
\begin{equation}\label{asy.linear2}
\tenq{\bDelta}^{(n)}_{{J}_1, {J}_2}(\bth + n^{-1/2}\btau) - \tenq{\bDelta}^{(n)}_{m, {J}_1, {J}_2}(\bth) =  -  \bUpsilon^{(m+1)}_{{J}_1, {J}_2, f}(\bth)  \btau + o_{\rm P}(1),
\end{equation}
where $\bUpsilon^{(m+1)}_{{J}_1, {J}_2, f}(\bth) :=   \T^{(m+1)}_{\bth} (\I_{m} \otimes \K_{{J}_1, {J}_2, f}) \T^{(m+1)\prime}_{\bth}.$ 
\end{proposition}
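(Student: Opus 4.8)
The plan is to propagate the lag-by-lag asymptotic linearity (A4) through the linear map that builds $\tenq{\bDelta}$ out of the cross-covariance matrices, the only genuine work being to show that truncating the lag sum at $m$ and accumulating $m$ error terms is harmless once $m\to\infty$. Writing $\Q^{(n)}_{\bth}=(\Q_{1,\bth}^\prime \ldots \Q_{n-1,\bth}^\prime)^\prime$ in blocks and unfolding the definitions \eqref{tilde.S.m}--\eqref{tildeDelta.m}, I would first record the representations
\begin{align*}
\tenq{\bDelta}^{(n)}_{{J}_1,{J}_2}(\bth) &= \M_{\bth}^\prime \P_{\bth}^\prime \sum_{i=1}^{n-1} \Q_{i,\bth}^\prime (n-i)^{1/2}\,\text{vec}\big(\tenq{\bGamma}^{(n)}_{i,{J}_1,{J}_2}(\bth)\big),\\
\tenq{\bDelta}^{(n)}_{m,{J}_1,{J}_2}(\bth) &= \M_{\bth}^\prime \P_{\bth}^\prime \sum_{i=1}^{m} \Q_{i,\bth}^\prime (n-i)^{1/2}\,\text{vec}\big(\tenq{\bGamma}^{(n)}_{i,{J}_1,{J}_2}(\bth)\big),
\end{align*}
the second obtained from the first by retaining only its leading $m$ blocks.

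Next I would split the quantity to be controlled as $A_n+B_n$, where $A_n:=\tenq{\bDelta}^{(n)}_{{J}_1,{J}_2}(\bth+n^{-1/2}\btau)-\tenq{\bDelta}^{(n)}_{m,{J}_1,{J}_2}(\bth+n^{-1/2}\btau)$ is the truncation remainder evaluated at the perturbed parameter and $B_n:=\tenq{\bDelta}^{(n)}_{m,{J}_1,{J}_2}(\bth+n^{-1/2}\btau)-\tenq{\bDelta}^{(n)}_{m,{J}_1,{J}_2}(\bth)$ is the increment of the truncated statistic. For $B_n$, I would invoke (A4) for each fixed $i=1,\ldots,m$, stack the $m$ linearizations, and premultiply by $\T^{(m+1)}_{\bth}=\M_{\bth}^\prime\P_{\bth}^\prime\Q^{(m+1)\prime}_{\bth}$. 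Since stacking $\Q_{1,\bth},\ldots,\Q_{m,\bth}$ reconstitutes $\Q^{(m+1)}_{\bth}$ and $\Q^{(m+1)}_{\bth}\P_{\bth}\M_{\bth}=\T^{(m+1)\prime}_{\bth}$, the leading term collapses to $-\T^{(m+1)}_{\bth}(\I_m\otimes\K_{{J}_1,{J}_2,f})\T^{(m+1)\prime}_{\bth}\btau=-\bUpsilon^{(m+1)}_{{J}_1,{J}_2,f}(\bth)\btau$, which is exactly the asserted drift.

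It then remains to argue $A_n=o_{\rm P}(1)$ and that the error terms accumulated in $B_n$ stay $o_{\rm P}(1)$. Both rest on the geometric decay $\Vert\Q_{i,\bth}\Vert\le C\rho^{\,i}$, $\rho\in(0,1)$, of the Green's matrices, guaranteed by the stability Assumption (A1) (the coefficients associated with $\A^{-1}(L)$ and $\B^{-1}(L)$ decay exponentially), together with a uniform second-moment bound $\sup_i \mathbb{E}\Vert (n-i)^{1/2}\text{vec}(\tenq{\bGamma}^{(n)}_{i,{J}_1,{J}_2}(\bth))\Vert^2<\infty$. The latter follows from the square-integrability (A3) of $J_1,J_2$ and the distribution-freeness of the center-outward ranks and signs, which make each normalized cross-covariance a centered, $L^2$-bounded sum. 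For $A_n$ the difference equals $\M_{\bth}^\prime\P_{\bth}^\prime\sum_{i=m+1}^{n-1}\Q_{i,\bth}^\prime (n-i)^{1/2}\text{vec}(\tenq{\bGamma}^{(n)}_{i,{J}_1,{J}_2}(\bth+n^{-1/2}\btau))$, and a Cauchy--Schwarz/Chebyshev bound combined with $\sum_{i>m}\rho^{\,i}\to0$ yields $A_n=o_{\rm P}(1)$; the cross-covariances at $\bth+n^{-1/2}\btau$ inherit the same uniform $L^2$ bound under ${\rm P}^{(n)}_{\bth;f}$ through the mutual contiguity of ${\rm P}^{(n)}_{\bth+n^{-1/2}\btau;f}$ and ${\rm P}^{(n)}_{\bth;f}$ supplied by Proposition~\ref{Prop.LAN1}.

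The main obstacle is precisely this uniform control: because the number of summed lags grows with $m$, neither the per-lag $o_{\rm P}(1)$ remainders of (A4) nor the stochastic magnitudes of the $\tenq{\bGamma}^{(n)}_i$ can be added naively. The decisive ingredient is the exponential decay of $\Q_{i,\bth}$, which dominates the tail and, in tandem with the uniform $L^2$ bound and contiguity, confines both the truncation remainder $A_n$ and the accumulated linearization errors in $B_n$ to $o_{\rm P}(1)$ despite $m\to\infty$.
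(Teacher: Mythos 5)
Your proposal is correct and follows essentially the same route as the paper: the paper's own proof is a one-liner invoking the negligibility of the truncation remainder (its display \eqref{til.Delta.mn}, established in the proof of Lemma \ref{lem.bar.til.Delta.n} precisely via the exponential decay of the Green's matrices under (A1) together with the uniform $L^2$ bound $\sigma^2_{J_1}\sigma^2_{J_2}$ on the normalized cross-covariances) combined with the stacking of the per-lag linearizations of Assumption (A4). Your write-up simply makes explicit the decomposition, the block-matrix bookkeeping that collapses the drift to $-\bUpsilon^{(m+1)}_{J_1,J_2,f}(\bth)\btau$, and the contiguity step, all of which the paper leaves implicit.
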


With the above asymptotic linearity result, we are now ready to define our R-estimators. First, let us introduce some notations. Under Assumption (A1), let $\bUpsilon_{{J}_1, {J}_2, f}(\bth)\! :=\! \underset{n \rightarrow \infty}{\lim}  \bUpsilon^{(n)}_{{J}_1, {J}_2, f}(\bth)$
and define  the {\it cross-information matrix}
\begin{equation}\label{Xinfo}
\mathbf{I}_{{J}_1, {J}_2, f}(\bth) := \underset{n \rightarrow \infty}{\lim} {\rm E}_{\bth, f} \left[ \tenq{\bDelta}^{(n)}_{{J}_1, {J}_2}(\bth)  \bDelta^{(n)}_{f}(\bth)^\prime  \right].
\end{equation}
Let
 \begin{equation}\label{barGam}
\bar{\bGamma}_{i, {J}_1, {J}_2}^{(n)}(\bth) := (n-i)^{-1} \sum_{t=i+1}^n {J}_1(\Vert  \F_{{\pms}, t} \Vert ) {J}_2(\Vert  \F_{{\pms}, t-i} \Vert )  \S_{{\pms}, t} \S^\prime_{{\pms}, t-i} 
\end{equation}
with 
$
\S_{{\pms}, t} := \F_{{\pms}, t}/\Vert  \F_{{\pms}, t} \Vert 
$ 
representing the ``sign'' of  $\F_{{\pms}, t}:=\F_{\pms}(\ZZ^{(n)}_t(\bth))$. Denote by $\bar{\bDelta}^{(n)}_{{J}_1, {J}_2}(\bth)$ the central sequence resulting from substituting  $\bar{\bGamma}_{i, {J}_1, {J}_2}^{(n)}(\bth)$ for $\tenq{\bGamma}_{i, {J}_1, {J}_2}^{(n)}(\bth)$ in $\tenq{\bDelta}^{(n)}_{{J}_1, {J}_2}(\bth)$. Following the proofs in Lemma \ref{lem.bar.til.Delta.n} and Lemma \ref{bar.til.Gam}, it is not difficult to see that the difference between~$\bar{\bDelta}^{(n)}_{{J}_1, {J}_2}$ and~$\tenq{\bDelta}^{(n)}_{{J}_1, {J}_2}$ converges to zero in quadratic mean as $n \rightarrow \infty$. 
Therefore,~$\bUpsilon_{{J}_1, {J}_2, f}(\bth)$ coincides with the cross-information matrix \eqref{Xinfo} when Assumptions (A1), (A2) and (A3) hold; see the proof of Lemmas B.1 and B.4 in Appendix B.

Let  $\hat{\bUpsilon}_{{J}_1, {J}_2}^{(n)}$ denote a consistent (under innovation density $f$) estimator of $\bUpsilon_{{J}_1, {J}_2, f}(\bth)$;   such an estimator 
 is provided in~(\ref{asy.linear2}), see Appendix \ref{secalg} for details. Also, denote by $\hat{\bth}^{(n)}$ a preliminary root-n consistent and asymptotically discrete\footnote{Asymptotic discreteness is only  a theoretical requirement since, in practice, $\hat{\bth}^{(n)}$ anyway only has a bounded number of digits; see Le Cam and Yang (2000, Chapter 6) and van der Vaart (1998, Section 5.7) for   details. } estimator of $\bth$. 
Our one-step R-estimator then is defined as 
\begin{equation}\label{onestep.def}
\tenq{\hat{\bth}}\n := \hat{\bth}^{(n)} + n^{-1/2} \left( \hat{\bUpsilon}_{{J}_1, {J}_2}^{(n)} \right)^{-1} \tenq{\bDelta}^{(n)}_{{J}_1, {J}_2} ( \hat{\bth}^{(n)} ).
\end{equation}
The following proposition  establishes its root-${n}$ consistency and asymptotic normality.

\begin{proposition}\label{asy}
Let Assumptions (A1), (A2), (A3), and (A4) hold. Let \vspace{-1mm} 
$$\bOmega^{(n)} :=  d^{-2}\sigma^2_{{J}_1}\sigma^2_{{J}_2}  \left( \bUpsilon^{(n)}_{{J}_1, {J}_2, f}(\bth) \right)^{-1}  \T^{(n)}_{\bth} \T^{(n)\prime}_{\bth} \left( \bUpsilon^{(n) \, \prime}_{{J}_1, {J}_2, f}(\bth)  \right)^{-1}.$$
Then, denoting by $\big(\bOmega^{(n)}\big)^{-1/2}$ the symmetric square root of $\bOmega^{(n)} $, 
\begin{equation}
n^{1/2}\big(\bOmega^{(n)}\big)^{-1/2} (\tenq{\hat{\bth}}\n - \bth) \rightarrow {\mathcal N}(\0, \I_{d^2(p+q)}),\vspace{-1mm} 
\end{equation}
 under innovation density $f$, as  both $n_R$ and $n_S$ tend to infinity.
 \end{proposition}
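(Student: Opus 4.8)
The statement to prove is the asymptotic normality of the one-step R-estimator $\tenq{\hat\bth}\n$ defined in \eqref{onestep.def}, with the explicit sandwich covariance $\bOmega^{(n)}$.  The plan is to combine three ingredients already assembled in the excerpt: the asymptotic linearity of $\tenq{\bDelta}^{(n)}_{{J}_1,{J}_2}$ (Proposition~\ref{asy0}), the asymptotic normality of a suitably centered version of the rank-based central sequence, and the standard Le Cam one-step algebra that converts these into a statement about the estimator.  I will work under $\mathrm{P}^{(n)}_{\bth;f}$ throughout.

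\emph{Step 1 (linearization of the estimator).}  Starting from \eqref{onestep.def}, I would write
$$
n^{1/2}(\tenq{\hat\bth}\n-\bth)
= n^{1/2}(\hat\bth^{(n)}-\bth)
+ \big(\hat\bUpsilon^{(n)}_{{J}_1,{J}_2}\big)^{-1}\,
  \tenq{\bDelta}^{(n)}_{{J}_1,{J}_2}(\hat\bth^{(n)}).
$$
Because $\hat\bth^{(n)}$ is root-$n$ consistent and asymptotically discrete, the discreteness lets me replace $\hat\bth^{(n)}$ by a nonrandom local perturbation $\bth+n^{-1/2}\btau$ (the usual Le Cam sufficiency-of-a-fixed-$\btau$ argument: an asymptotically discrete estimator takes only finitely many values on each bounded block, so the linearity \eqref{assA4}, which holds for fixed $\btau$, upgrades to a uniform-over-$\btau$ statement).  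Then Proposition~\ref{asy0}, with $\btau=n^{1/2}(\hat\bth^{(n)}-\bth)$, gives
$$
\tenq{\bDelta}^{(n)}_{{J}_1,{J}_2}(\hat\bth^{(n)})
= \tenq{\bDelta}^{(n)}_{m,{J}_1,{J}_2}(\bth)
 - \bUpsilon^{(n)}_{{J}_1,{J}_2,f}(\bth)\,n^{1/2}(\hat\bth^{(n)}-\bth)
 + o_{\rm P}(1).
$$
Substituting, the preliminary-estimator term $n^{1/2}(\hat\bth^{(n)}-\bth)$ cancels against $(\hat\bUpsilon^{(n)}_{{J}_1,{J}_2})^{-1}\bUpsilon^{(n)}_{{J}_1,{J}_2,f}(\bth)$ up to $o_{\rm P}(1)$, once I use the consistency of $\hat\bUpsilon^{(n)}_{{J}_1,{J}_2}$ for $\bUpsilon_{{J}_1,{J}_2,f}(\bth)$.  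This is the crucial self-correcting cancellation that makes the one-step estimator insensitive to the choice of $\hat\bth^{(n)}$ and leaves
$$
n^{1/2}(\tenq{\hat\bth}\n-\bth)
= \big(\bUpsilon_{{J}_1,{J}_2,f}(\bth)\big)^{-1}\,
  \tenq{\bDelta}^{(n)}_{m,{J}_1,{J}_2}(\bth) + o_{\rm P}(1).
$$

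\emph{Step 2 (asymptotic normality of the central sequence).}  It now suffices to establish a CLT for $\tenq{\bDelta}^{(n)}_{m,{J}_1,{J}_2}(\bth)=\T^{(m+1)}_{\bth}\tenq{\bGamma}^{(m,n)}_{{J}_1,{J}_2}(\bth)$.  By distribution-freeness, the joint law of the center-outward ranks and signs under $\mathrm{P}^{(n)}_{\bth;f}$ does not depend on $f$, so I may compute the limiting variance once and for all.  The plan is to first pass from the empirical $\tenq{\bGamma}^{(n)}_{i,{J}_1,{J}_2}$ to the population-grid version $\bar\bGamma^{(n)}_{i,{J}_1,{J}_2}$ of \eqref{barGam} — the excerpt asserts this difference vanishes in quadratic mean (via Lemmas~\ref{lem.bar.til.Delta.n} and \ref{bar.til.Gam}) — and then apply a martingale CLT to $\bar\bGamma$.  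For the variance, independence of the $\S_{\pms,t}$ across $t$ together with $\mathrm{E}[\S_{\pms,t}]=\0$ and $\mathrm{E}[\S_{\pms,t}\S'_{\pms,t}]=d^{-1}\I_d$ (the sign is uniform on the sphere) gives, for the lag-$i$ block, a covariance proportional to $\sigma^2_{{J}_1}\sigma^2_{{J}_2}d^{-2}(\I_d\otimes\I_d)$, with cross-lag terms vanishing.  Propagating through $\T^{(m+1)}_{\bth}(\I_m\otimes\cdot)\T^{(m+1)\prime}_{\bth}$ produces exactly the $d^{-2}\sigma^2_{{J}_1}\sigma^2_{{J}_2}\T^{(n)}_{\bth}\T^{(n)\prime}_{\bth}$ middle factor of $\bOmega^{(n)}$.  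Sandwiching by $\big(\bUpsilon_{{J}_1,{J}_2,f}\big)^{-1}$ from Step~1 yields the stated $\bOmega^{(n)}$, and the Cram\'er–Wold device plus the martingale CLT deliver joint normality.

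\emph{Main obstacle.}  I expect the genuinely hard part to be \textbf{not} the algebra of Step~1 but the interchange of limits hidden in ``$m\to\infty$ (hence $n\to\infty$)'' combined with ``$n_R,n_S\to\infty$.''  Proposition~\ref{asy0} holds for the \emph{truncated} sequence $\tenq{\bDelta}^{(n)}_{m,{J}_1,{J}_2}$, whereas the estimator uses the \emph{untruncated} $\tenq{\bDelta}^{(n)}_{{J}_1,{J}_2}$; closing the gap requires a uniform tail bound showing $\tenq{\bDelta}^{(n)}_{{J}_1,{J}_2}-\tenq{\bDelta}^{(n)}_{m,{J}_1,{J}_2}=o_{\rm P}(1)$ uniformly as $m,n\to\infty$, which rests on the exponential decay of the blocks of $\Q^{(n)}_{\bth}$ guaranteed by the stationarity/invertibility Assumption~(A1).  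The second delicate point is justifying that the grid approximation error (empirical $\F^{(n)}_{\pms}$ versus true $\F_{\pms}$) is negligible at rate $n^{1/2}$; this is exactly where the joint regime $n_R,n_S\to\infty$ and the quadratic-mean consistency of the center-outward map are needed, and I would lean on the cited Glivenko–Cantelli-type results of Hallin et al.~(2020a) rather than reprove them.
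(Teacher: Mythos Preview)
Your proposal is correct and follows essentially the same route as the paper: the paper's proof performs exactly your Step~1 linearization (invoking asymptotic discreteness via Kreiss~(1987), the asymptotic linearity of Proposition~\ref{asy0}, and consistency of $\hat\bUpsilon^{(n)}_{{J}_1,{J}_2}$) to reach $n^{1/2}(\tenq{\hat\bth}\n-\bth)=\bUpsilon_{{J}_1,{J}_2,f}^{-1}\tenq{\bDelta}^{(n)}_{{J}_1,{J}_2}(\bth)+o_{\rm P}(1)$, then cites Lemma~\ref{lem.bar.til.Delta.n} for the asymptotic normality of the rank-based central sequence. The two obstacles you flag---the truncation gap and the $\tenq{\bGamma}\to\bar{\bGamma}$ grid approximation---are precisely what Lemmas~\ref{bar.til.Gam} and~\ref{lem.bar.til.Delta.n} handle, using exactly the exponential decay of the Green matrices under (A1) and the Glivenko--Cantelli result of Hallin~(2017) that you anticipate.
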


See Appendix B for the proof.  Appendix \ref{secalg} discusses the   computational aspects of the procedure and describes the algorithm we are using.  
Codes are available from the authors'   {\tt GitHub} page \url{https://github.com/HangLiu10/RestVARMA}.

\subsection{Some standard score functions} \label{Sec: examples}

The rank-based cross-covariance matrices 
$\tenq{\bGamma}^{(n)}_{{J}_1, {J}_2}$, hence also the resulting R-estimator,  depend on the choice of score functions ${J}_1$ and ${J}_2$. 
We provide three examples of sensible choices extending scores that are widely applied in the 
univariate     (see e.g. Hallin and La Vecchia~(2019)) and  the elliptical multivariate setting  (see Hallin and Pandaveine~(2004)). 


\textbf{Example 1} ({\it Sign test} scores). Setting ${J}_1(u) =1= {J}_2(u) $  yields the center-outward sign-based  cross-covariance matrices 
\begin{align}
\tenq{\bGamma}_{i, \text{sign}}^{(n)}(\bth) = (n-i)^{-1} \sum_{t=i+1}^n   \S^{(n)}_{{\pms}, t}(\bth) \S^{(n)\prime}_{{\pms}, t-i}(\bth), \quad i = 1, \ldots , n-1. \label{Eq. Signs}
\end{align}
The resulting   $\tenq{\bDelta}^{(n)}_{\text{sign}}(\bth)$  entirely relies  on the center-outward  signs $\S^{(n)}_{{\pms}, t}(\bth)$, which should make them particularly robust  and explains the terminology  {\it sign test}  scores.  

\textbf{Example 2} ({\it Spearman} scores).   Another simple choice is ${J}_1(u) = {J}_2(u) = u$.   The corresponding rank-based cross-covariance matrices   are
\begin{align}
\tenq{\bGamma}_{i, \text{Sp}}^{(n)}(\bth) = (n-i)^{-1} \sum_{t=i+1}^n   \F^{(n)}_{{\pms}, t} \F^{(n)\prime}_{{\pms}, t-i}, \quad i = 1, \ldots , n-1,
\end{align}
with $ \F^{(n)}_{{\pms}, t}:= \F^{(n)}_{{\pms}}({\bf Z}\n_t(\bth))$, reducing, for $d=1$, to Spearman autocorrelations, whence the terminology  {\it Spearman}  scores. 

\textbf{Example 3} ({\it van der Waerden} or {\it normal} scores). Finally,  ${J}_1(u) = {J}_2(u) = \big((F^{\chi^2}_d)^{-1} (u)\big)^{1/2}\!$, where $F^{\chi^2}_d$ denotes the chi-square distribution function with $d$ degrees of freedom, yields   the {\it van der Waerden} (vdW) {\it rank scores}, with 
 cross-covariance matrices  
\begin{align}
\tenq{\bGamma}_{i, \text{vdW}}^{(n)}(\bth) &= (n-i)^{-1}\!\! \sum_{t=i+1}^n \left[\big(F^{\chi^2}_d\big)^{-1}\! \left(\frac{R^{(n)}_{{\pms}, t}(\bth)}{n_R + 1}\right)\right]^{1/2}\! \left[\big(F^{\chi^2}_d\big)^{-1}\! \left(\frac{R^{(n)}_{{\pms}, t-i}(\bth)}{n_R + 1}\right)\right]^{1/2} 
\!\!  \S^{(n)}_{{\pms}, t} (\bth)\S^{(n)\prime}_{{\pms}, t-i}(\bth),\nonumber \\
& \qquad \qquad \qquad\quad\qquad \qquad \qquad\quad\qquad \qquad\qquad \qquad\qquad
\quad i = 1, \ldots , n-1. \label{vdWGam}
\end{align}

 Adequate  choices of ${J}_1$ and ${J}_2$, namely,  
 \begin{equation}\label{vp1vp2.optimal}
{J}_1  = \varphi_{\mathfrak f}
 \circ \left(F^\star_{d;\mathfrak f}\right)^{-1}   \quad \text{and} \quad  {J}_2 = \left(F^\star_{d;\mathfrak f}\right)^{-1} ,
\end{equation}
  yield asymptotic efficiency of  $\tenq{\hat{\bth}}\n$ under spherical distributions with radial density $\mathfrak f$. Indeed,   it is shown in Chernozhukov et al. (2017)   that, for  spherical distributions, $\F_{{\pms}}$ actually coincides with~$\F_{\text{\tiny\!{\rm ell}}}$. Hence,  $\bar{\bDelta}^{(n)}_{{J}_1, {J}_2}$, under spherical density $f$,    coincides with~the central sequence~$\bDelta^{(n)}_{f}$. 
Therefore, due to the convergence  in quadratic mean of $\tenq{\bDelta}^{(n)}_{{J}_1, {J}_2}$ to~$ \bar{\bDelta}^{(n)}_{{J}_1, {J}_2}$,  
 $\tenq{\bDelta}^{(n)}_{{J}_1, {J}_2}$ and 
$\bDelta^{(n)}_{f}$ are asymptotically equivalent and $\bUpsilon_{{J}_1, {J}_2, f}(\bth)$ coincides with the Fisher information matrix and~$\tenq{\hat{\bth}}\n$  achieves (parametric) asymptotic efficiency. 

Condition \eqref{vp1vp2.optimal} is satisfied by the van der Waerden scores for Gaussian $\mathfrak f$: the corresponding R-estimator, thus, is parametrically efficient under spherical Gaussian innovations. If the residuals are sphericized prior to the computation of  center-outward ranks and signs, then parametric efficiency is reached under any Gaussian innovation density; we have explained in Section~\ref{affinvsec} why this may be desirable or not. Neither the Spearman nor the sign test scores  satisfy \eqref{vp1vp2.optimal} for any $\mathfrak f$.  Efficiency, however,  is just one possible criterion for the selection of $J_1$ and $J_2$ and many alternative options are available, based on   ease-of-implementation (as in Examples 1 and~2) or robustness (as in Example 1). 

\section{Numerical illustration} \label{Sec: MC}

A numerical study of the performance of our R-estimators was conducted in dimensions $d=2$ (Sections~\ref{Sect: MClarge}, \ref{Sect: MCsmall}, and~\ref{subsec.AO}) and $d=3$ (Section~\ref{dim3sec}). Further results are available in Appendix~\ref{Supp.sim}.

In dimension $d=2$, we considered the bivariate  VAR(1) model\vspace{-1mm}
\begin{equation}\label{VAR}
\left(\I_d - \A L \right) \X_t =  \bepsilon_t, \quad t \in \Z
\end{equation}
with the same parameter  of interest $\bth := \text{vec}\A = (a_{11}, a_{21}, a_{12}, a_{22})^\prime = (0.2, -0.6, 0.3, 1.1)^\prime$ as in the motivating example of Section~\ref{subsec.glance} and spherical Gaussian, spherical $t_3$, skew-normal, skew-$t_3$, Gaussian mixture, and non-spherical Gaussian innovations, respectively. The skew-normal and skew-$t_3$ distributions are described in Appendix~\ref{skewsec}; the Gaussian mixture  is of the form 
\begin{equation}
\frac{3}{8}   {\cal N}(\bmu_1, \bSigma_1) + \frac{3}{8}   {\cal N}(\bmu_2, \bSigma_2) + \frac{1}{4} {\cal N}(\bmu_3, \bSigma_3),
\label{Eq. Mixture}
\end{equation}
with $\bmu_1 \! =\! (-5, 0)^\prime$,  $\bmu_2\! =\! (5, 0)^\prime$,  $\bmu_3\! =\! (0, 0)^\prime$, 
 $\vspace{1mm}\bSigma_1\! =\! 
 \left(\!
\begin{array}{cc}
7\! & 5 \\
5\! & 5
\end{array}\!\right)$, 
 $\bSigma_2 \! =\! 
 \left(\!
\begin{array}{cc}
7\!\! & \!\!-6 \\
\!-\!6 \!& \! 6
\end{array}\!\right)$, and 
 $\bSigma_3 \! =\! 
 \left(\!
\begin{array}{cc}
4\! & 0 \\
0\! & 3
\end{array}\!\right)\!.
$ 
A scatterplot of  $n=1000$ innovations drawn from this mixture is shown in Appendix~\ref{skewsec}. For the non-spherical Gaussian case, as in  the last panel of Table~\ref{Tab.n1000}\vspace{1mm},  we set the covariance matrix  to~$
\bSigma_4 \! =\! 
 \left(\!
\begin{array}{cc}
5 & 4 \\
4 & 4.5
\end{array}\!\right)\!
$,  
so that the bivariate innovation exhibits a large positive correlation~of~$0.843$.

For each of these  innovation densities, we generated $N=300$ Monte Carlo realizations---larger values of $N$ did not show significant changes--- of the stationary solution of \eqref{VAR}, of length $n=1000$ ($n$ ``large": Section~\ref{Sect: MClarge}) and $n=300$ ($n$ ``small": Section~\ref{Sect: MCsmall}), respectively. For each realization, we computed the QMLE,  our R-estimators (sign test, Spearman, and van der Waerden scores), and, for the purpose of comparison, 
 the QMLE based on $t_5$ likelihood (although  inconsistent, QMLEs based on $t$-distribution are a popular choice in the time series literature) and  the reweighted multivariate least trimmed squares estimator (henceforth, RMLTSE)  of Croux and Joossen (2008). The boxplots and  tables of bias and mean squared errors below allow for a comparison of the finite-sample performance of  our R-estimators and those routinely-applied M-estimators. 

Throughout,   QMLEs were computed from the {\tt MTS} package in R program,  RMLTSEs   were computed from the function {\tt varxfit} in the package {\tt rmgarch} in R program, and $t_5$-QMLEs were obtained by  minimizing the negative log-likelihood function using {\tt optim} function in R program.
The R-estimators were obtained via the one-step procedure as in the algorithm described in 
  Appendix~\ref{secalg}---five iterations for~$n=1000$,  ten iterations for $n=300$.

\begin{table}[htbp]
\caption{The estimated bias ($\times 10^3$), MSE ($\times 10^3$), and overall MSE ratios of the QMLE,   $t_5$-QMLE , RMLTSE, and R-estimators  under various innovation densities.  The sample size is~$n = 1000$; $N = 300$ replications.}\label{Tab.n1000}
\centering
\small
\begin{tabular}{ccccc|cccc|c}
\hline
            & \multicolumn{4}{c}{Bias ($\times 10^3$)}           & \multicolumn{4}{c}{MSE  ($\times 10^3$)}  &  MSE ratio \\  \hline
                    & $a_{11}$          & $a_{21}$ & $a_{12}$ & $a_{22}$ & $a_{11}$ & $a_{21}$ & $a_{12}$ & $a_{22}$ &                   \\ \hline
                    (Normal)           &                &        &        &        &       &       &       &       &       \\
QMLE               & -0.484         & -0.054 & 0.201  & -1.571 & 0.769 & 0.679 & 0.173 & 0.195 &       \\
$t_5$-QMLE          & -0.547         & -0.132 & 0.429  & -1.582 & 0.833 & 0.751 & 0.190 & 0.210 & 0.916 \\
RMLTS              & -0.629         & -0.992 & 0.424  & -1.334 & 0.843 & 0.760 & 0.193 & 0.215 & 0.903 \\
vdW                & -0.662         & -0.434 & 0.504  & -1.833 & 0.780 & 0.688 & 0.178 & 0.205 & 0.982 \\
Spearman           & -1.263         & -0.979 & 1.274  & -2.134 & 0.810 & 0.728 & 0.189 & 0.216 & 0.935 \\
Sign               & -0.372         & -0.600 & 1.545  & -2.642 & 1.314 & 1.141 & 0.305 & 0.310 & 0.592 \\ \hline
(Mixture)          &                &        &        &        &       &       &       &       &       \\
QMLE               & -1.318         & -0.476 & 2.907  & -0.103 & 0.839 & 0.153 & 0.342 & 0.056 &       \\
$t_5$-QMLE          & -0.852         & 0.483  & 4.820  & 0.248  & 4.420 & 0.261 & 1.641 & 0.156 & 0.215 \\
RMLTS              & -0.703         & 0.268  & 3.166  & -0.116 & 0.876 & 0.168 & 0.351 & 0.069 & 0.949 \\
vdW                & -1.111         & -0.465 & 2.347  & -0.883 & 0.316 & 0.085 & 0.149 & 0.042 & 2.346 \\
Spearman           & -0.841         & -0.539 & 2.338  & -0.791 & 0.291 & 0.088 & 0.140 & 0.041 & 2.480 \\
Sign               & -1.691         & 0.048  & 5.256  & -1.425 & 1.332 & 0.149 & 0.564 & 0.074 & 0.656 \\ \hline
\multicolumn{2}{l}{(Skew-normal)} &        &        &        &       &       &       &       &       \\
QMLE               & -0.992         & 1.800  & 0.651  & -2.108 & 0.804 & 1.039 & 0.281 & 0.311 &       \\
$t_5$-QMLE          & -0.378         & 2.588  & -0.083 & -2.827 & 1.000 & 1.294 & 0.365 & 0.397 & 0.797 \\
RMLTS              & -0.519         & 1.515  & 0.172  & -2.383 & 0.835 & 1.111 & 0.295 & 0.333 & 0.946 \\
vdW                & -1.031         & 0.990  & 0.811  & -2.520 & 0.668 & 0.998 & 0.214 & 0.291 & 1.122 \\
Spearman           & -1.295         & 0.625  & 0.848  & -2.171 & 0.694 & 1.032 & 0.222 & 0.294 & 1.086 \\
Sign               & -1.608         & 0.888  & 1.346  & -4.039 & 1.360 & 1.673 & 0.415 & 0.519 & 0.614 \\ \hline
\multicolumn{2}{l}{(Skew-$t_3$)}    &        &        &        &       &       &       &       &       \\
QMLE               & -2.242         & -2.055 & 0.763  & 0.213  & 1.022 & 0.856 & 0.379 & 0.336 &       \\
$t_5$-QMLE          & 3.032          & 1.865  & -2.078 & -2.134 & 1.062 & 0.714 & 0.707 & 0.463 & 0.880 \\
RMLTS              & -0.186         & 0.357  & -0.613 & -1.373 & 0.517 & 0.483 & 0.278 & 0.237 & 1.711 \\
vdW                & -1.250         & 0.170  & 1.100  & -2.014 & 0.432 & 0.526 & 0.151 & 0.204 & 1.973 \\
Spearman           & -1.022         & 0.119  & 1.018  & -1.891 & 0.438 & 0.537 & 0.149 & 0.204 & 1.952 \\
Sign               & -1.515         & -0.532 & 1.065  & -3.410 & 0.966 & 1.095 & 0.333 & 0.501 & 0.895 \\ \hline
($t_3$)            &                &        &        &        &       &       &       &       &       \\
QMLE               & -3.558         & -0.210 & 2.092  & -0.967 & 0.844 & 0.671 & 0.205 & 0.185 &       \\
$t_5$-QMLE          & -2.185         & -0.433 & 1.332  & -0.613 & 0.386 & 0.349 & 0.098 & 0.095 & 2.052 \\
RMLTS              & -2.473         & -0.510 & 1.313  & -0.691 & 0.438 & 0.384 & 0.108 & 0.106 & 1.836 \\
vdW                & -2.680         & -1.937 & 2.393  & -1.053 & 0.602 & 0.557 & 0.143 & 0.135 & 1.325 \\
Spearman           & -2.880         & -2.014 & 2.663  & -1.033 & 0.640 & 0.589 & 0.150 & 0.142 & 1.253 \\
Sign               & -2.204         & -3.916 & 1.996  & 0.104  & 0.784 & 0.681 & 0.201 & 0.179 & 1.032 \\ \hline
\multicolumn{2}{l}{(Non-spherical)} &        &        &        &       &       &       &       &       \\
QMLE               & 0.513          & 2.682  & -0.572 & -2.756 & 1.962 & 1.705 & 1.314 & 1.115 &       \\
$t_5$-QMLE          & 0.992          & 3.953  & -0.154 & -3.008 & 3.105 & 2.618 & 2.013 & 1.696 & 0.646 \\
RMLTS              & 0.077          & 2.834  & 0.043  & -2.473 & 2.118 & 1.886 & 1.400 & 1.181 & 0.926 \\
vdW                & -0.335         & 3.327  & 0.156  & -4.017 & 2.597 & 2.273 & 1.386 & 1.212 & 0.816 \\
Spearman           & -0.373         & 3.361  & 0.487  & -3.853 & 2.562 & 2.268 & 1.411 & 1.222 & 0.817 \\
Sign               & 4.157          & 8.485  & -4.713 & -8.645 & 6.717 & 5.955 & 3.300 & 2.582 & 0.329 \\ \hline
 \end{tabular}
\end{table}

\subsection{Large sample results}\label{Sect: MClarge}

The averaged bias and MSE  of each estimator for $n=1000$ (factorizing into $n_Rn_S=25\times 40$)  are summarized in Table~\ref{Tab.n1000}, where ratios of the sums (over the four parameters) of the MSEs of the QMLE over those of each of the other estimators are also reported. Because of space constraints, the corresponding  boxplots under the skew-normal (Figure~\ref{boxSkewNorm}), skew-$t_3$ (Figure~\ref{boxSkewt3})), spherical~$t_3$ (Figure~\ref{boxt3}) and non-spherical Gaussian (Figure~\ref{boxell}) innovations are provided in Appendix~\ref{AppD31}.

Inspection of Table~\ref{Tab.n1000} reveals that under asymmetric innovation densities (mixture, skew-normal and skew-$t_3$), the vdW and Spearman R-estimators dominate  the other three M-estimators, with significant efficiency gains under the mixture and skew-$t_3$ distributions. One may wonder what happens if  asymmetry is removed and only the heavy-tail feature is kept. The MSE ratios under the spherical $t_3$ distribution answer this question:  the R-estimators  still outperform the QMLE. 
Recalling that asymptotic optimality can be achieved by our R-estimators under spherical densities, it would be interesting to investigate their performance under a non-spherical distribution with large correlation. The MSE ratios under the non-spherical Gaussian distribution show that the vdW and Spearman R-estimators lose only small efficiency with respect to the QMLE: as we have observed in the motivating example of Section~\ref{subsec.glance}, the good performance of the R-estimators under asymmetric distributions is not obtained at the expense of a loss of accuracy under the symmetric ones.\vspace{-1.5mm}

\subsection{Small sample results } \label{Sect: MCsmall}

A major advantage of R-estimation over other semiparametric procedures is the fact that it does not require any kernel density estimation, which allows for 
 applying our method also in relatively small samples. To gain understanding on that aspect, we consider the same setting as in  Section \ref{Sect: MClarge}, but with sample size $n=300$ (an order of magnitude which is quite common  in real-data applications:  
see e.g. Section \ref{empirsec}) factorizing into $n_R n_S=15\times 20$. Due to space constraints, the results are shown  in Appendix~\ref{AppD32}, where Table~\ref{Tab.n300} provides the averaged bias, MSE and overall MSE ratios   of all estimators under various innovation densities; all results 
in line with those in Table~\ref{Tab.n1000}. The corresponding boxplots are displayed (still in Appendix~\ref{AppD32}) in Figures~\ref{boxmixture300}-\ref{boxt3n300} and confirm the superiority over the QMLE,  
also in  small samples, of our R-estimators  under non-elliptical innovations:  even in small samples (with~$n_R$  and~$n_S$ as small as~15 and~20), our R-estimators outperform the 
QMLE under non-Gaussian innovations, while performing equally well under Gaussian conditions.\vspace{-1.5mm}


\subsection{Resistance to outliers}\label{subsec.AO}

We also   investigated the robustness properties  of our estimators and, more particularly, their resistance to additive outliers (AO).  Following Maronna et al.~(2019), 
 we first generated  Gaussian VAR(1) realizations~$\{\X_t\}$ of~(\ref{VAR}) ($n=300$); then, adding the AO,   obtained the  contaminated 
observations~$\{\X^*_t=  \X_t + I(t = h) \bxi\}$, where   $h$ and $\bxi$ denote the location and size of the AO, respectively. We set $h$ in order to have 
$5\%$ equally spaced AOs and put~$\bxi = (4, 4)^\prime$. The parameter $\bth$ remains the same as in the previous settings.  The contaminated 
observations are demeaned prior to estimation procedures.   Figure~\ref{AOboxn300} provides 
the boxplots of our three R-estimators (sign, Spearman, vdW) along with the boxplots of the QMLE, $t_5$-QMLE, and  RMLTSE. Comparing those boxplots and the figures shown at the bottom of Table~\ref{Tab.n300} (Appendix~\ref{AppD32})   with the uncontaminated ones of Figure~\ref{box3Mix} reveals  that   AO have a severe impact on the 
QMLE but a much less significant one on the   R-estimators. For $a_{12}$ and $a_{22}$, the bias and variance of the R-estimates are 
comparable to those of the RMLTSE, with the latter displaying a much larger bias for 
$a_{11}$ and $a_{21}$. Overall,  we remark that for the estimation of all parameters, vdW and Spearman R-estimators feature less variability than the $t_5$-QMLE and  RMLTSE. 

To gauge the trade-off between robustness and efficiency, we compare the MSE ratio of RMLTSE to the MSE ratios of the R-estimators under Gaussian innovation density, as displayed in Table \ref{Tab.n1000} (see top panel)---see also Table \ref{Tab.n300} in Appendix D. 
The vdW and Spearman R-estimators exhibit MSE ratios equal to 0.982 and 0.935, respectively,  which corresponds to a smaller efficiency loss than for the   RMLTSE (MSE ratio equal to 0.903)---suggesting that the  trade-off between robustness and efficiency is more favorable for vdW and Spearman R-estimators than for the RMLTSE.

\begin{center}
\begin{figure}[h!]
\caption{Boxplots of the QMLE, $t_5$-QMLE, RMLTSE, and R-estimators (sign test, Spearman, and van der Waerden scores) under   Gaussian innovations in the presence of additive outliers  (sample size $n = 300$;  $N = 300$ replications). The horizontal red line represents the actual parameter value.}
\begin{center}
\includegraphics[width=1\textwidth, height=0.5\textwidth]{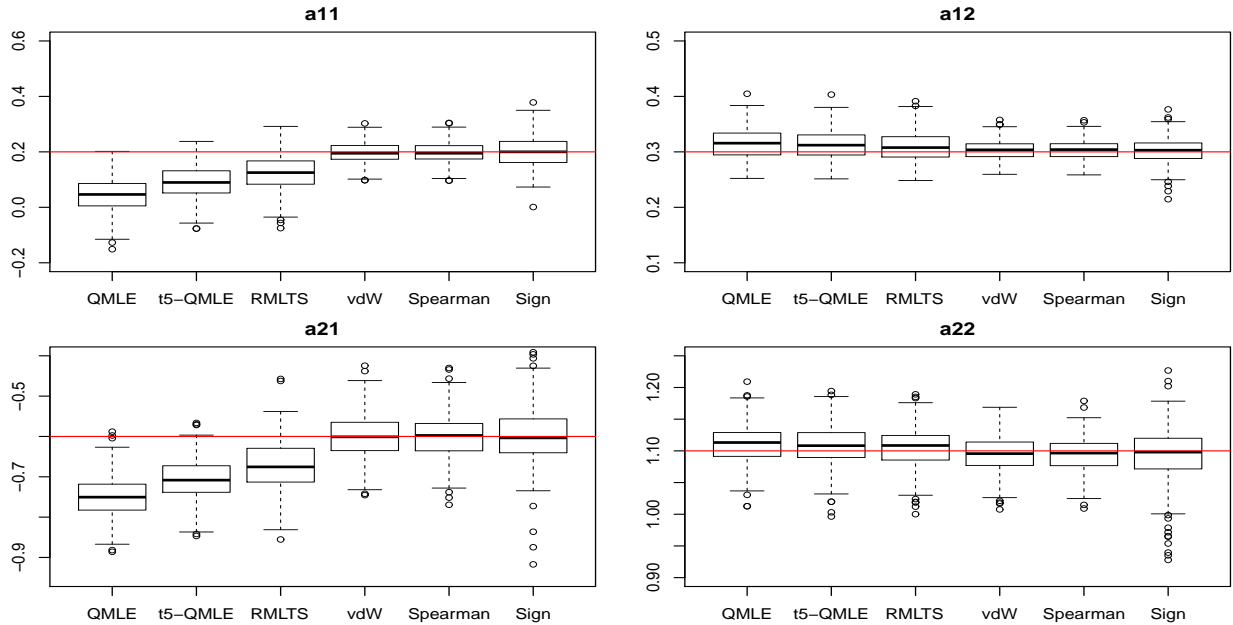}
\end{center}
\label{AOboxn300}
\end{figure}
\end{center}
\vspace{-8mm}

\subsection{Further simulation results}\label{dim3sec}

We also considered a trivariate ($d=3$) VAR(1), with parameter of interest $\bth 
\in \mathbb{R}^9$, Gaussian and Gaussian mixture innovations, and 
 sample size $n=1000$. The results, which confirm the bivariate ones, can be found in Appendix~\ref{dim3App} (Figures 15-16), along with details about the simulation design.

\section{Real-data example}\label{empirsec}

To conclude, we illustrate the applicability and good performance of our R-estimators in a real-data macroeconomic example.  
We consider the seasonally adjusted series of monthly housing starts (\texttt{Hstarts}) and the $30$-year conventional mortgage rate (\texttt{Mortg}---no need for seasonal adjustment) in the US  from January~1989 to January~2016, with a sample size $n=325$  each
(both  series  are freely available on the Federal Reserve Bank of Saint Louis website, to which we refer for details). The same time series were studied by Tsay (2014, Section 3.15.2). 
Following Tsay, we analyze the 
differenced  series;  Figure \ref{TS_US}   displays  plots of their demeaned  differences. 
While the \texttt{Mortg}   series seems to be driven by  skew innovations (with large positive values   more likely than the negative ones), the \texttt{Hstarts} series looks   more symmetric  about zero. Visual inspection suggests the presence of significant auto- and cross-correlations, as    expected from   macroeconomic theory.\vspace{-4mm}

\begin{center}
\begin{figure}[h!]
\caption{Plots of demeaned differences 
  of the monthly housing starts (measured in thousands of units; left panel) and the $30$-year conventional mortgage rate (in percentage; right panel) in the US, from January 1989 through January 2016.\vspace{-0mm}
}
\begin{center}
\includegraphics[scale=.5]{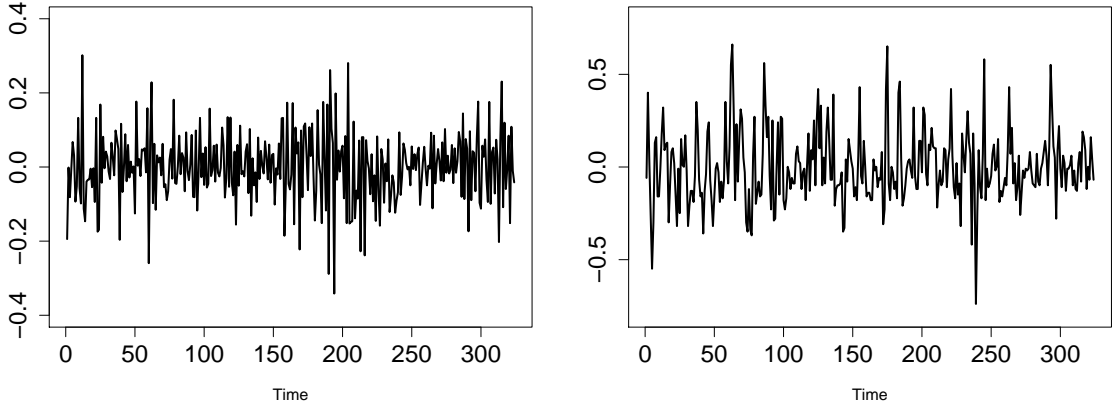}\vspace{-7mm}
\end{center}
\label{TS_US}
\end{figure}
\end{center}

The AIC criterion  selects a VARMA($3, 1$) model, the parameters of which we estimated using the benchmark QMLE (see e.g.~Tsay  (2014), Chapter 3) and our R-estimators (sign, Spearman, and van der Waerden).  The QMLE-based multivariate Ljung-Box test does not reject the model  at  nominal level~1\%.  We report the estimates (along with their standard error, SE, in parentheses) in Table~\ref{emp} in the online Appendix~\ref{tablesec}.  Spotting the differences in Table~\ref{emp} is all but simple, even though some look quite significant (see, for instance, the QMLE and R-estimates of $\A_{21}$ and~$\A_{22}$) and analyzing them is even more difficult. 

Impulse response functions (IRFs) are easier to read and interpret; they are widely applied in macroeconometrics---see e.g. Tsay (2014) for a book-length description. Intuitively the IRFs express the effect of changes in one variable on another variable in multivariate time series analysis. In the VARMA case, the IRF is obtained using a MA representation: see Tsay (2014, Section 3.15.2) and Appendix E.2 in the supplementary material for  mathematical details. 
 In Figures~\ref{IRF_VARMA31_Orig}, we plot the estimated IRFs resulting from the QMLE and R-estimators. The top plots show the response of \texttt{Hstarts} to  its own shocks (left panel)  and to the shocks of \texttt{Mortg}; the bottom panels show  the response of \texttt{Mortg} to  its own shocks (right panel)  and to the shocks of \texttt{Hstarts}. 
 Looking at the plots, we see that all IRFs have similar patterns. For instance, for all estimators, the top left panel illustrates that the IRF of the \texttt{Hstarts} to its own shocks have two consecutive increases after two initial drops. However, the decay of the QMLE-based IRF is uniformly faster than  the R-estimator-based ones. Also, the other plots exhibit  a more pronounced decay in the QMLE-based IRFs. Thus, R-estimators suggest a more persistent impact of the shocks:  decision makers should be aware of this inferential aspect in the implementation of their economic policy. \vspace{-5mm}


\begin{center}
\begin{figure}[H]
\caption{Plots of estimated impulse response functions    of the VARMA($3, 1$) model for the differenced \texttt{Hstarts} (top panels) and \texttt{Mortg}  (bottom panels) data, based on the QMLE and the R-estimators.} \hspace{1.25cm}
\includegraphics[scale=0.45]{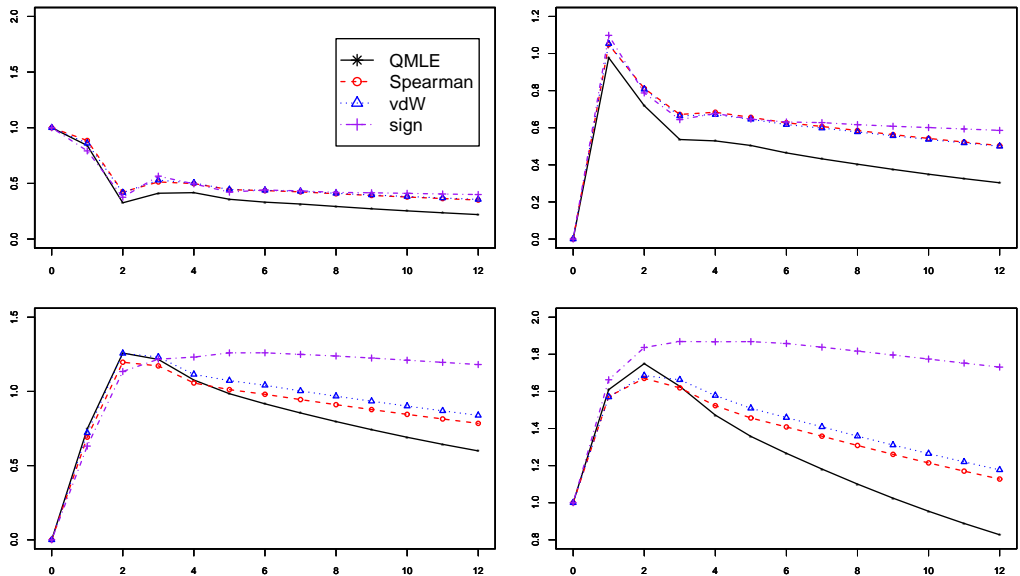}
\label{IRF_VARMA31_Orig}
\end{figure}\vspace{-5mm}
\end{center}

\section{Conclusions and perspectives} \label{Sec:concl}

We define a class of R-estimators based  on the novel concept of center-outward ranks and signs, itself closely related to the theory of optimal measure transportation. Monte Carlo experiments show that these estimators significantly outperform the classical QMLE under skew multivariate innovations, even when the validity conditions for the latter are satisfied.   In a companion paper, 
we study the performance of the corresponding  rank-based tests for VAR models, and, more particularly, propose a center-outward Durbin-Watson test for multiple-output regression and a test of VAR($p_0$) against VAR($p_0 + 1$) dependence. Our  methodology is not limited to the VARMA case, though; its extension to nonlinear multivariate models, like the dynamic conditional correlation model of Engle (2002), is the subject of ongoing research.  

\color{black}


\appendix

\newpage


\section{Technical material: algebraic preparation}\label{AppLAN}
Denote by $\G_u$ and~$\H_u$, $u \in \Z$ the {\it Green's matrices} associated with the linear difference operators~$\A(L)$ and $\B(L)$   in Section~\ref{secdef}: those matrices  
 are defined as the solutions of  the homogeneous   linear recursions 
$$\A(L)\G_u=\G_u - \sum_{i=1}^{p} \A_i   \G_{u-i} = {\bf 0} 
 \quad\text{and}\quad\B(L)\H_u=\sum_{i=0}^{q} \B_i   \H_{u-i}   = {\bf 0},
 \quad u\in\mathbb{Z}$$
 with initial values $\I_d, {\bf 0},\ldots ,{\bf 0}$ at $u=0,-1,\ldots ,-p+1$ and $u=0,-1,\ldots ,-q+1$, respectively. 
 Then,  the residual process $\{\ZZ_t^{(n)}(\bth); 1 \leq t \leq n \}$ has the  representation
\begin{align}
\ZZ_t^{(n)}(\bth) =& \sum_{i=0}^{t-1} \sum_{j=0}^p \H_i \A_j \X^{(n)}_{t-i-j} \nonumber \\
& + \begin{bmatrix}
\H_{t+q-1} & \cdots & \H_t
\end{bmatrix}
\begin{bmatrix}
\I_d & \0 &\cdots & \0 \\
\B_1 & \I_d & \cdots & \0 \\
\vdots & \vdots &\ddots & \vdots \\
\B_{q-1} & \B_{q-2} & \cdots &\I_d
\end{bmatrix}
\begin{bmatrix}
\bepsilon_{-q+1} \\
\vdots \\
\bepsilon_0
\end{bmatrix} \label{Zt.recursive}
\end{align}
(see Hallin (1986), Garel and Hallin (1995), or  Hallin and Paindaveine~(2004)).

Assumption (A1) ensures the exponential decrease of $\{\Vert \H_u\Vert , u\in \N\}$ as $u\to\infty$. Specifically, there exists some $\varepsilon > 0$ such that $\Vert \H_u\Vert  (1+\varepsilon)^u$ converges to $0$ as $u \rightarrow \infty$. This also holds for the Green matrices~$\G_u$ associated with the operator $\A (L)$. It follows that 
  the initial values~$\{\bepsilon_{-q+1}, \ldots , \bepsilon_0\}$ and~$\{\X_{-p + 1}, \ldots , \X_{0}\}$ in (\ref{Zt.recursive}),  which are typically unobservable, have no asymptotic  influence on the residuals nor any asymptotic results. Therefore, they all can safely be set to zero in the sequel. This allows us to invert the AR and MA polynomials, and to define the Green matrices $\G_u$ and~$\H_u$ as the matrix coefficients of the inverted operators~$(\A (L))^{-1}$ and $(\B (L))^{-1}$:
$$\sum_{u=0}^\infty \G_u z^u := \left(  \I_d - \sum_{i=1}^p  \A_i z^i\right)^{-1}\quad\text{and}\quad\sum_{u=0}^\infty \H_u z^u := \left(  \sum_{i=0}^q \B_i z^i\right)^{-1}, \quad z \in \C, |z| < 1.$$

Associated with an arbitrary $d$-dimensional linear difference operator 
 $\CC (L) := \sum_{i=0}^\infty \CC_i L^i$ (this of course includes operators of finite order $s$), define, for any integers $u$ and $v$, the $d^2 u \times d^2 v$ matrices
\[
\CC_{u, v}^{(l)} :=
\begin{bmatrix}
\CC_0 \otimes \I_d & \0 & \ldots  & \0 \\
\CC_1 \otimes \I_d & \CC_0 \otimes \I_d & \ldots  & \0 \\
\vdots & & \ddots & \vdots \\
\CC_{v-1} \otimes \I_d & \CC_{v-2} \otimes \I_d & \ldots  & \CC_0 \otimes \I_d \\
\vdots & & & \vdots \\
\CC_{u-1} \otimes \I_d & \CC_{u-2} \otimes \I_d & \ldots  & \CC_{u-v} \otimes \I_d 
\end{bmatrix}
\]
and
\[
\CC_{u, v}^{(r)} :=
\begin{bmatrix}
\I_d \otimes \CC_0  & \0 & \ldots  & \0 \\
\I_d \otimes \CC_1  &  \I_d \otimes \CC_0  & \ldots  & \0 \\
\vdots & & \ddots & \vdots \\
\I_d  \otimes \CC_{v-1} &  \I_d \otimes \CC_{v-2}  & \ldots  & \I_d \otimes \CC_0  \\
\vdots & & & \vdots \\
\I_d \otimes \CC_{u-1}  &  \I_d \otimes \CC_{u-2}  & \ldots  &  \I_d  \otimes \CC_{u-v} 
\end{bmatrix}
.
\]
Write $\CC_u^{(l)}$ for $\CC_{u, u}^{(l)}$ and $\CC_u^{(r)}$ for $\CC_{u, u}^{(r)}$. With this notation, note that $\G_u^{(l)}, \G_u^{(r)}, \H_u^{(l)}$, and~$\H_u^{(r)}$ are the inverses of $\A_u^{(l)}, \A_u^{(r)}, \B_u^{(l)}$ and $\B_u^{(r)}$, respectively. Denoting by $\CC_{u, v}^{\prime (l)}$ and $\CC_{u, v}^{\prime (r)}$ the matrices associated with the transposed operator $\CC^\prime (L) := \sum_{i=0}^\infty \CC^\prime_i L^i$, we  have~$\G_u^{\prime (l)}~\!=~\!(\A_u^{\prime (l)})^{-1}$, $\H_u^{\prime (l)} = (\B_u^{\prime (l)})^{-1}$, and so on. Define the $d^2 (p+q) \times d^2 (p+q)$ matrix 
\begin{equation}\label{defM}
\M_{\bth} := (\G^{\prime (l)}_{p+q, p} \vdots \H^{\prime (l)}_{p+q, q}):
\end{equation}
 under Assumption (A1), 
 $\M_{\bth}$ is of full rank. 

Also, consider the operator $\D (L) := \I_d + \sum_{i=1}^{p+q} \D_i L^i$ (note that $\D (L)$ and most quantities defined below depends on $\bth$; for simplicity, however, we are dropping this reference to $\bth$), where
\[
\begin{bmatrix}
\D_1^\prime \\
\vdots \\
\D_{p+q}^\prime
\end{bmatrix}
:= -
\begin{bmatrix}
\G_q & \G_{q-1} & \ldots  & \G_{-p+1} \\
\G_{q+1} & \G_{q} & \ldots  & \G_{-p+2} \\
\vdots & & \ddots & \vdots \\
\G_{p+q-1} & \G_{p+q-2} & \ldots  & \G_{0} \\
\H_p & \H_{p-1} & \ldots  & \H_{-q+1} \\
\H_{p+1} & \H_{p} & \ldots  & \H_{-q+2} \\
\vdots & & \ddots & \vdots \\
\H_{p+q-1} & \H_{p+q-2} & \ldots  & \H_{0}
\end{bmatrix}
^{-1}
\begin{bmatrix}
\G_{q+1} \\
\vdots \\
\G_{p+q} \\
\H_{p+1} \\
\vdots \\
\H_{p+q}
\end{bmatrix}
\]
(recall that $\G_{-1} = \G_{-2} = \cdots = \G_{-p+1} = \0 $ and $ \H_{-1} = \H_{-2} = \cdots = \H_{-q+1}=\0$).
 Let~$\{\bpsi_t^{(1)}, \ldots , \bpsi_t^{(p+q)} \}$ be a set of $d \times d$ matrices forming a fundamental system of solutions of the homogeneous linear difference equation associated with $\D (L)$. Such a system can be obtained from the Green matrices of the operator $\D (L)$ (see, e.g., Hallin 1986). Defining
\[
\bar{\bpsi}_m (\bth) :=
\begin{bmatrix}
\bpsi_{1}^{(1)} & \ldots  & \bpsi_{1}^{(p+q)} \\
\bpsi_{2}^{(1)} & \ldots  & \bpsi_{2}^{(p+q)} \\
\vdots & & \vdots \\
\bpsi_{m}^{(1)} & \ldots  & \bpsi_{m}^{(p+q)}
\end{bmatrix}
\otimes \I_d ,
\]
the {\it Casorati matrix} $\mathbf{C}_{\bpsi}$ associated with $\D (L)$ is $\bar{\bpsi}_{p+q}$. Finally, let
\begin{equation}
\P_{\bth} := \mathbf{C}_{\bpsi}^{-1}
\quad\text{and}\quad 
\Q^{(n)}_{\bth} := \H_{n-1}^{(r)} \B_{n-1}^{\prime (l)}  \bar{\bpsi}_{n-1}.\label{defPQ}
\end{equation}


\section{Proofs }\label{Tech.proofs} 

This appendix gathers the proofs of all mathematical results. Throughout, we consider $f\in {\cal F}_d$ (the family of densities introduced in Section 2) and  assume that, for all~$c \in \R^+$, there exist $b_{c; f}$ and $a_{c; f}$ in $\mathbb{R}$ such that 
$0<b_{c; f}  \leq a_{c; f}<\infty$ and $ b_{c; f} \leq f(\x) \leq a_{c; f}$
 for~$\Vert \x\Vert  \leq c$. \\

\noindent  \textbf{Proof of Proposition \ref{Prop.LAN1}.} 

The LAN result is essentially the same as in Garel and Hallin (1995, (LAN~2) in their Proposition 3.1) and, 
moving along the same lines as in  the proof of Proposition 1 in Hallin and Paindaveine (2004), 
we obtain the form \eqref{Delta} of $\bDelta^{(n)}_{f} (\bth)$.
The form of the asymptotic covariance matrix $\bLam_{f} (\bth)$ 
and its  finiteness  
easily follow from applying Lemma 4.12 in Garel and Hallin (1995). Details are left to the reader. \cqfd
%

\vspace{0.3cm}

To prove Propositions \ref{asy0} and \ref{asy}, we first need to  establish the asymptotic normality, under~${\rm P}^{(n)}_{\bth ;f}$ and~${\rm P}^{(n)}_{\bth + n^{-1/2}\btau ;f}$, of the rank-based $\tenq{\bDelta}^{(n)}_{{J}_1, {J}_2}(\bth)$.  As in the univariate case, due to the fact that the ranks are not mutually independent, the asymptotic normality of a rank statistic does not follow from classical central-limit theorems. The approach we are adopting here is inspired from H\' ajek, and consists in establishing an asymptotic representation result for the rank-based statistic under study---namely, its asymptotic equivalence with a sum of independent  variable which are no longer rank-based---then proving the asymptotic normality of   the latter. This is achieved here in a series of lemmas: Lemma~B.1 deals with the asymptotic normality of $(n-i)^{1/2}\text{vec} (\bar{\bGamma}_{i, {J}_1, {J}_2}^{(n)}(\bth))$, a corollary of which is the asymptotic normality of  the truncated versions $\bar{\bDelta}^{(n)}_{m, {J}_1, {J}_2}(\bth)$ of $\bar{\bDelta}^{(n)}_{{J}_1, {J}_2}(\bth)$;  Lemma~B.3 provides the asymptotic representation of~$\text{vec} (\tenq{\bGamma}_{i, {J}_1, {J}_2}^{(n)}(\bth))$ by $\text{vec} (\bar{\bGamma}_{i, {J}_1, {J}_2}^{(n)}(\bth))$; the asymptotic representation of $\tenq{\bDelta}^{(n)}_{{J}_1, {J}_2}(\bth)$ by $\bar{\bDelta}^{(n)}_{{J}_1, {J}_2}(\bth)$ and their asymptotic normality are obtained in Lemma~B.4.  The proofs  of Propositions~\ref{asy0} and \ref{asy} follow.

Let us start with the asymptotic normality of $(n-i)^{1/2}\text{vec} (\bar{\bGamma}_{i, {J}_1, {J}_2}^{(n)}(\bth))$. 

\begin{lemma}\label{asy.Gami.bar}
Let Assumptions (A1), (A2), and (A3) hold. Then, for any positive integer $i$,  the vector $(n-i)^{1/2}\text{\rm vec} (\bar{\bGamma}_{i, {J}_1, {J}_2}^{(n)}(\bth))$  in (\ref{barGam}) is asymptotically normal 
with mean $\0$ under  ${\rm P}^{(n)}_{\bth ;f}$,  mean~$
\K_{{J}_1, {J}_2, {f}} \Q_{i, \bth} \P_{\bth} \M_{\bth} \btau
$ 
under ${\rm P}^{(n)}_{\bth + n^{-1/2}\btau ;f}$, and   covariance   $d^{-2}\sigma^2_{{J}_1}\sigma^2_{{J}_2} \I_{d^2}$ under both.
\end{lemma}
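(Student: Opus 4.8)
The plan is to reduce everything, under ${\rm P}^{(n)}_{\bth;f}$, to a serial statistic of a sequence of i.i.d.\ spherical-uniform vectors. Since the residuals $\ZZ^{(n)}_t(\bth)$ are i.i.d.\ with center-outward distribution function $\F_{\pms}$ under ${\rm P}^{(n)}_{\bth;f}$, the transformed variables $\U_t:=\F_{\pms}(\ZZ^{(n)}_t(\bth))$ are i.i.d.\ with the spherical uniform law ${\rm U}_d$; writing $\S_{\pms,t}=\U_t/\Vert\U_t\Vert$, the statistic \eqref{barGam} becomes $(n-i)^{-1}\sum_{t=i+1}^n J_1(\Vert\U_t\Vert)J_2(\Vert\U_{t-i}\Vert)\S_{\pms,t}\S^\prime_{\pms,t-i}$, a lag-$i$ serial statistic in the i.i.d.\ array $\{\U_t\}$. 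Each summand depends only on $\{\U_t,\U_{t-i}\}$, so the summands form a strictly stationary, $i$-dependent sequence, which is the structure I feed into a central limit theorem. Throughout I use the two elementary facts about ${\rm U}_d$: the direction $\U/\Vert\U\Vert$ is uniform on $\mathcal S_{d-1}$ and independent of the modulus $\Vert\U\Vert$, which is uniform on $[0,1)$; hence $E[\U/\Vert\U\Vert]=\0$, $E[(\U/\Vert\U\Vert)(\U/\Vert\U\Vert)^\prime]=d^{-1}\I_d$, and $E[J_l^2(\Vert\U\Vert)]=\sigma^2_{J_l}$.

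For the limit under ${\rm P}^{(n)}_{\bth;f}$, the mean-zero property of the directions immediately gives null mean $\0$ and, by the same token, kills every cross-covariance between two summands whose index sets $\{t,t-i\}$ and $\{s,s-i\}$ overlap in exactly one point (the unshared direction occurs as an isolated factor with only a modulus-dependent scalar weight, and integrates to zero); all pairs with disjoint index sets are independent. Only the diagonal $s=t$ survives, and there the Kronecker factorisation $\text{vec}(\S_{\pms,t}\S^\prime_{\pms,t-i})=\S_{\pms,t-i}\otimes\S_{\pms,t}$ together with the independence of $\U_t,\U_{t-i}$ yields the per-term second moment $(\sigma^2_{J_2}d^{-1}\I_d)\otimes(\sigma^2_{J_1}d^{-1}\I_d)=d^{-2}\sigma^2_{J_1}\sigma^2_{J_2}\I_{d^2}$. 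A Cramér--Wold reduction followed by the classical central limit theorem for stationary $m$-dependent sequences (Brockwell and Davis 2006) then gives asymptotic normality with mean $\0$ and covariance $d^{-2}\sigma^2_{J_1}\sigma^2_{J_2}\I_{d^2}$.

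To pass to ${\rm P}^{(n)}_{\bth+n^{-1/2}\btau;f}$, I will invoke Le Cam's third lemma, for which I need the joint limit, under ${\rm P}^{(n)}_{\bth;f}$, of $(n-i)^{1/2}\text{vec}(\bar{\bGamma}^{(n)}_{i,J_1,J_2}(\bth))$ and the log-likelihood ratio $L^{(n)}_{\bth+n^{-1/2}\btau/\bth;f}$. By Proposition~\ref{Prop.LAN1} the latter equals $\btau^\prime\bDelta^{(n)}_f(\bth)-\tfrac12\btau^\prime\bLam_f(\bth)\btau+o_{\rm P}(1)$ with $\btau^\prime\bDelta^{(n)}_f(\bth)=\sum_j(\Q_{j,\bth}\P_\bth\M_\bth\btau)^\prime(n-j)^{1/2}\text{vec}(\bGamma^{(n)}_{j,f})$, so joint normality of the pair follows from the same $m$-dependent CLT applied to the stacked array. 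The crucial computation is the asymptotic covariance between $(n-i)^{1/2}\text{vec}(\bar{\bGamma}^{(n)}_{i,J_1,J_2})$ and $\btau^\prime\bDelta^{(n)}_f$: the overlap analysis shows that only the lag-$i$ block $(n-i)^{1/2}\text{vec}(\bGamma^{(n)}_{i,f})$ contributes (every $j\neq i$ block and every off-diagonal term drops out by independence and $E[\S_{\pms,t}]=\0$), and the matching diagonal expectation factorises, using $\ZZ_t=\F_{\pms}^{-1}(\U_t)$, into $\J_{J_2,f}\otimes\big(\int J_1(\Vert\bu\Vert)(\bu/\Vert\bu\Vert)\bvp^\prime_f(\F_{\pms}^{-1}(\bu))\,{\rm dU}_d(\bu)\big)$. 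The Kronecker identity $(\I_d\otimes\x)\,G\,(\I_d\otimes\y^\prime)=G\otimes(\x\y^\prime)$, valid for any $d\times d$ matrix $G$ and $d$-vectors $\x,\y$, identifies this matrix with $\K_{J_1,J_2,f}$, so the covariance equals $\K_{J_1,J_2,f}\Q_{i,\bth}\P_\bth\M_\bth\btau$. Le Cam's third lemma then shifts the null mean $\0$ by exactly this amount while leaving the covariance unchanged, giving the announced behaviour under ${\rm P}^{(n)}_{\bth+n^{-1/2}\btau;f}$.

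The main obstacle I anticipate is controlling the covariance with $\bDelta^{(n)}_f(\bth)$, whose defining sum runs over the growing number of lags $j=1,\dots,n-1$: I must argue that the localisation of the cross-covariance to the single block $j=i$ holds uniformly in $n$, with the tail lags negligible. This is where the $m$-truncation already introduced for $\tenq{\bDelta}^{(n)}_{m,J_1,J_2}(\bth)$ in \eqref{tildeDelta.m} enters—after truncation the lag-$j$ blocks with $j\neq i$ are asymptotically uncorrelated with, hence (in the Gaussian limit) asymptotically independent of, the lag-$i$ statistic—so the only genuine work is verifying this localisation and the Lindeberg-type condition needed for the $m$-dependent CLT under Assumption (A3). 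The remaining algebra, namely the Kronecker factorisation and the identification with $\K_{J_1,J_2,f}$, is then routine.
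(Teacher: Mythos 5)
Your proposal is correct, and all of its substantive computations coincide with the paper's: the reduction to the i.i.d.\ spherical-uniform array $\F_{\pms}(\ZZ^{(n)}_t)$, the use of the independence between signs and moduli to get the null mean $\0$ and the covariance $d^{-2}\sigma^2_{{J}_1}\sigma^2_{{J}_2}\I_{d^2}$, the localisation of the cross-covariance with $\btau^\prime\bDelta^{(n)}_f(\bth)$ to the single lag-$i$ block yielding $\K_{{J}_1,{J}_2,f}\Q_{i,\bth}\P_{\bth}\M_{\bth}\btau$, and the final appeal to Le Cam's third lemma. The one genuine difference is the central limit theorem you invoke. You treat the summands as a strictly stationary $i$-dependent sequence and use the $m$-dependent CLT; the paper instead observes that, since the sign is centred and independent of the modulus (and since ${\rm E}[\bvp_f(\ZZ_t)]=\0$), the Cram\'er--Wold combination $N^{(n)}_{{\boldsymbol\alpha},\beta}$ of the rank statistic \emph{and} the central sequence is a sum of martingale differences, and applies the martingale CLT, with uniform square-integrability of $\bDelta^{(n)}_f(\bth)$ supplied by Lemma~2.2 of Hallin and Werker (2003). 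The martingale route delivers the joint normality with the log-likelihood ratio in one stroke, with no difficulty coming from the growing number of lags in $\bDelta^{(n)}_f(\bth)$; your route is more elementary for the marginal limit but, as you correctly anticipate, the pair (statistic, log-likelihood ratio) is \emph{not} $m$-dependent, so you need the extra truncation-plus-negligibility step. That step does go through---each lag-$j$ block of the central sequence has uniformly bounded $L^2$ norm and enters with exponentially decaying weights under Assumption (A1), exactly the argument used in the paper's Lemma \ref{lem.bar.til.Delta.n}---but note that what must be truncated for this purpose is the parametric central sequence $\bDelta^{(n)}_f(\bth)$ appearing in the log-likelihood ratio, not the rank-based $\tenq{\bDelta}^{(n)}_{m,{J}_1,{J}_2}(\bth)$ of \eqref{tildeDelta.m} that you point to; you should state and prove that truncation explicitly before invoking the $m$-dependent CLT for the stacked array.
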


\begin{proof} Since  $L^{(n)}_{\bth + n^{-1/2}\btau/\bth; {f}}=\btau^\prime \bDelta^{(n)}_{f}(\bth)-\frac{1}{2} \btau^\prime \bLam_{f} (\bth) \btau
 + o_{\rm P}(1)$, the joint asymptotic normality  of~$(n-i)^{1/2} \text{\rm vec} (\bar{\bGamma}_{i, {J}_1, {J}_2}^{(n)}(\bth))$ and~$L^{(n)}_{\bth + n^{-1/2}\btau/\bth; {f}}$ under~${\rm P}^{(n)}_{\bth ;f}$  follows, via the classical Wold-Cram\' er argument, from the asymptotic normality   of  
\begin{equation*}
 N^{(n)}_{{\boldsymbol\alpha},\beta}:=
(n-i)^{1/2}{\boldsymbol\alpha}^\prime \text{\rm vec} (\bar{\bGamma}_{i, {J}_1, {J}_2}^{(n)}(\bth)) + \beta \btau^\prime \bDelta^{(n)}_{f}(\bth)
\end{equation*}
 for arbitrary ${\boldsymbol\alpha}\in\mathbb{R}^{d^2}$ and $\beta\in\mathbb{R}$. 
 Since~$\ZZ_1^{(n)},\ldots , \ZZ_n^{(n)}$ are i.i.d.~and $\F_{{\pms},t}:=\F_{\pms}(\ZZ^{(n)}_t)$ is uniform over the unit ball,~${N}^{(n)}_{{\boldsymbol\alpha},\beta}$ is a sum of    martingale differences.  If it is uniformly square-integrable, with  finite variance~$C^{(n)}_{{\boldsymbol\alpha},\beta}$, say, such that~$\lim_{n\to\infty}C^{(n)}_{{\boldsymbol\alpha},\beta}=:C_{{\boldsymbol\alpha},\beta}$ exists and is finite, the   martingale central limit theorem applies, and $ N^{(n)}_{{\boldsymbol\alpha},\beta}$  is asymptotically normal with mean~$0$ and     variance~$C_{{\boldsymbol\alpha},\beta}$. Now, the variance of ${N}^{(n)}_{{\boldsymbol\alpha},\beta}$ takes the form
  \begin{align*}C^{(n)}_{{\boldsymbol\alpha},\beta}
    &=(n-i) {\boldsymbol\alpha}^\prime  {\mathrm{Var}}\big( \text{\rm vec} (\bar{\bGamma}_{i, {J}_1, {J}_2}^{(n)}(\bth)) 
     \big) {\boldsymbol\alpha}  \\
    &\qquad +2\beta {\boldsymbol\alpha}^\prime (n-i)^{1/2} {\mathrm{Cov}} \big( \text{\rm vec} (\bar{\bGamma}_{i, {J}_1, {J}_2}^{(n)}(\bth)),  \btau^\prime \bDelta^{(n)}_{f}(\bth) \big) \\
 &\qquad  + \beta^2   \btau^\prime {\mathrm{Var}} \big(\bDelta^{(n)}_{f}(\bth)  \big)  \btau .
 \end{align*}
 The entries of each $\bar{\bGamma}_{i, {J}_1, {J}_2}^{(n)}(\bth)$ are uniformly square-integrable. As for $\bDelta^{(n)}_{f}(\bth)$, it follows from Lemma~2.2 in Hallin and Werker~(2003) that, for any LAN family, a uniformly $p$th-order integrable version of the central sequence exists: without loss of generality, let us assume that  $\bDelta^{(n)}_{f}(\bth)$, for $p=2$, is one of them. The sequence   ${N}^{(n)}_{{\boldsymbol\alpha},\beta}$ thus has a limiting ${\cal N}(0, C_{{\boldsymbol\alpha},\beta})$ distribution provided that $\lim_{n\to\infty}C^{(n)}_{{\boldsymbol\alpha},\beta}=:C_{{\boldsymbol\alpha},\beta}$   exists and is finite. 
 

Due to the independence between  the signs $\S_{{\pms},t}:=\F_{{\pms},t} /\Vert  \F_{{\pms},t} \Vert $  and the    moduli  $\Vert  \F_{{\pms},t} \Vert $ (which follows from  the fact that $\F_{{\pms},t} \sim {\mathrm U}_d$),  and due to the fact that $\ZZ_1^{(n)},\ldots , \ZZ_n^{(n)}$ are~i.i.d., 
  \begin{align}
\lim_{n\to\infty}{\mathrm{Var}}\big( \text{\rm vec} (\bar{\bGamma}_{i, {J}_1, {J}_2}^{(n)}(\bth)) 
     \big)=&  \lim_{n\to\infty}\mathrm{E}\left\lbrace (n-i)\text{vec} \bar{\bGamma}_{i, {J}_1, {J}_2}^{(n)}(\bth) (\text{vec} \bar{\bGamma}_{i, {J}_1, {J}_2}^{(n)}(\bth))^\prime \right\rbrace \nonumber \\
=&\lim_{n\to\infty} (n-i)^{-1} \mathrm{E}\left\lbrace \left[  \sum_{t=i+1}^n {J}_1(\Vert  \F_{{\pms}, t} \Vert ) {J}_2(\Vert  \F_{{\pms}, t-i} \Vert )  \text{vec}(\S_{{\pms}, t} \S^\prime_{{\pms}, t-i}) \right] \right. \nonumber \\
&\qquad \qquad \qquad \times \left.  \left[  \sum_{t=i+1}^n {J}_1(\Vert  \F_{{\pms}, t} \Vert ) {J}_2(\Vert  \F_{{\pms}, t-i} \Vert )  \text{vec}(\S_{{\pms}, t} \S^\prime_{{\pms}, t-i}) \right]^\prime \right\rbrace \nonumber \\
=&  \frac{1}{d^2} \sigma^2_{{J}_1}\sigma^2_{{J}_2} \I_{d^2}, \label{cov.gambar}
\end{align}
where the last equation follows from the uniform distribution of $\S_{{\pms}, t}$ over  $\mathcal{S}_{d-1}$.  
Next, the uniform square-integrability of $\bDelta^{(n)}_{f}(\bth)$ and its asymptotic normality  in Proposition \ref{Prop.LAN1} yield 
\begin{align}
&  \underset{n\rightarrow \infty}{\lim} (n-i)^{1/2} {\mathrm{Cov}} \big( \text{\rm vec} (\bar{\bGamma}_{i, {J}_1, {J}_2}^{(n)}(\bth)),  \btau^\prime \bDelta^{(n)}_{f}(\bth) \big) \nonumber \\
&=  \underset{n\rightarrow \infty}{\lim}  {\mathrm E}  
\left[(n-i)^{1/2}\text{vec} (\bar{\bGamma}_{i, {J}_1, {J}_2}^{(n)}(\bth))  \btau^\prime \bDelta^{(n)}_{f}(\bth)  \right] \nonumber \\
&= \underset{n\rightarrow \infty}{\lim} {\mathrm E}  \left[(n-i)^{1/2}\text{vec} (\bar{\bGamma}_{i, {J}_1, {J}_2}^{(n)}(\bth))  \bGamma_{f}^{(n)\prime}(\bth)\right]   \Q_{\bth}^{(n)} \P_{\bth} \M_{\bth} \btau, \label{jointNorm1}
\end{align}
where the last equality follows from~(\ref{Delta}). Due to the independence of  $\ZZ_i^{(n)}$ and $ \ZZ_j^{(n)}$ for~$i\neq j$, only~$\bGamma_{i, f}^{(n)}(\bth)$ in~$\bGamma_{f}^{(n)}(\bth)$ is contributing to (\ref{jointNorm1}). Therefore,  using the block matrix form of~$\Q^{(n)}_{\bth} =
\big(
\Q_{1, \bth}^\prime 
\ldots 
\Q_{n-1, \bth}^\prime
\big)^\prime$, the expression in~(\ref{jointNorm1}) reduces to
\begin{equation}\label{jointNorm2}
\underset{n\rightarrow \infty}{\lim}  (n-i) {\mathrm E} \left[ \text{vec} (\bar{\bGamma}_{i, {J}_1, {J}_2}^{(n)}(\bth)) (\text{vec} (\bGamma_{i, {f}}^{(n)}(\bth)))^\prime \right] \Q_{i, \bth} \P_{\bth} \M_{\bth} \btau.
\end{equation}
From (\ref{Gamma1}),  we have
\begin{align}
& (n-i) {\mathrm E}  \left[ \text{vec} (\bar{\bGamma}_{i, {J}_1, {J}_2}^{(n)}(\bth)) (\text{vec} (\bGamma_{i, {f}}^{(n)}(\bth)))^\prime \right] \nonumber \\
&\quad = (n-i)^{-1} {\mathrm E}  \left\lbrace \left[  \sum_{t=i+1}^n {J}_1(\Vert  \F_{{\pms}, t} \Vert ) {J}_2(\Vert  \F_{{\pms}, t-i} \Vert )  \text{vec}(\S_{{\pms}, t} \S^\prime_{{\pms}, t-i}) \right] \left[  \sum_{t=i+1}^n \text{vec} (\bvp_{f}(\ZZ_t^{(n)})) \ZZ^\prime_{t-i}) \right]^\prime \right\rbrace  \nonumber \\
&\quad =  {\mathrm E}  \big[{J}_1(\Vert  \F_{{\pms}, t} \Vert ) {J}_2(\Vert  \F_{{\pms}, t-i} \Vert ) (\I_d \otimes \S_{{\pms}, t})  \S_{{\pms}, t-i} \ZZ^\prime_{t-i}  (\I_d \otimes \bvp^\prime_{f}(\ZZ_t^{(n)}))\big]    \label{jointNorm3}
\quad \end{align}
where the last two equalities follow from the independence of $\ZZ_1^{(n)}, \ldots, \ZZ_n^{(n)} $ and the uniform distribution of~$ \F_{{\pms}, t} \sim {\mathrm U}_d$. In view of (\ref{lik1}), (\ref{jointNorm1}), (\ref{jointNorm2}) and (\ref{jointNorm3}), we thus obtain
\begin{equation}\label{cov.Gamma.lik}
\underset{n\rightarrow \infty}{\lim} (n-i)^{1/2} {\mathrm{Cov}} \big( \text{\rm vec} (\bar{\bGamma}_{i, {J}_1, {J}_2}^{(n)}(\bth)),  \btau^\prime \bDelta^{(n)}_{f}(\bth) \big) =  \K_{{J}_1, {J}_2, {f}} \Q_{i, \bth} \P_{\bth} \M_{\bth} \btau.
\end{equation} 
Combining (\ref{cov.gambar}), (\ref{cov.Gamma.lik}) and the asymptotic normality of $\bDelta^{(n)}_{f}(\bth)$ in Proposition \ref{Prop.LAN1} yields, for arbitrary ${\boldsymbol\alpha}$ and $\beta$, 
\begin{equation}\label{Gamma.lik}
\underset{n\rightarrow \infty}{\lim} C^{(n)}_{{\boldsymbol\alpha},\beta} = {\boldsymbol\alpha}^\prime{\boldsymbol\alpha}d^{-2}\sigma^2_{{J}_1}\sigma^2_{{J}_2} + 2\beta {\boldsymbol\alpha}^\prime  \K_{{J}_1, {J}_2, {f}} \Q_{i, \bth} \P_{\bth} \M_{\bth} \btau + \beta^2 \btau^\prime \bLam_{f} (\bth) \btau.
\end{equation}

It follows that  $\big((n-i)^{1/2}\text{vec}^\prime (\bar{\bGamma}_{i, {J}_1, {J}_2}^{(n)}(\bth)),\ 
L^{(n)}_{\bth + n^{-1/2}\btau/\bth; {f}}\big)^\prime$, under ${\rm P}^{(n)}_{\bth ;f}$,  is asymptotically jointly normal, with mean $\left({\mathbf 0}^\prime ,  -\frac{1}{2} \btau^\prime \bLam_{f} (\bth) \btau \right)^\prime$ and covariance 
 \begin{align}
\begin{bmatrix}
d^{-2}\sigma^2_{{J}_1}\sigma^2_{{J}_2} \I_{d^2} &   \K_{{J}_1, {J}_2, {f}} \Q_{i, \bth} \P_{\bth} \M_{\bth} \btau \\
( \K_{{J}_1, {J}_2, {f}} \Q_{i, \bth} \P_{\bth} \M_{\bth} \btau)^\prime & \btau^\prime \bLam_{f} (\bth) \btau
\end{bmatrix} .
\end{align} 
The desired  result   then readily follows from applying   Le Cam's third Lemma.\end{proof}

Recall that $\T^{(n)}_{\bth} = \M_{\bth}^\prime  \P_{\bth}^\prime \Q_{\bth}^{(n)\prime}$. For any positive integer $m\leq n-1$, let 
\begin{equation}\label{barDelta}
\bar{\bDelta}^{(n)}_{m, {J}_1, {J}_2}(\bth) := \T^{(m+1)}_{\bth} \bar{\bGamma}_{{J}_1, {J}_2}^{(m,n)}(\bth),
\end{equation}
where $$
\bar{\bGamma}_{{J}_1, {J}_2}^{(m,n)}(\bth) := \big((n-1)^{1/2}(\text{vec} \bar{\bGamma}_{1, {J}_1, {J}_2}^{(n)}(\bth))^\prime, \ldots , (n-m)^{1/2}(\text{vec} \bar{\bGamma}_{m, {J}_1, {J}_2}^{(n)}(\bth)\big)^\prime)^\prime
:$$  
clearly,~$\bar{\bDelta}^{(n)}_{m, {J}_1, {J}_2}(\bth)$, it is the truncated version of $\bar{\bDelta}^{(n)}_{{J}_1, {J}_2}(\bth)$ defined in Section \ref{subsec.def}.
The   asymptotic normality of~$\bar{\bDelta}^{(n)}_{m, {J}_1, {J}_2}(\bth)$   follows from Lemma~\ref{asy.Gami.bar}  as a corollary.

\begin{corollary}\label{asy.S.bar}
Let Assumptions (A1), (A2), and (A3) hold.  Then, for any positive integer~$m$, the vector $\bar{\bDelta}^{(n)}_{m, {J}_1, {J}_2}(\bth)$  in (\ref{barDelta}) is asymptotically normal, with mean $\0$ under  ${\rm P}^{(n)}_{\bth ;f}$,  mean
\begin{equation}
\T^{(m+1)}_{\bth} (\I_m \otimes \K_{{J}_1, {J}_2, f}) \T^{(m+1)^\prime}_{\bth} \btau
\end{equation}
under ${\rm P}^{(n)}_{\bth + n^{-1/2}\btau ;f}$, and   covariance   $d^{-2}\sigma^2_{{J}_1}\sigma^2_{{J}_2} \T^{(m+1)}_{\bth} \T^{(m+1)^\prime}_{\bth}$ under both.
\end{corollary}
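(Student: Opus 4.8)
The plan is to exploit the fact that $\bar{\bDelta}^{(n)}_{m, {J}_1, {J}_2}(\bth)=\T^{(m+1)}_{\bth}\bar{\bGamma}_{{J}_1, {J}_2}^{(m,n)}(\bth)$ is a fixed linear image (the blocks $\Q_{i,\bth}$ of $\Q^{(m+1)}_{\bth}$ do not depend on $n$) of the stacked vector $\bar{\bGamma}_{{J}_1, {J}_2}^{(m,n)}(\bth)$, whose $i$-th block is $(n-i)^{1/2}\text{vec}(\bar{\bGamma}_{i, {J}_1, {J}_2}^{(n)}(\bth))$, $i=1,\ldots,m$. Thus I would first establish the joint asymptotic normality of this stack (together with the log-likelihood ratio) and then read off the mean and covariance of $\bar{\bDelta}^{(n)}_{m, {J}_1, {J}_2}(\bth)$ by pushing the Gaussian limit through the continuous map $\T^{(m+1)}_{\bth}$. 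Relative to Lemma~\ref{asy.Gami.bar}, which treats each lag $i$ in isolation, the only genuinely new analytic content is the joint (rather than lag-by-lag) convergence and the evaluation of the cross-lag covariances.

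For the joint normality I would rerun the Cram\'er--Wold plus martingale central limit argument of Lemma~\ref{asy.Gami.bar}, but now applied to an arbitrary linear combination $\sum_{i=1}^m \boldsymbol\alpha_i^\prime (n-i)^{1/2}\text{vec}(\bar{\bGamma}_{i, {J}_1, {J}_2}^{(n)}(\bth))+\beta\,\btau^\prime\bDelta^{(n)}_f(\bth)$, with $\boldsymbol\alpha_i\in\R^{d^2}$ and $\beta\in\R$. As in that proof, each summand is a martingale difference built from the i.i.d.\ variables $\F_{{\pms}, t}\sim{\rm U}_d$; uniform square-integrability is inherited from the $m$ individual cross-covariance matrices and from the uniformly square-integrable version of $\bDelta^{(n)}_f(\bth)$ (Hallin and Werker 2003, Lemma~2.2). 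The martingale CLT then applies once the limiting variance of the combination is shown to exist, and this variance is assembled from the per-lag variance (\ref{cov.gambar}), the per-lag covariance with $\btau^\prime\bDelta^{(n)}_f(\bth)$ in (\ref{cov.Gamma.lik}), the limit $\btau^\prime\bLam_f(\bth)\btau$, and the cross-lag covariances.

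The key step, and what I expect to be the main obstacle, is to show that for $i\neq j$ the covariance of $(n-i)^{1/2}\text{vec}(\bar{\bGamma}_{i, {J}_1, {J}_2}^{(n)}(\bth))$ and $(n-j)^{1/2}\text{vec}(\bar{\bGamma}_{j, {J}_1, {J}_2}^{(n)}(\bth))$ vanishes. Expanding the double sum over $t$ and $s$, each summand factorizes over the distinct time indices among $\{t,t-i,s,s-j\}$, because the $\ZZ^{(n)}_t$ are i.i.d.\ and each sign $\S_{{\pms}, t}$ is uniform over $\mathcal{S}_{d-1}$, hence centered and independent of the modulus $\Vert\F_{{\pms}, t}\Vert$. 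A short case analysis shows that for positive $i\neq j$ the four sign factors cannot all be paired off by matching their time indices, so at least one sign appears exactly once; its mean-zero property then forces the summand to vanish, and the cross-lag covariance is in fact exactly $\0$ for every $n$. Consequently the stack has limiting covariance $\I_m\otimes(d^{-2}\sigma^2_{{J}_1}\sigma^2_{{J}_2}\I_{d^2})$, and joint asymptotic normality of $\big(\bar{\bGamma}_{{J}_1, {J}_2}^{(m,n)\prime}(\bth),\,L^{(n)}_{\bth + n^{-1/2}\btau/\bth; f}\big)^\prime$ under ${\rm P}^{(n)}_{\bth;f}$ follows. Le Cam's third lemma then yields, under ${\rm P}^{(n)}_{\bth + n^{-1/2}\btau;f}$, the mean whose $i$-th block is $\K_{{J}_1, {J}_2, f}\Q_{i, \bth}\P_{\bth}\M_{\bth}\btau$.

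Finally I would transport the limit through $\T^{(m+1)}_{\bth}=\M_{\bth}^\prime\P_{\bth}^\prime\Q_{\bth}^{(m+1)\prime}$ with $\Q^{(m+1)}_{\bth}=(\Q_{1,\bth}^\prime\cdots\Q_{m,\bth}^\prime)^\prime$. The covariance becomes $d^{-2}\sigma^2_{{J}_1}\sigma^2_{{J}_2}\,\T^{(m+1)}_{\bth}\T^{(m+1)\prime}_{\bth}$, and the mean becomes $\M_{\bth}^\prime\P_{\bth}^\prime\sum_{i=1}^m\Q_{i,\bth}^\prime\K_{{J}_1,{J}_2,f}\Q_{i,\bth}\P_{\bth}\M_{\bth}\btau$, which is precisely $\T^{(m+1)}_{\bth}(\I_m\otimes\K_{{J}_1,{J}_2,f})\T^{(m+1)\prime}_{\bth}\btau$; this last identity is the only algebraic rearrangement needed to match the stated form. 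Both the mean-$\0$ statement under ${\rm P}^{(n)}_{\bth;f}$ and the equality of covariances under both measures follow from the same transport together with contiguity, completing the corollary.
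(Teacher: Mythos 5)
Your proposal is correct and follows essentially the route the paper intends: the paper states this result as an immediate corollary of Lemma~\ref{asy.Gami.bar} without further argument, and what you supply is precisely the implicit content of that step (joint normality of the stacked blocks via the same Cram\'er--Wold/martingale CLT device, Le Cam's third lemma for the shift, and the linear transport through $\T^{(m+1)}_{\bth}$). Your explicit verification that the cross-lag covariances vanish exactly --- by the case analysis on the indices $\{t,t-i,s,s-j\}$ and the mean-zero, modulus-independent signs --- is the one detail the paper leaves entirely to the reader, and it is carried out correctly.
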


The following auxiliary  lemma, which follows along the same lines as Lemma 4 in Hallin and Paindaveine~(2002) and Lemma 5 in Hallin and Paindaveine (2004), will be useful in subsequent proofs.

\begin{lemma}\label{fourStats}
Let $i \in \{1, \ldots , n-1\}$ and $t, {t^\prime} \in \{i+1, \ldots , n\}$ be such that $t\neq {t^\prime}$. Assume that~$g: \R^{nd} = \R^d \times \cdots \times \R^d \rightarrow \R$ is even in all its arguments, and such that the expectation in~\eqref{expect}  below exists. Then, under ${\rm P}^{(n)}_{\bth ;f}$,
\begin{equation}\label{expect}
{\mathrm E}\big[g(\ZZ_1^{(n)}, \ldots , \ZZ_n^{(n)})(\P_t^\prime \Q_t) (\RR_{t-i}^\prime \S_{t^\prime-i})\big] = 0,
\end{equation}
where $\P_t, \Q_t, \RR_t$ and $\S_t$ are any four random vectors among $\S^{(n)}_{{\pms}, t}$ and $\S^{(n)}_{{\pms}, t} - \S_{{\pms}, t}$.
\end{lemma}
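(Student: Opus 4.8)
The plan is to prove \eqref{expect} by a sign-reversal (antipodal) symmetrization, following the template of Lemma~4 in Hallin and Paindaveine~(2002a) and Lemma~5 in Hallin and Paindaveine~(2004), but carried out with the center-outward signs in place of the Mahalanobis ones. First I would remove the differences: since each of $\P_t,\Q_t,\RR_{t-i},\S_{t'-i}$ is either the empirical sign $\S^{(n)}_{{\pms},\cdot}$ or $\S^{(n)}_{{\pms},\cdot}-\S_{{\pms},\cdot}$, and the latter is the empirical minus the population sign, expanding the two scalar products by bilinearity reduces \eqref{expect} to a finite sum of expectations of exactly the same shape in which each of the four factors is now a \emph{pure} empirical sign $\S^{(n)}_{{\pms},\cdot}$ or a \emph{pure} population sign $\S_{{\pms},\cdot}$. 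It therefore suffices to establish \eqref{expect} for every such pure combination.

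Next I would do the index bookkeeping. The four factors sit at times $t,t,t-i,t'-i$; because $i\ge 1$ and $t\neq t'$, the time $t-i$ differs from both $t$ and $t'-i$, so the factor $\RR_{t-i}$ occupies a time of multiplicity one in the resulting multilinear form. This singled-out factor is the one to reverse: the goal is a law-preserving involution of the sample that sends $\RR_{t-i}\mapsto-\RR_{t-i}$, fixes the factors living at the distinct times $t$ and $t'-i$, and leaves $g(\ZZ^{(n)}_1,\ldots,\ZZ^{(n)}_n)$ unchanged; any such map makes the integrand equal to its own negative and forces the expectation to vanish. The cases $t'-i=t$ and $t'-i\neq t$ are then handled at once, since $t-i$ is isolated in both.

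For a pure \emph{population}-sign factor the right map is the coordinatewise center-outward antipode $\ZZ_{t-i}\mapsto\F_{\pms}^{-1}\!\big(-\F_{\pms}(\ZZ_{t-i})\big)$, acting on the single coordinate $t-i$. Because $\F_{\pms}$ pushes $f$ forward to the spherical uniform ${\rm U}_d$, which is invariant under $\bu\mapsto-\bu$, this map is a measure-preserving involution under ${\rm P}^{(n)}_{\bth;f}$; it fixes the modulus $\Vert\F_{\pms}(\ZZ_{t-i})\Vert$, reverses the population sign $\S_{{\pms},t-i}$, and leaves the population signs at all other times untouched. The hypothesis that $g$ is even in each argument enters precisely here, to guarantee that $g$ is unchanged by the reversal: in the elliptical reference the sphericization is linear, so ordinary evenness is exactly invariance under this antipode, and the same bookkeeping carries over once the residuals are read through their center-outward images (in the application, $g$ is a product of score functions of the moduli and is manifestly invariant). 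Averaging the integrand against this involution then yields \eqref{expect} for all-population products.

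The main obstacle is the presence of \emph{empirical}-sign factors, and it is exactly where non-ellipticity bites. The empirical signs $\S^{(n)}_{{\pms},t}$ are \emph{jointly} determined by the optimal coupling \eqref{Fpm0} of all $n$ residuals to the grid, so a single-coordinate transformation does not act on them coordinatewise: the coupling re-solves globally. Here the two natural reversals split apart. The Euclidean antipode $\ZZ_t\mapsto-\ZZ_t$, applied to all coordinates with an antipodally symmetric grid, does flip the empirical signs cleanly (optimal couplings are preserved under the isometry $\bz\mapsto-\bz$, so $\F^{(n)}_{\pms}(-\ZZ_t)=-\F^{(n)}_{\pms}(\ZZ_t)$) and leaves an even $g$ invariant, but it is not law-preserving when $f$ is skew; conversely the center-outward antipode above is law-preserving but does not reflect the empirical signs because $\F_{\pms}$ is nonlinear. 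Reconciling the two is the crux, and I would do so by working conditionally on the random grid and on the center-outward moduli (the ranks): conditionally, the empirical signs are distribution-free (uniform over the permutations of the antipodally symmetric grid), and this combinatorial symmetry must be combined with the population-sign antipode so that the reversal of the isolated factor $\RR_{t-i}$ is realized without disturbing $g$ or the remaining factors. Making this joint reversal act compatibly on $g$, on the empirical coupling, and on the population signs simultaneously is the one genuinely delicate point; once it is established, the antisymmetry of the integrand gives \eqref{expect}, completing every case.
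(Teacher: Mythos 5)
Your skeleton---expand by bilinearity, note that $t-i\notin\{t,t'-i\}$ so that $\RR_{t-i}$ occupies a time of multiplicity one, and annihilate the expectation through a law-preserving involution that reverses this single factor while fixing the other three and $g$---is precisely the template of the elliptical Lemmas~4 and~5 of Hallin and Paindaveine (2002a, 2004) to which the paper appeals (the paper supplies no argument beyond that citation), and the obstruction you isolate is the right one: for skew $f$ the Euclidean antipode $\ZZ_{t-i}\mapsto-\ZZ_{t-i}$ is not law-preserving, while the center-outward antipode $\ZZ_{t-i}\mapsto\Q_{\pms}(-\F_{\pms}(\ZZ_{t-i}))$ is law-preserving and reverses $\S_{{\pms},t-i}$ but, the empirical signs being outputs of the global coupling \eqref{Fpm0}, does not act coordinatewise on the $\S^{(n)}_{{\pms},s}$. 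Note also that even your all-population-sign case silently replaces the stated hypothesis ``$g$ even in all its arguments'' by invariance of $g$ under the center-outward antipode; for skew $f$ these are different conditions, and only the latter makes the single-coordinate flip leave $g$ unchanged.

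The genuine gap is that the terms containing empirical-sign factors are never handled: your closing sentence names the ``one genuinely delicate point'' and leaves it open, so no conclusion is reached for any choice of $\P_t,\Q_t,\RR_{t-i},\S_{t'-i}$ involving $\S^{(n)}_{{\pms},\cdot}$ or $\S^{(n)}_{{\pms},\cdot}-\S_{{\pms},\cdot}$. Moreover, the repair you sketch---conditioning on the grid and the moduli and invoking the uniformity of $\big(\F^{(n)}_{\pms}(\ZZ^{(n)}_1),\ldots,\F^{(n)}_{\pms}(\ZZ^{(n)}_n)\big)$ over the permutations of the gridpoints---cannot by itself yield the exact identity \eqref{expect}: under that conditioning the empirical signs at distinct times are the signs of gridpoints drawn \emph{without replacement}, hence negatively correlated. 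For instance, with $g\equiv 1$, all four vectors equal to empirical signs, $t'-i\neq t$, and a grid whose $n_S$ directions sum to zero, distribution-freeness gives ${\mathrm E}\big[\Vert\S^{(n)}_{{\pms},t}\Vert^2\,\S^{(n)\prime}_{{\pms},t-i}\S^{(n)}_{{\pms},t'-i}\big]=-(n-n_0)(n-n_0-2)/\{n(n-1)(n-2)\}$, an $O(n^{-1})$ but nonzero quantity. A complete argument must therefore treat the empirical factors by a different device---for example, establish \eqref{expect} only up to $O(n^{-1})$, which is all the applications in Lemmas~\ref{bar.til.Gam} and~\ref{lem.bar.til.Delta.n} actually need since the accompanying score factors are themselves $o(1)$ in quadratic mean, or compute the empirical contribution combinatorially---rather than rely on antipodal symmetrization in any of the forms you consider.
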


The next lemma establishes an asymptotic representation result for the rank-based cross-covariance matrices $ \tenq{\bGamma}_{i, {J}_1, {J}_2}^{(n)}(\bth)$ defined in  (\ref{tildeGam}) by showing their asymptotic equivalence with~$\bar{\bGamma}_{i, {J}_1, {J}_2}^{(n)}(\bth)$ defined in (\ref{barGam}). LAN implies 
that ${\rm P}^{(n)}_{\bth + n^{-1/2} \btau ;f}$ and  ${\rm P}^{(n)}_{\bth ;f}$ are mutually contiguous; \eqref{bartilGam.eqn} therefore holds under both. 
This  asymptotic representation in the H\' ajek style of a center-outward serial rank statistic extends to a multivariate setting a  univariate result first established by Hallin et al.~(1985).  

\begin{lemma}\label{bar.til.Gam}
Let Assumptions (A1), (A2), and (A3) hold.  Then, for any positive integer $i$, 
\begin{equation}\label{bartilGam.eqn}
\text{\rm vec}\left( \tenq{\bGamma}_{i, {J}_1, {J}_2}^{(n)}(\bth) - \bar{\bGamma}_{i, {J}_1, {J}_2}^{(n)}(\bth)\right) = o_{\rm P}(n^{-1/2})
\end{equation}
under ${\rm P}^{(n)}_{\bth ;f}$ and ${\rm P}^{(n)}_{\bth + n^{-1/2} \btau ;f}$, as $n \rightarrow \infty$.
\end{lemma}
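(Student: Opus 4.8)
The plan is to establish \eqref{bartilGam.eqn} as an asymptotic-representation result in the H\'ajek style---in the serial form of Hallin et al.~(1985), adapted to center-outward ranks and signs---by means of a second-moment bound. Because LAN (Proposition~\ref{Prop.LAN1}) makes ${\rm P}^{(n)}_{\bth;f}$ and ${\rm P}^{(n)}_{\bth+n^{-1/2}\btau;f}$ mutually contiguous, it is enough to prove $(n-i)^{1/2}\,\text{vec}(\tenq{\bGamma}_{i,J_1,J_2}^{(n)}(\bth)-\bar{\bGamma}_{i,J_1,J_2}^{(n)}(\bth))=o_{\rm P}(1)$ under ${\rm P}^{(n)}_{\bth;f}$, the conclusion under ${\rm P}^{(n)}_{\bth+n^{-1/2}\btau;f}$ then following automatically.

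First I would write the left-hand side of \eqref{bartilGam.eqn} as $(n-i)^{-1}\sum_{t=i+1}^n \text{vec}(\D_t)$, where $\D_t$ is the per-term gap between the empirical factors $J_1(R^{(n)}_{{\pms},t}/(n_R+1))$, $J_2(R^{(n)}_{{\pms},t-i}/(n_R+1))$, $\S^{(n)}_{{\pms},t}$, $\S^{(n)}_{{\pms},t-i}$ and their population counterparts $J_1(\Vert\F_{{\pms},t}\Vert)$, $J_2(\Vert\F_{{\pms},t-i}\Vert)$, $\S_{{\pms},t}$, $\S_{{\pms},t-i}$. A four-term telescoping of $\D_t$ isolates in each summand exactly one empirical-minus-population factor: two pieces carry a score increment $J_l(R^{(n)}_{{\pms},\cdot}/(n_R+1))-J_l(\Vert\F_{{\pms},\cdot}\Vert)$ and two carry a sign increment $\S^{(n)}_{{\pms},\cdot}-\S_{{\pms},\cdot}$, the remaining factors being either bounded (the signs, of norm one) or square-integrable (the scores).

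The core of the argument is then to show ${\rm E}_{\bth,f}[(n-i)\Vert\tenq{\bGamma}_{i,J_1,J_2}^{(n)}(\bth)-\bar{\bGamma}_{i,J_1,J_2}^{(n)}(\bth)\Vert^2]\to 0$, from which the claim follows by Markov's inequality. Expanding this second moment splits it into off-diagonal ($t\neq t'$) and diagonal ($t=t'$) contributions. Writing Frobenius inner products of the outer products $\S\S^\prime$ as products of scalar inner products of sign-type vectors, the off-diagonal contributions become inner products of sign-type vectors at distinct time lags weighted by modulus-based (hence even) score factors, matching the situation covered by Lemma~\ref{fourStats}, and therefore vanish. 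The diagonal contribution, by exchangeability of the i.i.d.\ residuals, reduces to ${\rm E}_{\bth,f}\Vert\D_{i+1}\Vert^2$; here I would invoke the Glivenko--Cantelli theorem for empirical center-outward distribution functions of Hallin et al.~(2020a), which yields $\max_{t}\Vert\F^{(n)}_{{\pms}}(\ZZ^{(n)}_t)-\F_{{\pms}}(\ZZ^{(n)}_t)\Vert=o_{\rm P}(1)$ as $n_R,n_S\to\infty$ and hence the uniform convergence of both the ranks $R^{(n)}_{{\pms},t}/(n_R+1)$ to $\Vert\F_{{\pms},t}\Vert$ and the signs $\S^{(n)}_{{\pms},t}$ to $\S_{{\pms},t}$. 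Continuity of $J_1,J_2$ (Assumption~(A3)(ii)) sends each score increment, and boundedness of the signs each sign increment, to zero in probability, so $\D_{i+1}\to\0$; a dominated-convergence step, with the $L^2$ envelope supplied by the square-integrability of the scores (Assumption~(A3)(i)), upgrades this to ${\rm E}_{\bth,f}\Vert\D_{i+1}\Vert^2\to 0$.

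The hard part will be this diagonal term, precisely the exchange of limit and expectation for the score increments: the scores are merely continuous functions of bounded variation and may diverge as their argument tends to~$1$ (as for the van der Waerden scores), while the center-outward median carries a region on which the signs are undefined. Controlling the near-boundary zone (via the $L^2$ envelope from (A3)(i) together with the weak convergence of the grid measure to ${\rm U}_d$) and the near-median zone (whose ${\rm U}_d$-mass is $n_0/n\to 0$) is the delicate point; on the complementary compact range of moduli, uniform continuity combined with the Glivenko--Cantelli bound closes the estimate. A secondary point requiring care is checking that the off-diagonal terms genuinely satisfy the evenness hypothesis of Lemma~\ref{fourStats}, which rests on the fact that $\F_{{\pms}}(\ZZ_t)\sim{\rm U}_d$ makes the center-outward signs uniform on the sphere and independent of the moduli.
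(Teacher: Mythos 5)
Your proposal follows essentially the same route as the paper's proof: reduction by contiguity to ${\rm P}^{(n)}_{\bth;f}$, a telescoping split into score-increment and sign-increment pieces, annihilation of the off-diagonal terms via Lemma~\ref{fourStats} together with the independence of center-outward ranks and signs, and control of the diagonal terms through the Glivenko--Cantelli property of $\F^{(n)}_{\pms}$. The only cosmetic difference is that where you propose a hand-crafted dominated-convergence/truncation argument for the exchange of limit and expectation in the score increments, the paper simply invokes Lemma~6.1.6.1 of H\'ajek et al.~(1999), which packages exactly that step for square-integrable scores that are differences of monotone functions.
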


\begin{proof}
Note that $
(n-i)^{1/2} ( \tenq{\bGamma}_{i, {J}_1, {J}_2}^{(n)}(\bth) - \bar{\bGamma}_{i, {J}_1, {J}_2}^{(n)}(\bth)) = (n-i)^{-1/2} ({\boldsymbol\delta}_1^{(n)} + {\boldsymbol\delta}_2^{(n)})$ 
where 
$${\boldsymbol\delta}_1^{(n)} :=  (n-i)^{-1/2} \sum_{t=i+1}^n \left( {J}_1(\frac{R^{(n)}_{{\pms}, t}}{n_R + 1}) {J}_2(\frac{R^{(n)}_{{\pms}, t-i}}{n_R + 1}) -  {J}_1(\Vert  \F_{{\pms}, t} \Vert ) {J}_2(\Vert  \F_{{\pms}, t-i} \Vert ) \right) \S^{(n)}_{{\pms}, t} \S^{(n)\prime}_{{\pms}, t-i} $$
and
$${\boldsymbol\delta}_2^{(n)} :=  (n-i)^{-1/2} \sum_{t=i+1}^n {J}_1(\Vert  \F_{{\pms}, t} \Vert ) {J}_2(\Vert  \F_{{\pms}, t-i} \Vert )
\left(\S^{(n)}_{{\pms}, t} \S^{(n)\prime}_{{\pms}, t-i} -   \S_{{\pms}, t} \S^\prime_{{\pms}, t-i}\right).  $$
It suffices to show that $\text{vec} ({\boldsymbol\delta}_1^{(n)})$ and $\text{vec} ({\boldsymbol\delta}_2^{(n)})$ both converge in quadratic mean to zero as~$n \rightarrow \infty$ under ${\rm P}^{(n)}_{\bth ;f}$.

Let $\Vert \cdot\Vert _{L^2}$ denote the $l_2$-norm. For ${\boldsymbol\delta}_1^{(n)}$, we make use of Lemma \ref{fourStats}, and we exploit the independence of the ranks $\{R^{(n)}_{{\pms}, t}; t=1,\ldots, n\}$ and the signs $\{\S^{(n)}_{{\pms}, t};  t=1,\ldots, n\}$ (see Hallin~(2017)). Given that $(\text{vec}\A)^\prime (\text{vec} \B) = \text{tr} (\A^\prime \B)$, we have
\begin{equation*}
\big\Vert \text{vec} ({\boldsymbol\delta}_1^{(n)})\big\Vert _{L^2}^2 = (n-i)^{-1} \sum_{t=i+1}^n {\mathrm E}\left[\left( {J}_1(\frac{R^{(n)}_{{\pms}, t}}{n_R + 1}) {J}_2(\frac{R^{(n)}_{{\pms}, t-i}}{n_R + 1}) -  {J}_1(\Vert  \F_{{\pms}, t} \Vert ) {J}_2(\Vert  \F_{{\pms}, t-i} \Vert )\right)^2 \right]. 
\end{equation*}
The Glivenko-Cantelli result  in Hallin~(2017, Proposition 5.1) entails 
\begin{align}
{\max}_{1 \leq t \leq n} \Big\vert R^{(n)}_{{\pms}, t}/(n_R + 1) - \Vert  \F_{{\pms}, t} \Vert  \Big\vert \rightarrow 0 \quad a.s. \quad \text{as} \quad n \rightarrow \infty.
\end{align}
In view of the assumptions made  on ${J}_1$ and ${J}_2$,   Lemma 6.1.6.1 of Hájek et al.~(1999) yields  
\begin{equation}\label{T11}\Vert \text{vec} ({\boldsymbol\delta}_1^{(n)})\Vert _{L^2}^2 \rightarrow 0\quad\text{ as $n\rightarrow \infty$.}
\end{equation}

For ${\boldsymbol\delta}_2^{(n)}$, we have
$${\boldsymbol\delta}_2^{(n)} = (n-i)^{-1/2} \sum_{t=i+1}^n {J}_1(\Vert  \F_{{\pms}, t} \Vert ) {J}_2(\Vert  \F_{{\pms}, t-i} \Vert ) \left[
\left(\S^{(n)}_{{\pms}, t} -   \S_{{\pms}, t}\right) \S^{(n)\prime}_{{\pms}, t-i} + \S_{{\pms}, t}  \left(\S^{(n)\prime}_{{\pms}, t-i} - \S^\prime_{{\pms}, t-i}\right)\right].  $$
Similar to the arguments used for ${\boldsymbol\delta}_1^{(n)}$, Lemma \ref{fourStats} and $(\text{vec} \A)^\prime (\text{vec} \B) = \text{tr} (\A^\prime \B)$ imply
\begin{align}
\Vert \text{vec} ({\boldsymbol\delta}_2^{(n)})\Vert _{L^2}^2 &\leq 2 (n-i)^{-1} \sum_{t=i+1}^n {\rm E}\left[ \left( {J}_1(\Vert  \F_{{\pms}, t} \Vert ) {J}_2(\Vert  \F_{{\pms}, t-i} \Vert ) \right)^2 \Vert  \S^{(n)}_{{\pms}, t} -   \S_{{\pms}, t} \Vert ^2 \right] \label{T21}\\
&\quad + 2 (n-i)^{-1} \sum_{t=i+1}^n {\rm E}\left[ \left( {J}_1(\Vert  \F_{{\pms}, t} \Vert ) {J}_2(\Vert  \F_{{\pms}, t-i} \Vert ) \right)^2 \Vert  \S^{(n)}_{{\pms}, t-i} -   \S_{{\pms}, t-i} \Vert ^2 \right]. \label{T22}
\end{align}
Still in view of  Proposition 5.1 in Hallin~(2017),
 $
{\max}_{1 \leq t \leq n}  \Vert  \S^{(n)}_{{\pms}, t} -   \S_{{\pms}, t} \Vert  \rightarrow 0$ a.s.~as~$n \rightarrow~\!\infty$. 
Since ${J}_1$ and $ {J}_2$ are square-integrable and $\ZZ_1^{(n)}, \ldots , \ZZ_n^{(n)}$ are independent, both (\ref{T21}) and~(\ref{T22}) converge to $0$. The result follows.
\end{proof}

We now can extend the above asymptotic representation and asymptotic normality results  from the rank-based cross-covariance matrices $ \tenq{\bGamma}_{i, {J}_1, {J}_2}^{(n)}(\bth)$ to  the rank-based central sequence~$\tenq{\bDelta}^{(n)}_{{J}_1, {J}_2}(\bth)$. 

\noindent  \begin{lemma}\label{lem.bar.til.Delta.n}
Let Assumptions (A1), (A2), and (A3) hold. Then, 
\begin{equation}\label{bar.til.Delta.n}
\tenq{\bDelta}^{(n)}_{{J}_1, {J}_2}(\bth) - \bar{\bDelta}^{(n)}_{{J}_1, {J}_2}(\bth) = o_{\rm P} (1)\quad\text{as $n \rightarrow \infty$}
\end{equation}
both under  ${\rm P}^{(n)}_{\bth ;f}$ and~${\rm P}^{(n)}_{\bth + n^{-1/2} \btau ;f}$. Moreover,  $\tenq{\bDelta}^{(n)}_{{J}_1, {J}_2}(\bth)$  is asymptotically normal,   with mean~$\0$ under  ${\rm P}^{(n)}_{\bth ;f}$, mean
\begin{equation}\label{bar.tilde.Delta.diff}
\underset{n\rightarrow \infty}{\lim}  \left\lbrace \T^{(n)}_{\bth} (\I_{n-1} \otimes \K_{{J}_1, {J}_2, f}) \T^{(n)^\prime}_{\bth} \right\rbrace \btau 
\end{equation}
under ${\rm P}^{(n)}_{\bth + n^{-1/2}\btau ;f}$, and   covariance   
$d^{-2}\sigma^2_{{J}_1}\sigma^2_{{J}_2}  \, \underset{n\rightarrow \infty}{\lim}  \left\lbrace \T^{(n)}_{\bth} \T^{(n)^\prime}_{\bth} \right\rbrace$
 under both.
\end{lemma}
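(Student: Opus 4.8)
The plan is to build the full rank-based central sequence out of its truncations, relying on three facts already in place: the lag-by-lag Hájek representation $\text{vec}(\tenq{\bGamma}_{i, {J}_1, {J}_2}^{(n)}(\bth) - \bar{\bGamma}_{i, {J}_1, {J}_2}^{(n)}(\bth)) = o_{\rm P}(n^{-1/2})$ from Lemma~\ref{bar.til.Gam}, the asymptotic normality of the truncated vector $\bar{\bDelta}^{(n)}_{m, {J}_1, {J}_2}(\bth)$ from Corollary~\ref{asy.S.bar}, and the exponential decay, under Assumption~(A1), of the Green matrices $\H_u$ and $\G_u$, hence of the blocks $\Q_{i, \bth}$ of $\Q^{(n)}_{\bth}$ that enter $\T^{(n)}_{\bth} = \M_{\bth}^\prime \P_{\bth}^\prime \Q_{\bth}^{(n)\prime}$.

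First I would prove the equivalence \eqref{bar.til.Delta.n} under ${\rm P}^{(n)}_{\bth ;f}$. Writing the difference as the sum of three pieces,
\begin{equation*}
\tenq{\bDelta}^{(n)}_{{J}_1, {J}_2} - \bar{\bDelta}^{(n)}_{{J}_1, {J}_2} = \big(\tenq{\bDelta}^{(n)}_{{J}_1, {J}_2} - \tenq{\bDelta}^{(n)}_{m, {J}_1, {J}_2}\big) + \big(\tenq{\bDelta}^{(n)}_{m, {J}_1, {J}_2} - \bar{\bDelta}^{(n)}_{m, {J}_1, {J}_2}\big) + \big(\bar{\bDelta}^{(n)}_{m, {J}_1, {J}_2} - \bar{\bDelta}^{(n)}_{{J}_1, {J}_2}\big),
\end{equation*}
I would handle the middle term, for fixed $m$, by Lemma~\ref{bar.til.Gam}: it equals $\T^{(m+1)}_{\bth}$ applied to a fixed number of blocks, each of order $o_{\rm P}(1)$, hence is $o_{\rm P}(1)$. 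The two outer (tail) terms have the form $\M_{\bth}^\prime \P_{\bth}^\prime \sum_{i = m+1}^{n-1} \Q_{i, \bth}^\prime (n-i)^{1/2}\text{vec}(\cdot)$; since the signs are bounded and ${J}_1, {J}_2$ are square-integrable, each block $(n-i)^{1/2}\text{vec}(\tenq{\bGamma}^{(n)}_{i, {J}_1, {J}_2})$ (and likewise $\bar{\bGamma}^{(n)}_{i, {J}_1, {J}_2}$) has second moment bounded uniformly in $i$ and $n$, while $\Vert \Q_{i, \bth}\Vert$ decays geometrically in $i$. Hence both tails are bounded in $L^2$ by $C \sum_{i > m} \rho^i$ for some $\rho < 1$, uniformly in $n$, and this vanishes as $m \to \infty$. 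A standard $\varepsilon$--$m$ argument then yields \eqref{bar.til.Delta.n} under ${\rm P}^{(n)}_{\bth ;f}$; contiguity of ${\rm P}^{(n)}_{\bth + n^{-1/2}\btau ;f}$ to ${\rm P}^{(n)}_{\bth ;f}$ (a consequence of the LAN property of Proposition~\ref{Prop.LAN1}) extends it to the perturbed sequence.

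For the limiting distribution I would then transfer normality from the truncation to the full statistic. By Corollary~\ref{asy.S.bar}, for each fixed $m$, $\bar{\bDelta}^{(n)}_{m, {J}_1, {J}_2}(\bth)$ is asymptotically normal with the stated mean and covariance, in which $\T^{(m+1)}_{\bth}(\I_m \otimes \K_{{J}_1, {J}_2, f})\T^{(m+1)\prime}_{\bth}$ and $\T^{(m+1)}_{\bth}\T^{(m+1)\prime}_{\bth}$ converge, as $m \to \infty$, to the limits appearing in \eqref{bar.tilde.Delta.diff} and in the covariance statement (these limits exist precisely because the geometric decay renders the infinite lag sums absolutely convergent). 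Combined with the uniform-in-$n$ tail bound $\limsup_n \mathrm{Var}(\bar{\bDelta}^{(n)}_{{J}_1, {J}_2} - \bar{\bDelta}^{(n)}_{m, {J}_1, {J}_2}) \to 0$ obtained above, the standard approximation-by-truncations device (e.g.\ Brockwell and Davis~(2006, Proposition~6.3.9)) gives the asymptotic normality of $\bar{\bDelta}^{(n)}_{{J}_1, {J}_2}(\bth)$; Slutsky, together with the equivalence \eqref{bar.til.Delta.n}, transfers it to $\tenq{\bDelta}^{(n)}_{{J}_1, {J}_2}(\bth)$. The mean under the perturbed measure then follows from Le Cam's third lemma, exactly as in Lemma~\ref{asy.Gami.bar}.

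The main obstacle is the interchange of the two limits $m \to \infty$ and $n \to \infty$: everything reduces to showing that the contribution of the high-order lags to the $\T^{(n)}_{\bth}$-weighted sums is negligible uniformly in $n$, which is precisely where the exponential decay of the Green matrices under Assumption~(A1) is indispensable.
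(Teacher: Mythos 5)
Your proposal is correct and follows essentially the same route as the paper: the same three-term truncation decomposition (with the fixed-lag middle term controlled by Lemma~\ref{bar.til.Gam}, and the two tails controlled in $L^2$ via the uniform second-moment bound $\sigma^2_{{J}_1}\sigma^2_{{J}_2}$ on the blocks together with the exponential decay of the Green matrices under Assumption~(A1)), followed by the transfer of asymptotic normality from the truncations of Corollary~\ref{asy.S.bar} and an appeal to contiguity and Le Cam's third lemma for the shifted mean. The only cosmetic difference is that you cite the Brockwell--Davis approximation-by-truncations proposition explicitly where the paper leaves that step implicit.
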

Note that the limits appearing in the above asymptotic means and covariances exist due to Assumption (A1) on the characteristic roots  of the VARMA operators involved. 

\begin{proof}
For (\ref{bar.til.Delta.n}), due to Lemma \ref{bar.til.Gam} and contiguity, it is sufficient  to prove that,  under~${\rm P}^{(n)}_{\bth ;f}$, for~$m=m(n) \leq n-1$ and provided that $m(n)\to\infty$ as $n\to\infty$,
\begin{equation}\label{bar.Delta.mn}
\underset{n \rightarrow \infty}{\lim \,\sup} \Vert \bar{\bDelta}^{(n)}_{{J}_1, {J}_2}(\bth) - \bar{\bDelta}^{(n)}_{m(n), {J}_1, {J}_2}(\bth)\Vert  = o_{\rm P}(1) 
\end{equation}
and 
\begin{equation}\label{til.Delta.mn}
\underset{n \rightarrow \infty}{\lim \,\sup}  \Vert \tenq{\bDelta}^{(n)}_{{J}_1, {J}_2}(\bth) - \tenq{\bDelta}^{(n)}_{m(n), {J}_1, {J}_2}(\bth)\Vert  = o_{\rm P}(1) .
\end{equation}
For  $m = n-1$, the left-hand sides in~\eqref{bar.Delta.mn} and \eqref{til.Delta.mn} are exactly zero. Therefore, we only need to consider $m \leq n-2$. 
 It follows from Proposition 3.1 (LAN2) in 
 Garel and Hallin (1995) that
\begin{align*}
&\bar{\bDelta}^{(n)}_{{J}_1, {J}_2}(\bth) - \bar{\bDelta}^{(n)}_{m(n), {J}_1, {J}_2}(\bth) \\
&\quad = 
\begin{bmatrix}
\sum_{i=m+1}^{n-1} \sum_{j=0}^{i-1} \sum_{k=0}^{\min (q, i-j-1)} [ (\G_{i-j-k-1} \B_k) \otimes \H_j^\prime ] (n-i)^{1/2} (\text{vec} (\bar{\bGamma}_{i, {J}_1, {J}_2}^{(n)}(\bth))) \\
\vdots \\
\sum_{i=m+1}^{n-1} \sum_{j=0}^{i-p} \sum_{k=0}^{\min (q, i-j-p)} [ (\G_{i-j-k-p} \B_k) \otimes \H_j^\prime ] (n-i)^{1/2} (\text{vec} (\bar{\bGamma}_{i, {J}_1, {J}_2}^{(n)}(\bth))) \\
\sum_{i=m+1}^{n-1} (\I_d \otimes \H_{i-1}^\prime (n-i)^{1/2} (\text{vec} (\bar{\bGamma}_{i, {J}_1, {J}_2}^{(n)}(\bth)) \\
\vdots \\
\sum_{i=m+1}^{n-1} (\I_d \otimes \H_{i-q}^\prime (n-i)^{1/2} (\text{vec} (\bar{\bGamma}_{i, {J}_1, {J}_2}^{(n)}(\bth))
\end{bmatrix} 
\end{align*}
for any~$p \leq m \leq n-2$,. Due to the square-integrability of ${J}_1, {J}_2$ and the fact that~$\ZZ_1^{(n)}, \ldots , \ZZ_n^{(n)}$ are i.i.d., it follows from  $(\text{vec} \A)^\prime (\text{vec} \B) = \text{tr} (\A^\prime \B)$ that 
$$
\Vert  (n-i)^{1/2} (\text{vec} (\bar{\bGamma}_{i, {J}_1, {J}_2}^{(n)}(\bth)) )\Vert _{L^2}^2 
=  (n-i)^{-1} \sum_{t = i+1}^n  {\mathrm E}\left[  {J}_1^2(\Vert  \F_{{\pms}, t} \Vert ) \right] 
{\mathrm E} \left[ {J}_2^2(\Vert  \F_{{\pms}, t-i} \Vert ) \right]  
= \sigma^2_{{J}_1}  \sigma^2_{{J}_2}  < \infty.
$$
Recall that, under Assumption (A1),  the Green matrices $\G_u$ and $\H_u$ decrease exponentially fast (see Appendix~A). Using the fact that   $\Vert  \mathbf{A} \mathbf{x} \Vert _{L^2} \leq \Vert  \mathbf{A} \Vert  \, \Vert \mathbf{x} \Vert _{L^2}$  (where $\Vert  \mathbf{A}\Vert $ denotes the operator norm of $\mathbf{A}$) and the triangular inequality, we thus obtain 
$$\underset{n \rightarrow \infty}{\lim \,\sup} \Vert \bar{\bDelta}^{(n)}_{{J}_1, {J}_2}(\bth) - \bar{\bDelta}^{(n)}_{m(n), {J}_1, {J}_2}(\bth)\Vert _{L^2} = 0.$$
The result \eqref{bar.Delta.mn} follows. 
 Turning to  (\ref{til.Delta.mn}),  we have, in view of \eqref{T11}, \eqref{T21} and \eqref{T22}, 
$$\underset{1 \leq i \leq  n-1}{\max} \Vert  (n-i)^{1/2} [\text{vec} (\bar{\bGamma}_{i, {J}_1, {J}_2}^{(n)}(\bth)) - \text{vec} (\tenq{\bGamma}_{i, {J}_1, {J}_2}^{(n)}(\bth)) ] \Vert _{L^2}^2 = o(1)$$
as $n \rightarrow \infty$. Hence, (\ref{til.Delta.mn}) follows along the same lines as (\ref{bar.Delta.mn}). The asymptotic normality of~$\tenq{\bDelta}^{(n)}_{{J}_1, {J}_2}(\bth)$ then follows from~(\ref{bar.til.Delta.n}) and the asymptotic normality of $\bar{\bDelta}^{(n)}_{{J}_1, {J}_2}(\bth)$, itself implied by (\ref{bar.Delta.mn}) and Lemma \ref{asy.S.bar}. The asymptotic mean and variance are the limits as $m=m(n)$ and~$n$ tend to infinity,  of the asymptotic mean and variance of $\bar{\bDelta}^{(n)}_{m(n), {J}_1, {J}_2}(\bth)$ and do not depend on the way $m$ grows with $n$.
\end{proof}

\vspace{0.2cm}

\noindent \textbf{Proof of Proposition \ref{asy0}.}

Proposition \ref{asy0} readily follows from  (\ref{til.Delta.mn}) and the asymptotic  linearity of the truncated~$\tenq{\bDelta}^{(n)}_{m, {J}_1, {J}_2}(\bth)$  implied by Assumption (A4).\cqfd 

\vspace{0.4cm}

\noindent \textbf{Proof of Proposition \ref{asy}.}

From the definition of $\tenq{\hat{\bth}}\n$ in (\ref{onestep.def}), the asymptotic linearity in Proposition \ref{asy0}, the consistency of  $\hat{\bUpsilon}_{{J}_1, {J}_2}^{(n)}$, the convergence of $\bUpsilon_{{J}_1, {J}_2, f}^{(n)}$ to $\bUpsilon_{{J}_1, {J}_2, f}$, and the asymptotic discreteness of $ \hat{\bth}^{(n)}$ (which allows us to treat $n^{1/2}( \hat{\bth}^{(n)} -  \bth )$ as if it were a bounded constant: see   Lemma~4.4  in Kreiss~(1987)), we have
\begin{align*}
  n^{1/2}  (\tenq{\hat{\bth}}\n - \bth)
  & 
 = n^{1/2} \left\lbrace \hat{\bth}^{(n)} + n^{-1/2} \left[ \left( \hat{\bUpsilon}_{{J}_1, {J}_2}^{(n)} \right)^{-1} \tenq{\bDelta}^{(n)}_{{J}_1, {J}_2} ( \hat{\bth}^{(n)}) \right] - \bth  \right\rbrace \\
 &= n^{1/2} \left\lbrace \hat{\bth}^{(n)} + n^{-1/2} \left[  \bUpsilon_{{J}_1, {J}_2, f}^{-1}  \left( \tenq{\bDelta}^{(n)}_{{J}_1, {J}_2} (\bth) -  \bUpsilon_{{J}_1, {J}_2, f}^{(n)}  n^{1/2} (\hat{\bth}^{(n)} - \bth)  \right) \right] - \bth  \right\rbrace  + o_{\rm P}(1) \\
 &=   \bUpsilon_{{J}_1, {J}_2, f}^{-1}  \tenq{\bDelta}^{(n)}_{{J}_1, {J}_2} (\bth) + o_{\rm P}(1).
\end{align*}
This, in view of the asymptotic normality of $\tenq{\bDelta}^{(n)}_{{J}_1, {J}_2} (\bth)$ in Lemma \ref{lem.bar.til.Delta.n}, completes the proof of Proposition \ref{asy}.\cqfd 


\section{Computational issues}\label{secalg}

\subsection{Implementation details} \label{Sec:ImpDet}

In this section, we briefly discuss some computational aspects related to the implementation of our methodology.

(i) Consistency requires that both $n_R $ and $n_S$  tend to infinity. In practice, we  factorize $n$\linebreak  into~$n_Rn_S + n_0$ in such a way that both $n_R$ and $n_S$ are large. Typically, $n_R$  is of order~$n^{1/d}$ and~$n_S$ is of order~$n^{(d-1)/d}$, whilst $0 \leq n_0 < \min(n_S, n_R)$ has to be small as possible---its value, however, is entirely determined by  the values of $n_R$ and $n_S$.
Generating  ``regular grids" of $ n_S$ points over the unit sphere $\mathcal{S}_{d-1}$ as described in Section \ref{secranks} is easy  for $d=2$, where perfect regularity can be achieved by dividing the unit circle into $n_S$ arcs of equal length~$2\pi/n_S$. {For $d\geq 3$, 
``perfect regularity"  is no longer possible. 
A random array of $n_S$ independent and uniformly distributed unit vectors does satisfy (almost surely) the requirement for weak convergence (to ${\mathrm U}_d$). More regular deterministic arrays (with faster convergence) can be constructed, though, such as the {\it low-discrepancy sequences}   (see, e.g., Niederreiter (1992), Judd~(1998), Dick and Pillichshammer (2014), or Santner et al. (2003)) considered  in numerical integration and the design of computer experiments; we suggest the use of the function {\tt UnitSphere} in R package {\tt mvmesh}. 

(ii) The empirical center-outward distribution function $\F_{{\pms} }^{(n)} $ is obtained as the solution of  an optimal coupling problem. Many efficient algorithms have been proposed in the measure transportation literature (see, e.g., Peyr\' e and Cuturi~(2019)). We followed   Hallin et al.~(2020a),  using a Hungarian algorithm (see the \texttt{clue} R package).

(iii) The computation of the one-step R-estimator  in (\ref{onestep.def}) involves  two basic ingredients: a preliminary root $n$-consistent estimator $\hat{\bth}^{(n)}$ and an estimator of the cross-information matrix~$\bUpsilon_{{J}_1, {J}_2, f}$. For the  preliminary  $\hat{\bth}^{(n)}$,  robust M-estimators such as the  reweighted multivariate least trimmed squares estimator  (RMLTSE) proposed by Croux and Joossens (2008) for VAR models are obvious candidates; provided that  fourth-order moments   finite, the QMLE still constitutes   a reasonable  choice, though. Different preliminary estimators may lead to different one-step R-estimators. Differences, however, gradually wane on iterating (for fixed $n$) the one-step procedure and the asymptotic impact (as~$n\to\infty$) of the choice of~$ \hat{\bth}^{(n)}$ is nil. 
%
 Turning to the estimation of $\bUpsilon_{{J}_1, {J}_2, f}$, the issue is that this matrix depends on the unknown actual density~$f$. 
A simple consistent estimator is obtained by letting $\btau= {\bf e}_i$, $i=1,\ldots,(p+q)d^2$ in~\eqref{asy.linear2} where ${\bf e}_i$  denotes the $i$th vector of the canonical basis in the parameter space~${\mathbb{R}}^{(p+q)d^2}$: the difference~$
\tenq{\bDelta}^{(n)}_{{J}_1, {J}_2}(\hat{\bth}^{(n)} + n^{-1/2}{\bf e}_i) - \tenq{\bDelta}^{(n)}_{{J}_1, {J}_2}(\hat{\bth}^{(n)})
$ 
then provides a consistent estimator of the $i$-th column of $-\bUpsilon_{{J}_1, {J}_2, f}(\bth)$. 
See Hallin et al.~(2006) or Cassart et al.~(2010) for more sophisticated estimation methods. 

\subsection{Algorithm}\label{algsec}

We give here a detailed description of the estimation algorithm. Due to the exponential decay, under Assumption~(A1), of the coefficients of the MA(${\infty}$) representation of VARMA($p,q$) models, there is no need to bother about the truncation   of the central sequence, which safely can be  set to  $m=n-1$ or $m=(1-p)n$ with $p<1$. Then, the implementation of our R-estimation method then goes along the lines of the following algorithm. \vspace{3mm}

\begin{algorithm}[H]
\SetAlgoLined\vspace{3mm}

\KwIn{a $d$-dimensional sample $\{\X_t; 1 \leq t \leq n\}$, orders $p$ and $q$ of the VARMA process, number $k$ of iterations in the one-step procedure; truncation lag $m$.}
\KwOut{R-estimator $\tenq{\hat{\bth}}\n$}
\begin{enumerate}
\item Factorize $n$ into $n_Rn_S + n_0$ and  generate (see {\it (i)} of Appendix C.1),  a ``regular grid" of $n_R n_S$ points over the unit ball $\mathbb{S}_d$.  

\item Compute a preliminary root-$n$ consistent estimator $\hat{\bth}^{(n)}$.

 \item Set the initial values $\bepsilon_{-q+1}, \ldots , \bepsilon_0$ and~$\X_{-p + 1}, \ldots , \X_{0}$ all equal to zero, and compute residuals  $\ZZ_1^{(n)}(\hat{\bth}^{(n)}),\ldots,\ZZ_n^{(n)}(\hat{\bth}^{(n)})$ recursively or from~\eqref{Zt.recursive}.
 
 \item Create a $n\times n$ matrix $\mathbf{D}$ with $(i,j)$ entry 
 the squared Euclidean distance between~$\mathbf{Z}_i^{(n)}$ and the $j$-th gridpoint. Based on that  matrix, compute $\{\F_{\pms}^{(n)} (\ZZ_t^{(n)}); t=1,\ldots, n \}$ solving the optimal pairing problem in (\ref{Fpm0}),  using e.g.  the Hungarian algorithm. 
 
 \item From $\F^{(n)}_{\pms}$, compute the center-outward ranks~(\ref{Ranks}) and signs~(\ref{Signs}).
 
 \item Specify the scores ${J}_1$ and ${J}_2$ (e.g., the standard scores proposed in Section~\ref{Sec: examples})  and compute $\M_{{\hat{\bth}^{(n)}}}$, $\P_{{\hat{\bth}^{(n)}}}$, and $\Q^{(n)}_{{\hat{\bth}^{(n)}}}$, 
 as defined in Appendix A, then   $\tenq{\bGamma}_{i, {J}_1, {J}_2}^{(n)}({\hat{\bth}^{(n)}})$ (use e.g. one of the expressions available in Section~\ref{Sec: examples}). Finally, combine these expressions into $\tenq{\bDelta}^{(n)}_{{J}_1, {J}_2}(\hat{\bth}^{(n)})$. 
 
 \item For some chosen $\btau_1,\ldots,\btau_{(p+q)d^2}$, 
 compute $\tenq{\bDelta}^{(n)}_{{J}_1, {J}_2}(\hat{\bth}^{(n)} + n^{-1/2}\btau)$, then,  via~(\ref{asy.linear2}),~$\hat{\bUpsilon}_{{J}_1, {J}_2}^{(n)}$.
 
\item Set  $\tenq{\hat{\bth}}\n = \hat{\bth}^{(n)}$.
 
\item \For{$i \gets 1$ \textbf{to} $k$}{

$$\tenq{\hat{\bth}}\n \gets \tenq{\hat{\bth}}\n + n^{-1/2} \left( \hat{\bUpsilon}_{{J}_1, {J}_2}^{(n)} \right)^{-1} \tenq{\bDelta}^{(n)}_{{J}_1, {J}_2} ( \tenq{\hat{\bth}}\n).$$
}

\end{enumerate}
\caption{Center-outward R-estimation for semiparametric VARMA models}
\end{algorithm}

\newpage

\section{Supplementary material for Section~\ref{Sec: MC}}\label{Supp.sim}

\subsection{Center-outward quantile contours, with a graphical illustration}

We provide here some additional concepts from Hallin (2017) and Hallin et al.~(2020). Recall that an {\it order statistic} $\ZZ_{(\cdot)}^{(n)}$ of the un-ordered $n$-tuple $\ZZ^{(n)}$ is an arbitrarily ordered version of the same---for instance,  $\ZZ_{(\cdot)}^{(n)} = \left( \ZZ_{(1)}^{(n)}, ..., \ZZ_{(n)}^{(n)}\right)$, where $\ZZ_{(i)}^{(n)}$ is such that its first component is the $i$th order
statistic of the $n$-tuple of first components.

The \textit{center-outward quantile contours} are defined as
\begin{equation}
\mathcal{C}^{(n)}_{{\pms};\ZZ^{(n)}_{(.)}}\left(\frac{j}{n_R +1} \right):= \big\{  \ZZ^{(n)}_t \vert R^{(n)}_{{\pms}, t} = j  \big\},
\label{Cont}
\end{equation}
where ${j}/{(n_R +1)}$, $j = 0, 1, . . . , n_R$ is an empirical probability content, to be interpreted as a quantile order.   Figure \ref{Fig: Contours} provides a graphical illustration of this concept: 
 $n=1000$ (with~$n_R=25$ and~$n_S=40$) bivariate observations were drawn from the Gaussian  mixture~(\ref{Eq. Mixture}), the skew-normal and skew-$t_3$ described in Section~\ref{skewsec}, and, for a comparison, from a spherical multivariate normal. The plots show that the center-outward quantile contours nicely conform to the shape of the  underlying distribution in both symmetric and asymmetric cases.

\subsection{Skew-normal, skew-$t$, and Gaussian mixture innovation densities}\label{skewsec}
The   skew-normal  distribution considered in Section~\ref{Sec: MC} has density (with $\phi({\bf \cdot}; \bSigma)$ standing for the~${\cal N}({\bf 0}, \bSigma)$ density, $\Phi$  for   the univariate  standard normal  distribution function)  
\begin{equation}\label{SN.density}
f_{\bepsilon}(\z; \bxi, \bSigma, \al) := 2 \phi(\z - \bxi; \bSigma) \Phi(\al^\prime {\mbf w}^{-1} (\z - \bxi)),\quad \z \in \R^d,
\end{equation}
where~$\bxi \in \R^d$, $\al \in \R^d$, and  ${\mbf w} = {\rm diag} (w_1, \ldots, w_d) >0$ are location, shape, and scale parameters, respectively. The  skew-$t_\nu$  distribution has density\vspace{-8mm} 
\begin{align}\label{St.density}
&  \\
f_{\bepsilon}(\z; \bxi, \bSigma, \al, \nu) :=2 {\rm det}({\mbf w)}^{-1} 
 t_d(\x; \bSigma, \nu) T\left(\al^\prime \x  \big({{(\nu + d)}/{(\nu + \x^\prime \bSigma^{-1} \x)}}\big)^{1/2}; \nu + d \right),\quad\!\!\! \z \in \R^d,  \nonumber
\end{align}
where $\x = {\mbf w}^{-1}(\z - \bxi)$,   $T(y; \nu)$ denotes the univariate $t_\nu$ distribution function and 
$$t_d(\x; \bSigma, \nu) := \frac{\Gamma((\nu+d)/2)}{(\nu \pi)^{d/2} \Gamma(\nu/2) {\rm det}(\bSigma)^{1/2}} \left( 1+ \frac{\x^\prime \bSigma^{-1} \x}{\nu} \right)^{-(\nu+d)/2},\quad \x \in \R^d.$$ 
We refer to Azzalini and Dalla Valle (1996), Azzalini and Capitanio (2003) for details.  

\begin{figure}[!htbp]
\caption{Empirical center-outward quantile contours (probability contents 26.9\%, 50 \%, and~80\%, respectively) computed from~$n=1000$ points drawn from 
the  Gaussian  mixture~(\ref{Eq. Mixture}) (top left), the skew-normal and skew-$t_3$ described in Section~\ref{skewsec} (top right and bottom left) and, for a comparison, from a standard multivariate normal (bottom right).\vspace{-3mm}}
\begin{center}
\begin{tabular}{cc}
\includegraphics[width=0.5\textwidth, height=0.45\textwidth]{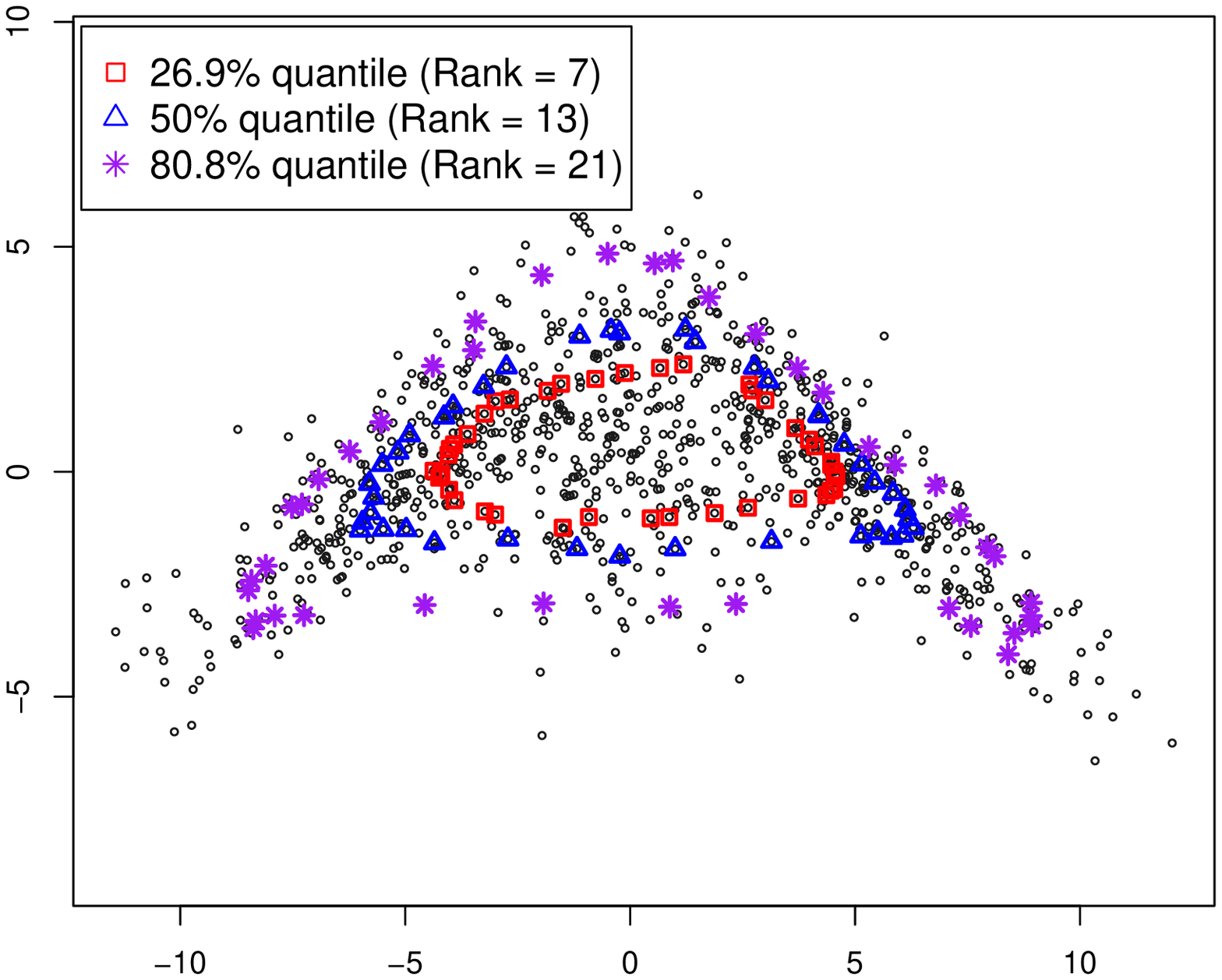} \vspace{-1mm}\hspace{-5mm}
&\includegraphics[width=0.5\textwidth, height=0.45\textwidth]{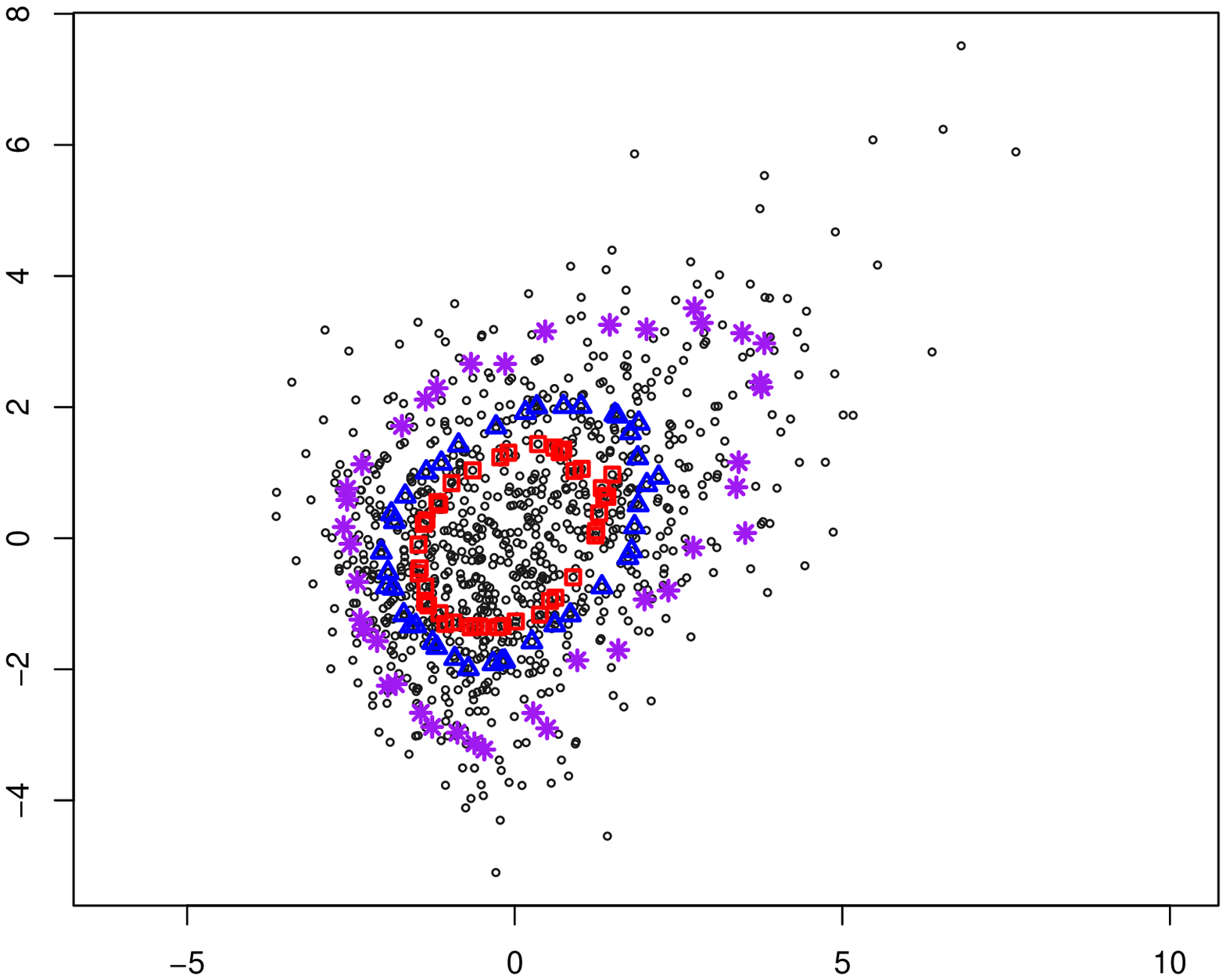}\vspace{-1mm}\hspace{-5mm}
\\
\includegraphics[width=0.5\textwidth, height=0.45\textwidth]{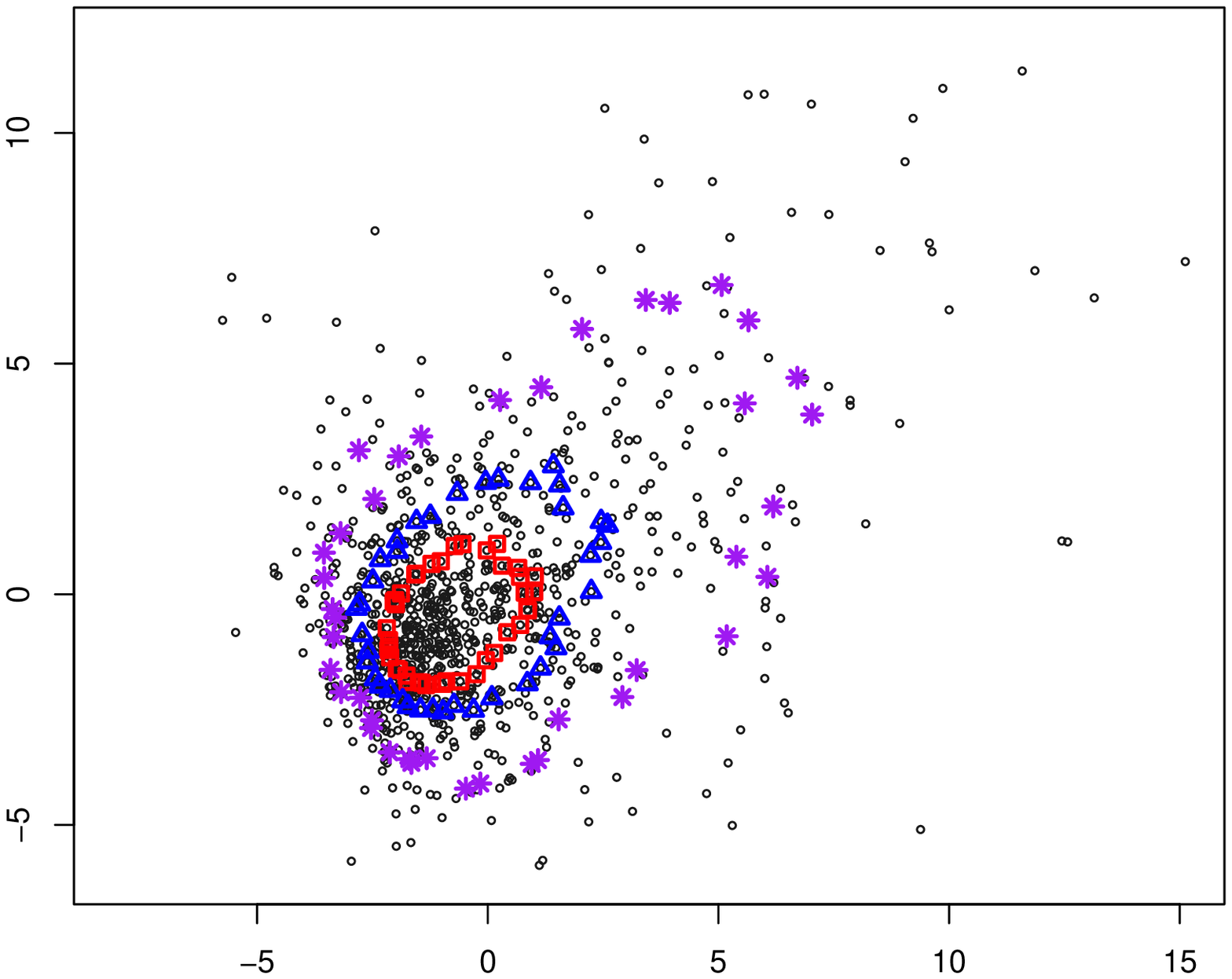}\vspace{-1mm}\hspace{-5mm}
&\includegraphics[width=0.5\textwidth, height=0.45\textwidth]{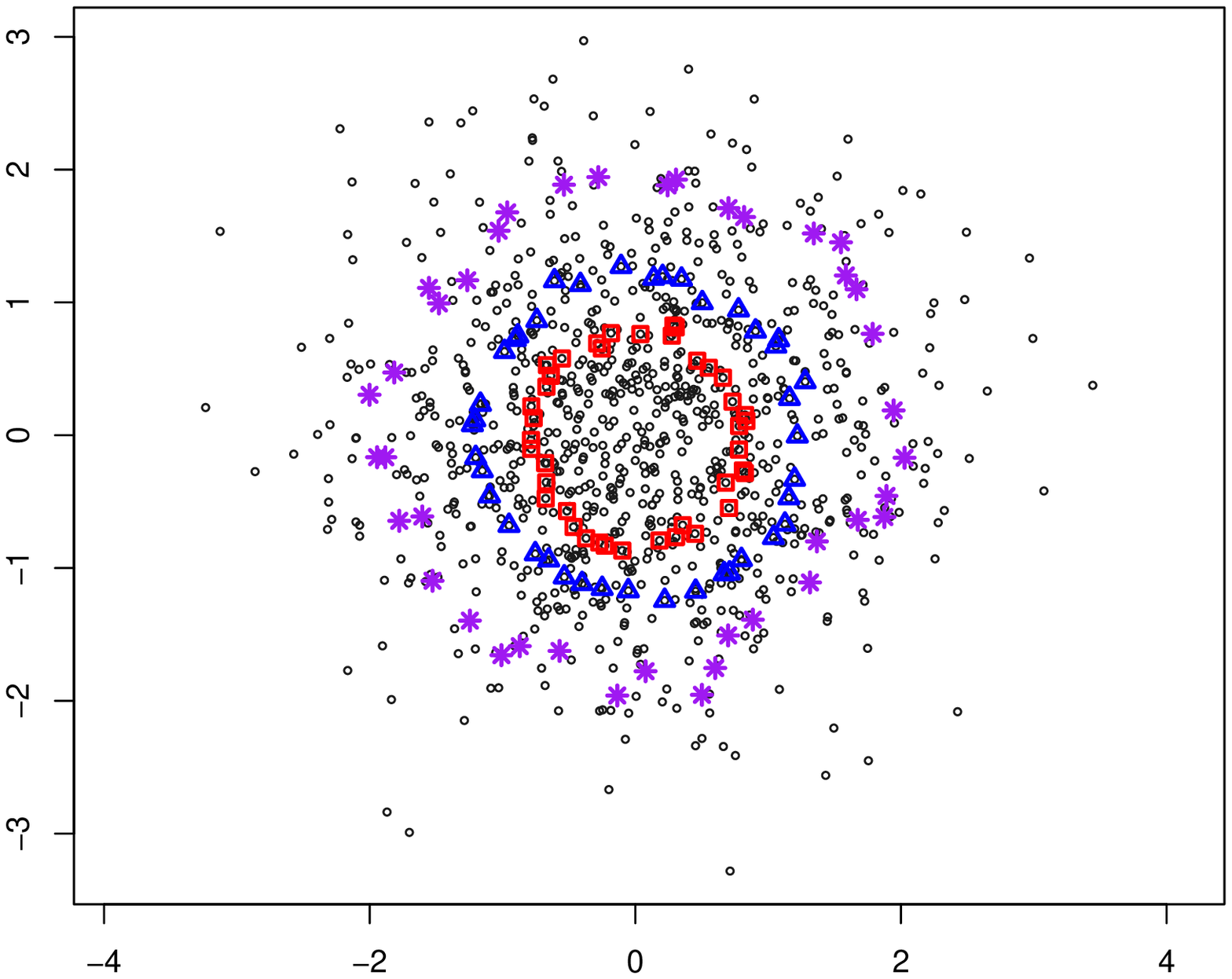}\vspace{-1mm}\hspace{-5mm}
\end{tabular}
\end{center}
\vspace{-3mm}\label{Fig: Contours}
\end{figure}

 Our samples for skew-normal and skew-$t_3$ were simulated from the function {\tt rmst} in the R Package {\tt sn} by setting $\bxi = \0, \al = (5, 2)^\prime, \bSigma = \left( \begin{array}{cc}
7 & 4 \\
4 & 5
\end{array}\right)$. In order to satisfy the classical conditions for M-estimation,  we centered  the simulated innovations about their mean, a centering which does not affect our R-estimators. 

Figure~\ref{Fig: Contours} provides scatterplots of  samples of size  $n=1000$ from the spherical normal, the skew-normal, the skew-$t_3$, and the Gaussian mixture described in Section~\ref{Sec: MC}. 

\subsection{Additional numerical results}\label{NumresApp}

\subsubsection{Large sample }\label{AppD31}

As a complement to Section~\ref{Sect: MClarge}, we provide here, for sample size $n = 1000$, boxplots of the QMLE, $t_5$-QMLE, RMLTSE, and R-estimators (sign test, Spearman, and van der Waerden scores) under skew-normal, skew-$t_3$,  $t_3$ and non-spherical Gaussian innovations with covariance~
$$
\bSigma_4 =\left( \begin{array}{cc}
5 & 4 \\
4 & 4.5
\end{array}\right);$$
See Figure~\ref{boxSkewNorm}, \ref{boxSkewt3}, \ref{boxt3} and \ref{boxell}, respectively. 

Under skew-normal (Figure~\ref{boxSkewNorm}) and skew-$t_3$ (Figure~\ref{boxSkewt3}) innovations, the vdW and Spearman R-estimators are less dispersed than other M-estimators, showing that they are more resistant to skewness. Under spherical $t_3$ innovations (Figure~\ref{boxt3}), outlying observations are relatively frequent and the QMLE is no longer root-$n$ consistent. The RMLTSE does its job as a robustified estimator and slightly outperforms the R-estimators (the weakest of which is the sign-test score one).  The non-spherical Gaussian boxplots (Figure~\ref{boxell}) show that the vdW and Spearman R-estimators are quite similar  to the QMLE.


\begin{center}
\begin{figure}[!htbp]
\caption{Boxplots of the QMLE, $t_5$-QMLE,  RMLTSE, and R-estimators (sign test, Spearman,  and van der Waerden) under   skew-normal innovations     (\ref{SN.density}); sample size $n = 1000$;  $N = 300$ replications.  The horizontal red line represents the actual parameter value.}
\begin{center}
\includegraphics[width=1\textwidth, height=0.5\textwidth]{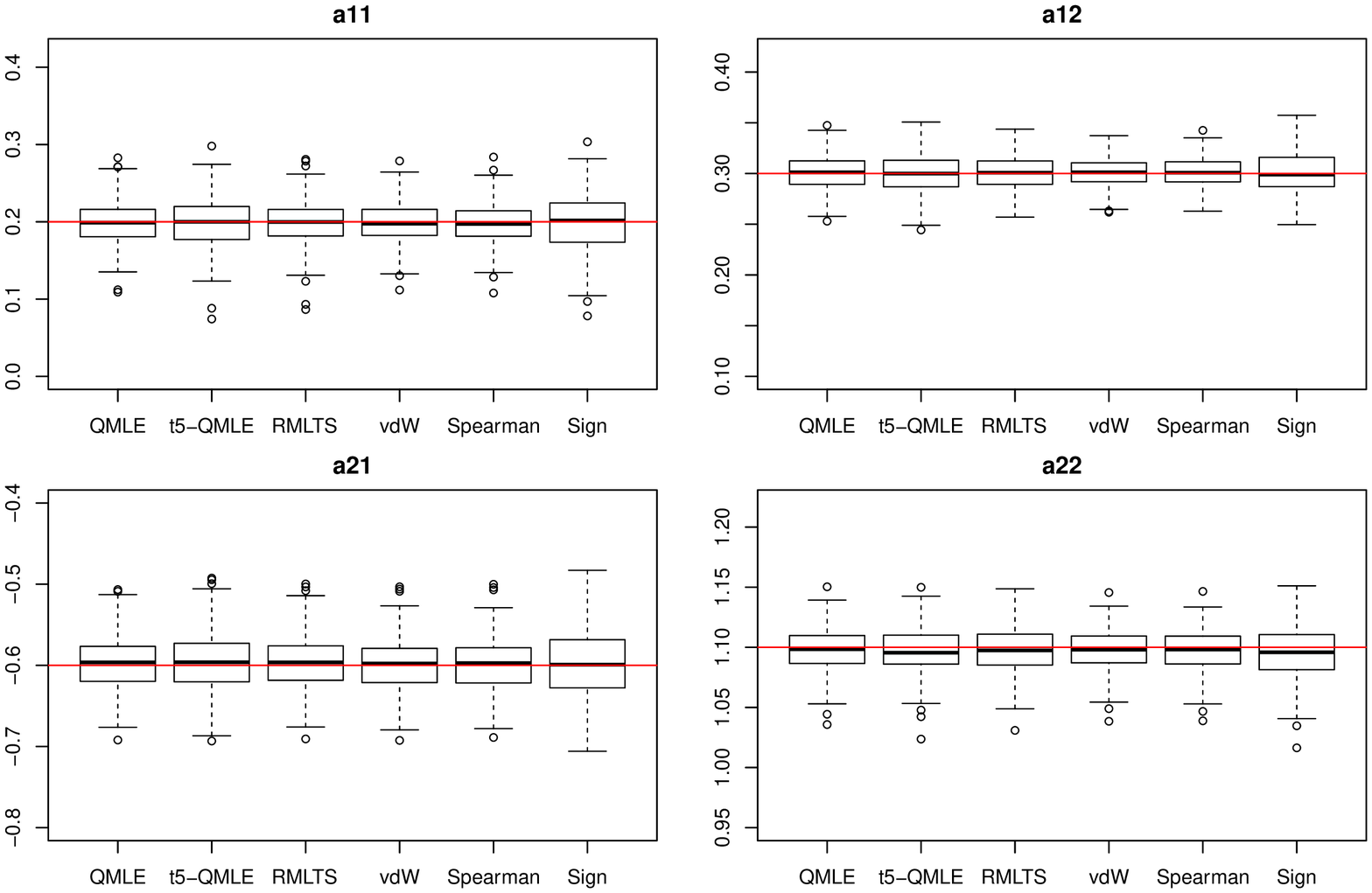} \vspace{-8mm}
\end{center}
\label{boxSkewNorm}
\end{figure}
\end{center}

\begin{center}
\begin{figure}[!htbp]
\caption{Boxplots of the QMLE, $t_5$-QMLE, RMLTSE, and R-estimators (sign test, Spearman,  and van der Waerden) under   skew-$t_3$ innovations     (\ref{St.density}); sample size $n = 1000$;  $N = 300$ replications.  The horizontal red line represents the actual parameter value.}
\begin{center}
\includegraphics[width=1\textwidth, height=0.5\textwidth]{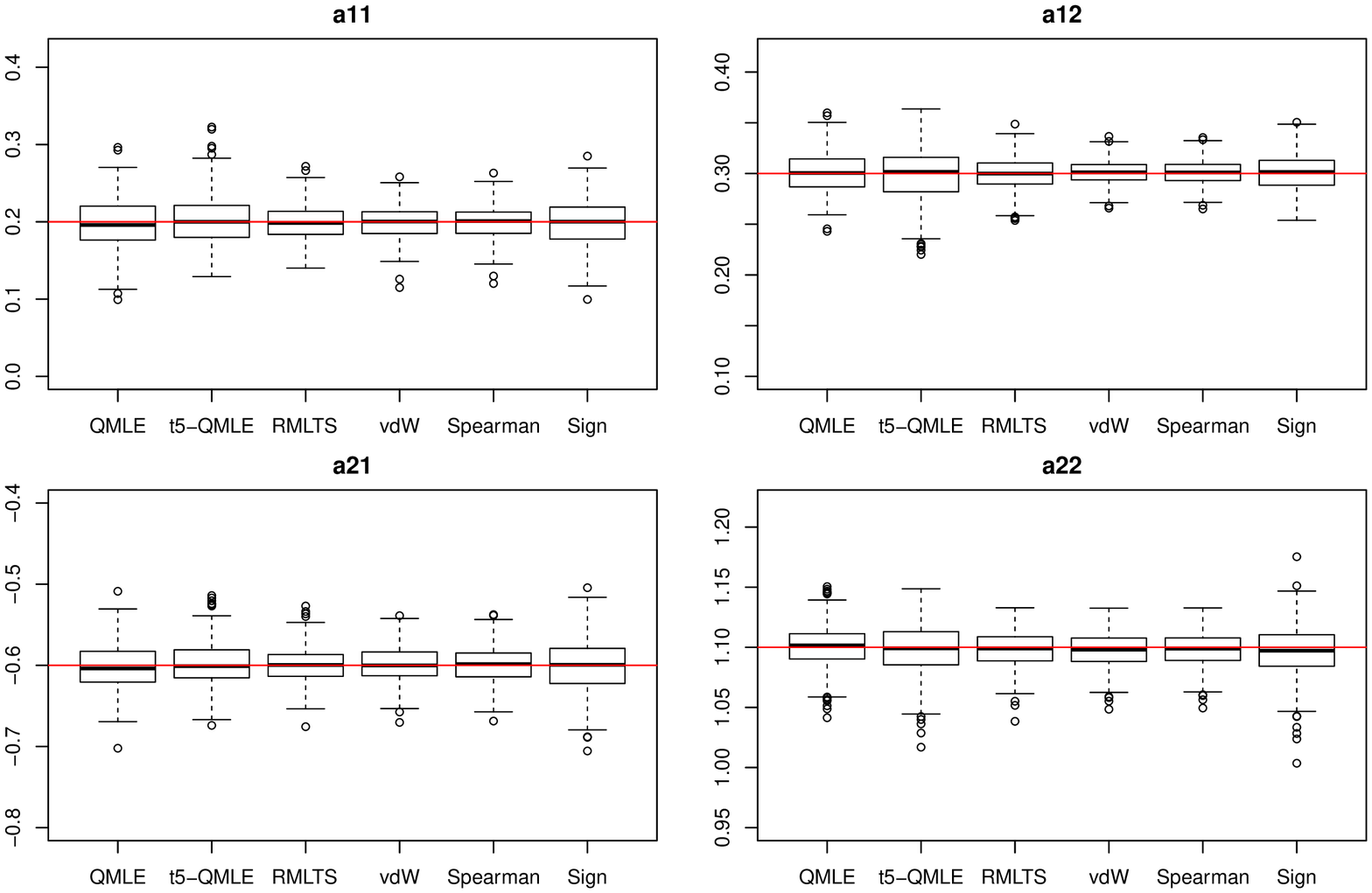}
\end{center}
\label{boxSkewt3}
\end{figure}
\end{center}

\begin{center}
\begin{figure}[!htbp]
\caption{Boxplots of the QMLE, $t_5$-QMLE, RMLTSE, and R-estimators (sign test, Spearman, and van der Waerden scores) under   $t_3$ innovations; sample size $n = 1000$;  $N = 300$ replications.  The horizontal red line represents the actual parameter value.}
\begin{center}
\includegraphics[width=1\textwidth, height=0.55\textwidth]{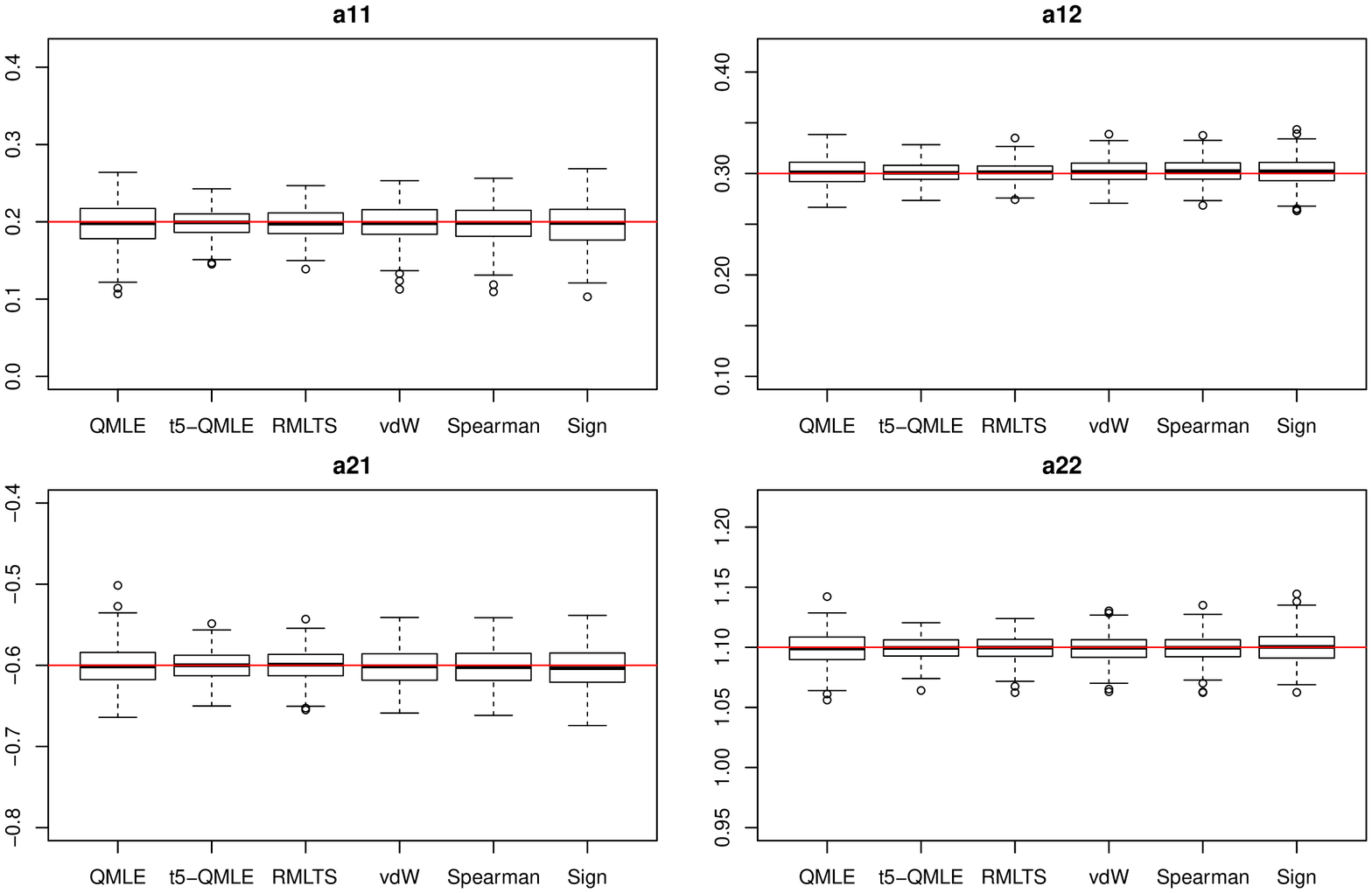} \vspace{-12mm}
\end{center}
\label{boxt3}
\end{figure}
\end{center}

\begin{center}
\begin{figure}[!htbp]
\caption{Boxplots of the QMLE, $t_5$-QMLE, RMLTSE, and R-estimators (sign test, Spearman, and van der Waerden scores) under  non-spherical Gaussian innovations; sample size $n = 1000$;  $N = 300$ replications.  The horizontal red line represents the actual parameter value.}
\begin{center}
\includegraphics[width=1\textwidth, height=0.55\textwidth]{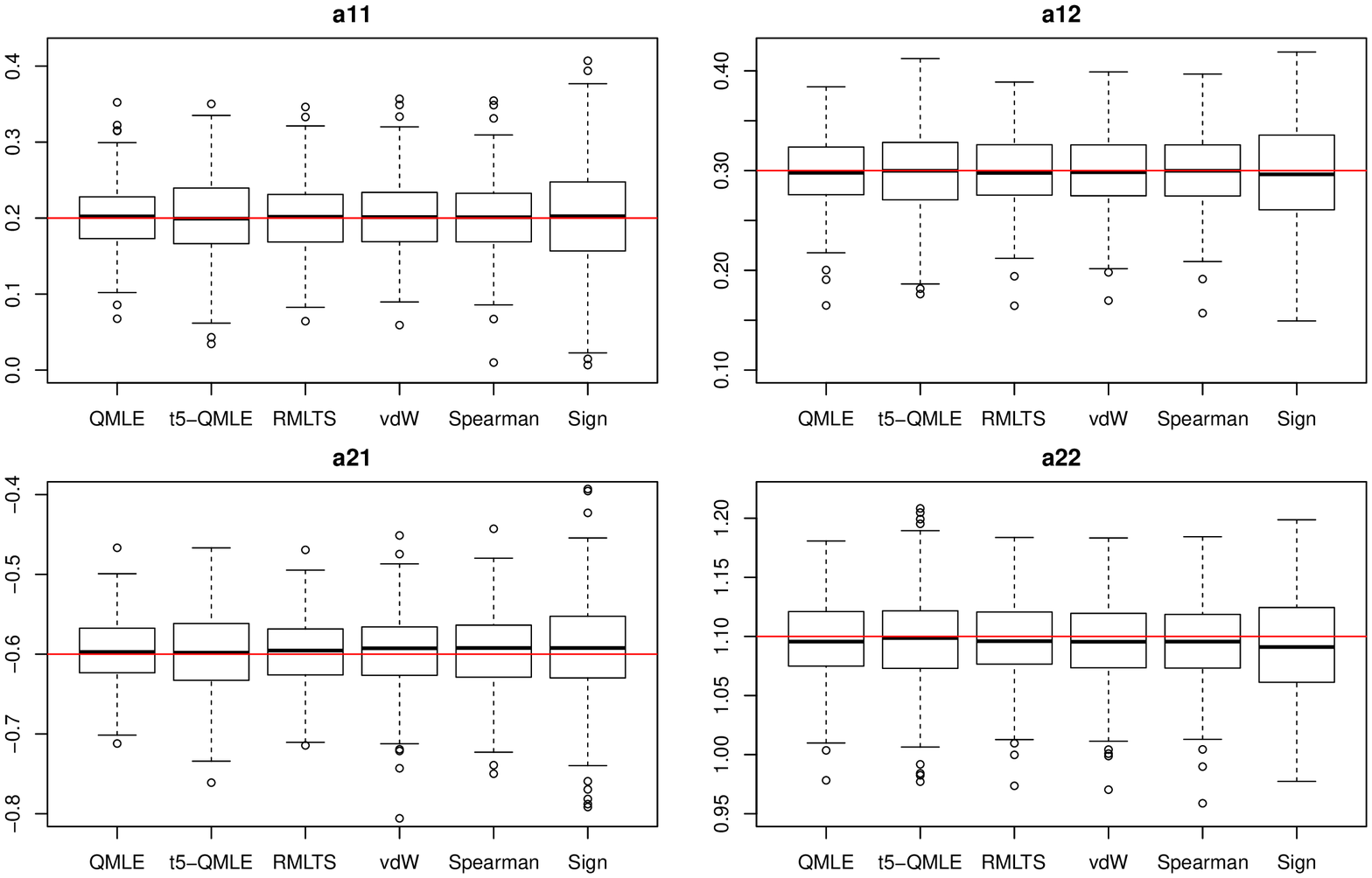} \vspace{-12mm}
\end{center}
\label{boxell}
\end{figure}
\end{center}

\subsubsection{Small sample and outliers}\label{AppD32}

For sample size $n = 300$, we display here, in   Figures~\ref{boxmixture300}, \ref{boxGaussiann300}, \ref{boxSkewNormaln300}, \ref{boxSkewt3n300}, and \ref{boxt3n300}, the boxplots of the QMLE, $t_5$-QMLE, RMLTSE, and R-estimators (sign test, Spearman, and van der Waerden scores) under the Gaussian mixture \eqref{Eq. Mixture}, spherical Gaussian, skew-normal, skew-$t_3$, and $t_3$, respectively. These pictures complement the boxplots available in Section~\ref{subsec.AO},  for  the additive outlier case.

All boxplots, as well as Table~\ref {Tab.n300} confirm the fact that, while doing equally well under spherical and Gaussian-tailed innovations, as the common practice QMLE,  R-estimation is resisting skewness, heavy tails, non-elliptical contours, and the presence of additive outliers, sometimes better even than the robust RMLTSE.

\begin{table}[!htbp]
\caption{The estimated bias ($\times 10^3$), MSE ($\times 10^3$), and overall MSE ratios of the QMLE,   $t_5$-QMLE,  RMLTSE, and R-estimators  under various innovation densities.   The sample size is $n = 300$; $N = 300$ replications.}\label{Tab.n300}
\centering
\small
\begin{tabular}{ccccc|cccc|c}
\hline
            & \multicolumn{4}{c}{Bias ($\times 10^3$)}           & \multicolumn{4}{c}{MSE  ($\times 10^3$)}  &  MSE ratio \\  \hline
                    & $a_{11}$          & $a_{21}$ & $a_{12}$ & $a_{22}$ & $a_{11}$ & $a_{21}$ & $a_{12}$ & $a_{22}$ &                   \\ \hline
(Normal)             &                    &          &        &        &        &        &       &       &        \\
QMLE                 & -7.208             & -2.006   & 2.639  & -0.870 & 2.624  & 2.715  & 0.543 & 0.733 &        \\
$t_5$-QMLE            & -8.352             & -2.065   & 3.701  & -1.071 & 2.783  & 2.796  & 0.580 & 0.751 & 0.957  \\
RMLTS                & -8.374             & -2.423   & 3.481  & -0.706 & 3.014  & 2.818  & 0.607 & 0.714 & 0.925  \\
vdW                  & 4.247              & -3.994   & -2.337 & 1.076  & 1.486  & 1.003  & 0.985 & 1.000 & 1.478  \\
Spearman             & 5.041              & -6.119   & -3.395 & 3.332  & 1.661  & 1.204  & 1.165 & 1.292 & 1.243  \\
Sign                 & 6.124              & -6.672   & -4.254 & 4.294  & 2.586  & 1.839  & 1.487 & 0.992 & 0.958  \\ \hline                    
(Mixture)            &                    &          &        &        &        &        &       &       &        \\
QMLE      & -3.430 & -0.123 & 4.399  & -1.814 & 2.751  & 0.550 & 1.000 & 0.213 &       \\
$t_5$-MLE & -1.593 & 0.240  & 5.467  & -1.277 & 12.295 & 0.918 & 4.129 & 0.461 & 0.254 \\
RMLTS     & -2.459 & -0.397 & 3.997  & -1.392 & 2.707  & 0.578 & 1.025 & 0.220 & 0.997 \\
vdW       & -2.484 & -0.007 & 5.065  & 1.348  & 1.427  & 0.368 & 0.733 & 0.379 & 1.554 \\
Spearman  & -2.632 & 0.742  & 5.160  & 1.104  & 1.329  & 0.379 & 0.694 & 0.332 & 1.652 \\
Sign      & -3.152 & -0.066 & 10.017 & 1.164  & 4.313  & 0.745 & 2.283 & 0.566 & 0.571   \\ \hline                    
\multicolumn{2}{l}{(Skew-normal)}       &          &        &        &        &        &       &       &        \\
QMLE                 & -9.045             & -7.223   & 5.870  & -2.116 & 3.564  & 3.308  & 1.087 & 1.022 &        \\
$t_5$-QMLE            & -7.788             & -7.028   & 6.400  & -1.115 & 4.581  & 3.992  & 1.518 & 1.327 & 0.787  \\
RMLTS                & -9.558             & -6.833   & 5.186  & -1.844 & 3.988  & 3.574  & 1.200 & 1.140 & 0.907  \\
vdW                  & -7.086             & -1.523   & 7.358  & -5.660 & 1.879  & 3.052  & 0.442 & 0.706 & 1.477  \\
Spearman             & -6.960             & -1.101   & 7.198  & -5.676 & 1.911  & 3.109  & 0.448 & 0.721 & 1.451  \\
Sign                 & -12.525            & 0.748    & 10.592 & -6.080 & 3.989  & 5.962  & 1.014 & 1.180 & 0.740  \\ \hline                    
\multicolumn{2}{l}{(Skew-$t_3$)}          &          &        &        &        &        &       &       &        \\
QMLE                 & -11.108            & -4.201   & 3.932  & -1.327 & 3.148  & 2.710  & 1.446 & 1.209 &        \\
$t_5$-QMLE            & 1.801              & 5.000    & 3.371  & -1.652 & 3.796  & 2.771  & 2.269 & 1.417 & 0.830  \\
RMLTS                & -3.378             & 0.428    & 4.358  & -1.058 & 1.918  & 1.780  & 1.129 & 0.833 & 1.504  \\
vdW                  & -7.152             & 0.232    & 6.544  & -3.750 & 1.718  & 2.320  & 0.634 & 1.240 & 1.440  \\
Spearman             & -5.594             & -1.927   & 6.402  & -2.279 & 1.719  & 2.388  & 0.625 & 1.365 & 1.396  \\
Sign                 & -3.380             & -1.968   & 6.469  & -0.033 & 4.816  & 4.863  & 1.900 & 2.054 & 0.624  \\ \hline                    
($t_3$)              &                    &          &        &        &        &        &       &       &        \\
QMLE                 & 0.168              & -0.844   & 2.047  & -1.063 & 2.279  & 2.593  & 0.647 & 0.658 &        \\
$t_5$-QMLE            & -2.189             & 0.647    & 1.176  & -1.347 & 1.160  & 1.215  & 0.339 & 0.343 & 2.021  \\
RMLTS                & -3.538             & 2.340    & 0.680  & -1.734 & 1.343  & 1.377  & 0.379 & 0.358 & 1.787  \\
vdW                  & -3.426             & -0.037   & 3.681  & -6.190 & 1.435  & 2.896  & 0.309 & 0.816 & 1.132  \\
Spearman             & -2.715             & 0.208    & 3.737  & -5.768 & 1.387  & 2.930  & 0.306 & 0.788 & 1.141  \\
Sign                 & -2.552             & 1.297    & 2.626  & -6.454 & 2.842  & 5.634  & 0.564 & 2.045 & 0.557  \\ \hline                    
\multicolumn{2}{l}{(Additive   outliers)} &          &        &        &        &        &       &       &        \\
QMLE                 & -154.990           & -149.720 & 15.327 & 10.173 & 27.667 & 24.982 & 1.021 & 1.080 &        \\
$t_5$-QMLE            & -110.645           & -105.918 & 12.836 & 7.714  & 15.310 & 13.590 & 0.859 & 1.049 & 1.777  \\
RMLTS                & -76.970            & -71.918  & 9.792  & 4.743  & 9.931  & 8.795  & 0.853 & 1.042 & 2.655  \\
vdW                  & -3.426             & -0.037   & 3.681  & -6.190 & 1.435  & 2.896  & 0.309 & 0.816 & 10.034 \\
Spearman             & -2.715             & 0.208    & 3.737  & -5.768 & 1.387  & 2.930  & 0.306 & 0.788 & 10.118 \\
Sign                 & -2.552             & 1.297    & 2.626  & -6.454 & 2.842  & 5.634  & 0.564 & 2.045 & 4.939 \\ \hline                    
 \end{tabular}
\end{table}


\begin{center}
\begin{figure}[!htbp]
\caption{Boxplots of the QMLE, $t_5$-QMLE,  RMLTSE, and R-estimators (sign test, Spearman, and van der Waerden scores) under Gaussian mixture  (sample size $n = 300$;  $N = 300$ replications). The horizontal red line represents the actual parameter value.}
\begin{center}
\includegraphics[width=1\textwidth, height=0.5\textwidth]{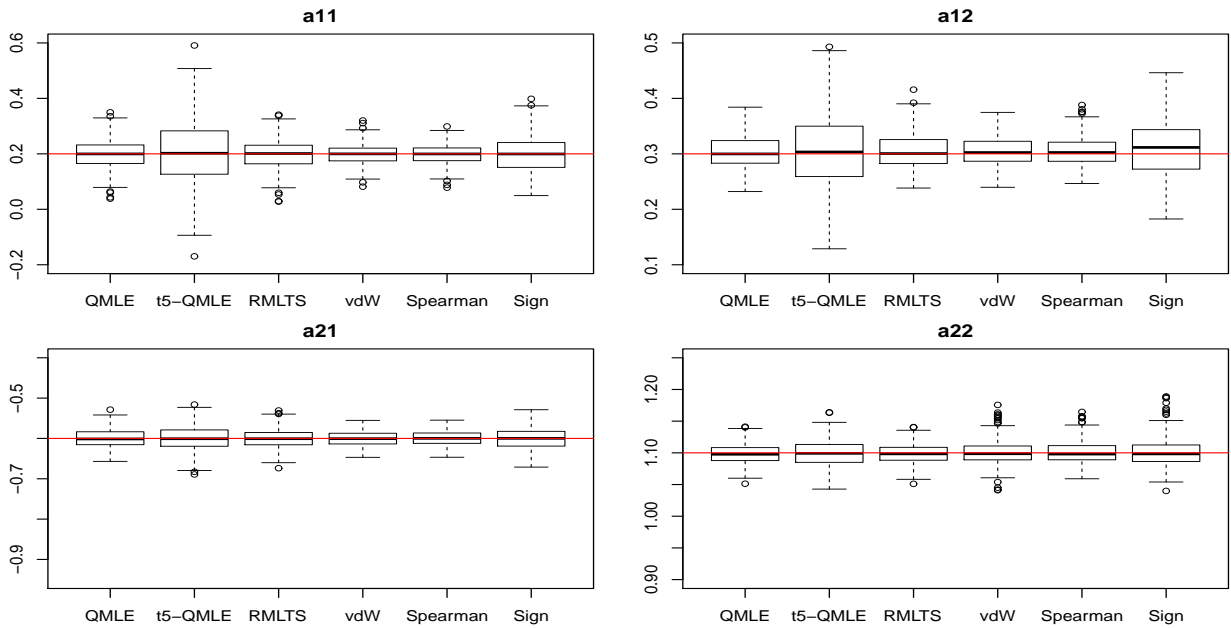}
\end{center}
\label{boxmixture300}
\end{figure}
\end{center}

\begin{center}
\begin{figure}[!htbp]
\caption{Boxplots of the QMLE, $t_5$-QMLE,  RMLTSE, and R-estimators (sign test, Spearman, and van der Waerden scores) under  spherical Gaussian innovations; sample size $n = 300$;  $N = 300$ replications.  The horizontal red line represents the actual parameter value.}
\begin{center}
\includegraphics[width=1\textwidth, height=0.5\textwidth]{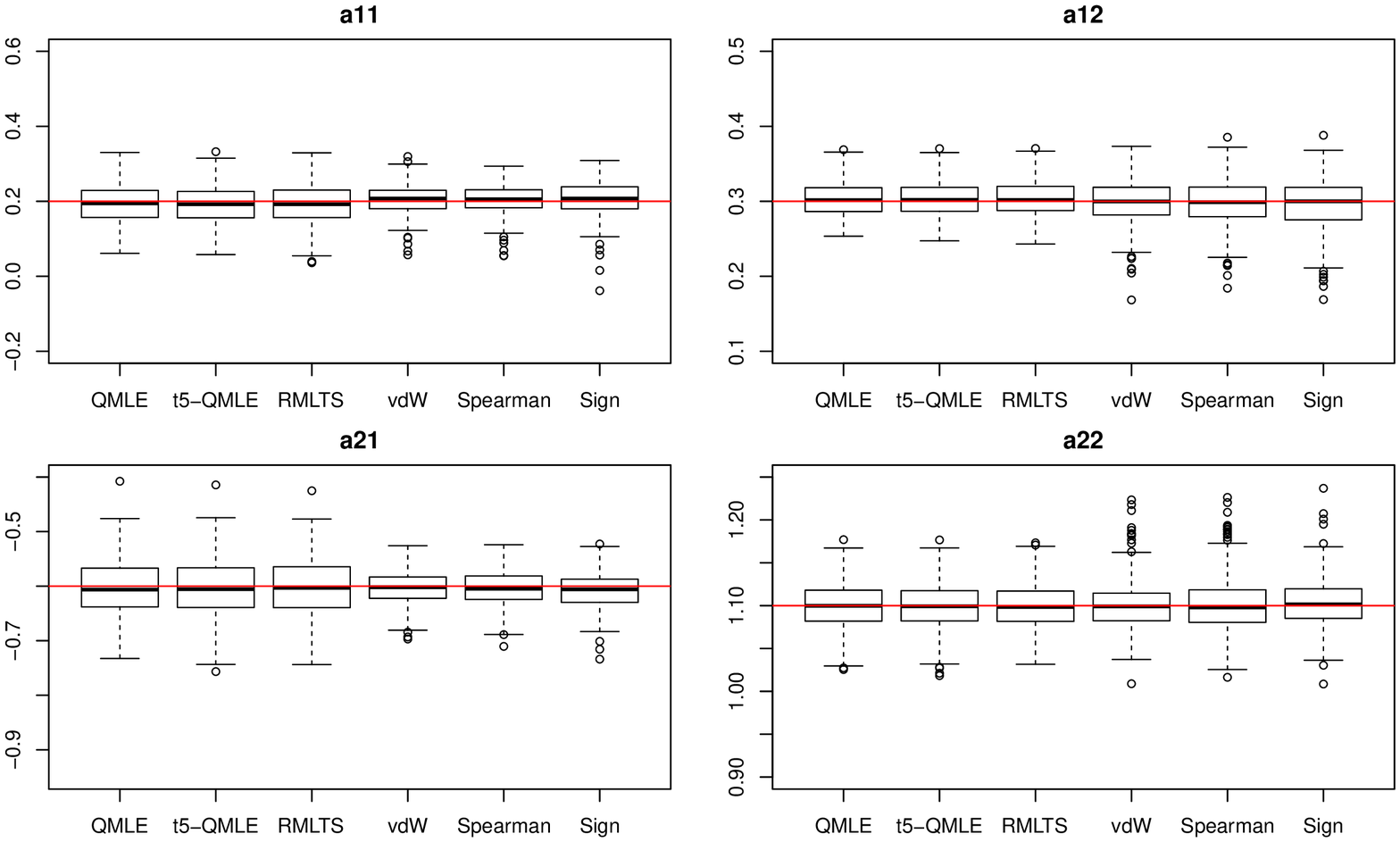} \vspace{-12mm}
\end{center}
\label{boxGaussiann300}
\end{figure}
\end{center}

\begin{center}
\begin{figure}[!htbp]
\caption{Boxplots of the QMLE, $t_5$-QMLE,  RMLTSE, and R-estimators (sign test, Spearman, and van der Waerden scores) under skew-normal innovations   (\ref{SN.density}); sample size $n = 300$;  $N = 300$ replications.  The horizontal red line represents the actual parameter value.}
\begin{center}
\includegraphics[width=1\textwidth, height=0.5\textwidth]{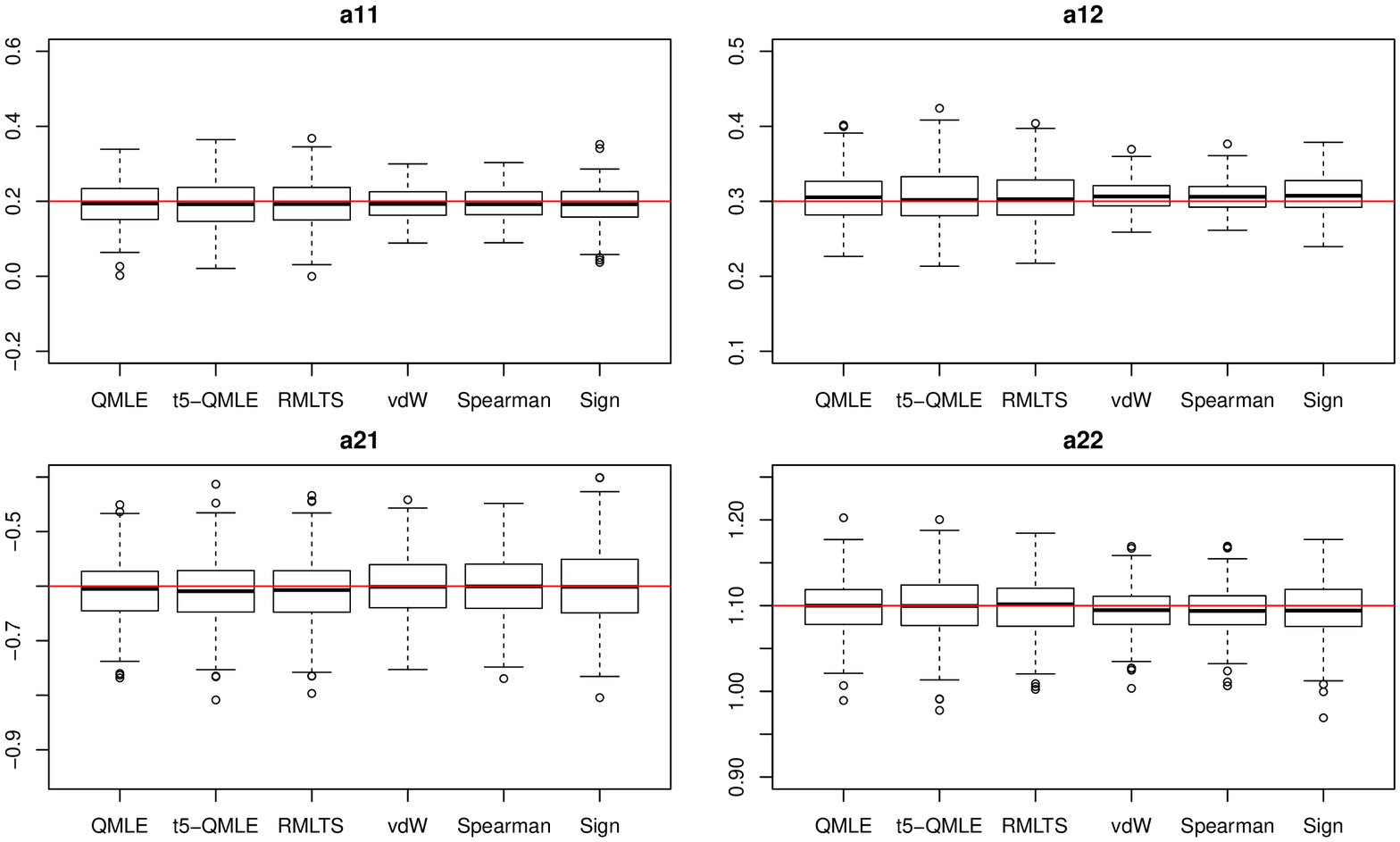} \vspace{-12mm}
\end{center}
\label{boxSkewNormaln300}
\end{figure}
\end{center}

\begin{center}
\begin{figure}[!htbp]
\caption{Boxplots of the QMLE, $t_5$-QMLE,  RMLTSE, and R-estimators (sign test, Spearman, and van der Waerden scores) under skew-$t_3$ innovations    (\ref{St.density}); sample size $n = 300$;  $N = 300$ replications.  The horizontal red line represents the actual parameter value.}
\begin{center}
\includegraphics[width=1\textwidth, height=0.5\textwidth]{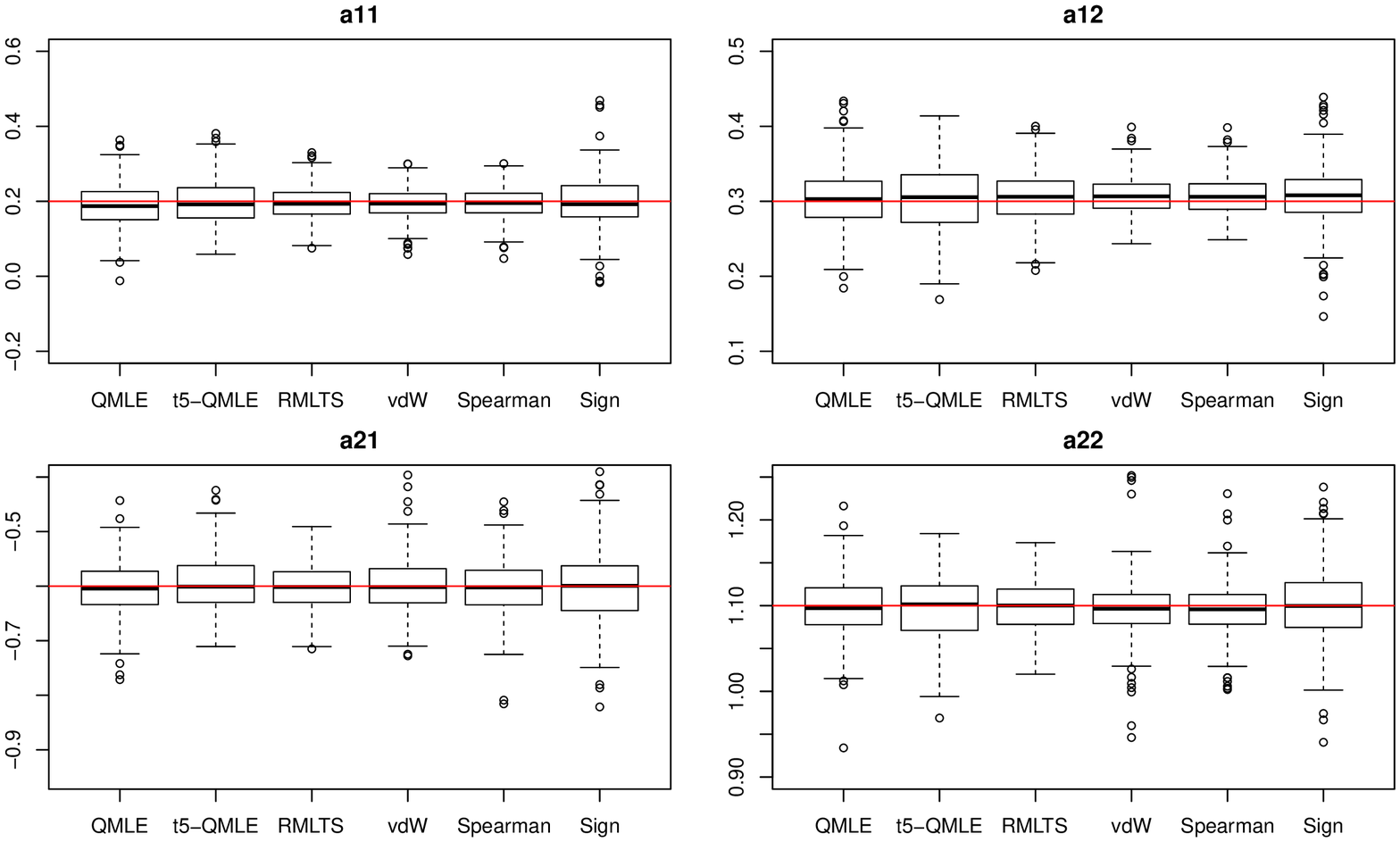} \vspace{-12mm}
\end{center}
\label{boxSkewt3n300}
\end{figure}
\end{center}

\begin{center}
\begin{figure}[!htbp]
\caption{Boxplots of the QMLE, $t_5$-QMLE,  RMLTSE, and R-estimators (sign test, Spearman, and van der Waerden scores) under spherical $t_3$ innovations; sample size $n = 300$;  $N = 300$ replications.  The horizontal red line represents the actual parameter value.}
\begin{center}
\includegraphics[width=1\textwidth, height=0.5\textwidth]{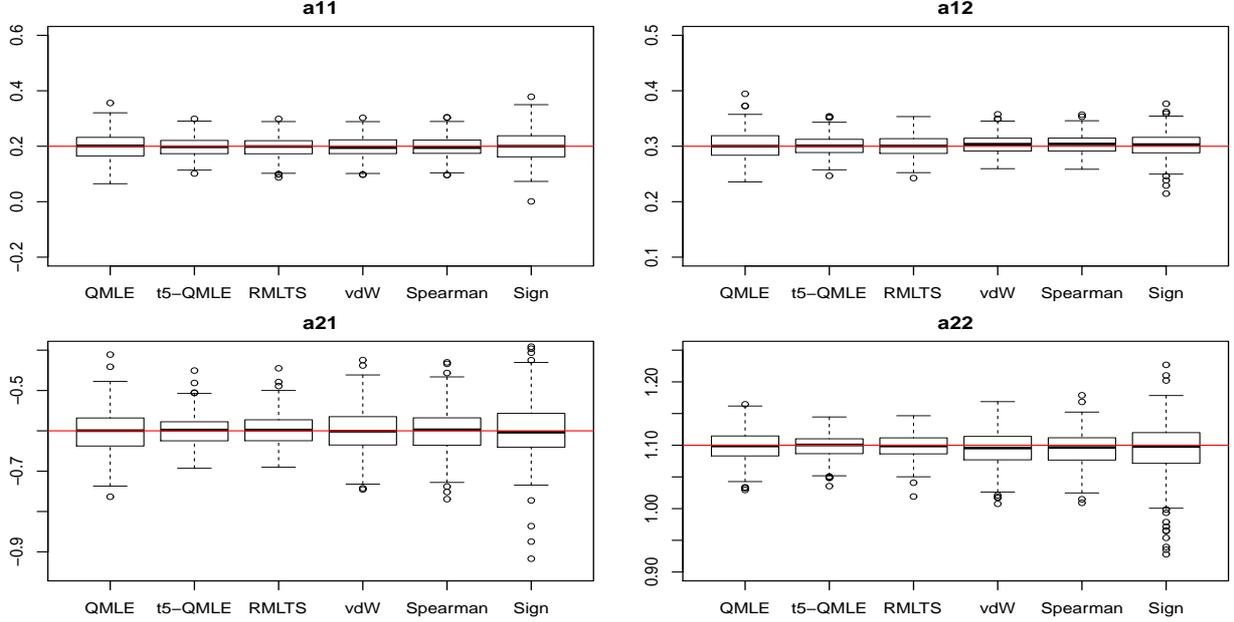} 
\end{center}
\label{boxt3n300}
\end{figure}
\end{center}


\subsection{Higher dimension}\label{dim3App}

Due to the rapid growth of their number of parameters, VARMA models are not meant for the analysis of high-dimensional time series (where different approaches are in order---see, e.g., Hallin et al.~(2020c)).  One may wonder, however, whether the attractive properties of R-estimators extend beyond the bivariate context. We therefore provide here some numerical results in dimension $d=3$. 

Consider the three-dimensional VAR($1$) model\vspace{-2mm}
\begin{equation*}\label{VAR3}
\left(\I_3 - \A L \right) \X_t =  \bepsilon_t, \quad t \in \Z,\vspace{-2mm}
\end{equation*}
with $\bth^\prime := \text{vec}^\prime(\A)= (0.55, 0.2, 0.13, -0.2, 0.5, -0.1, 0.1, 0.11, 0.6)$ satisfying  
Assumption~(A1). 
We are limiting our investigation to two selected innovation densities: the spherical  three-dimensional  Gaussian  and the Gaussian mixture 
\begin{equation}
\frac{3}{8}   {\cal N}(\bmu_1, \bSigma_1) + \frac{3}{8}   {\cal N}(\bmu_2, \bSigma_2) + \frac{1}{4} {\cal N}(\bmu_3, \bSigma_3),
\label{Eq. Mixture2}
\vspace{-2mm}\end{equation} 
with $$\bmu_1 = (-5, -5, 0)^\prime,\ \bmu_2 = (5, 5, 2)^\prime, \ \bmu_3 = (0, 0, -3)^\prime$$ 
and 
 $$\bSigma_1 = 
\begin{bmatrix}
7 & 3 & 5 \\
3 & 6 & 1 \\
5 & 1 & 7
\end{bmatrix}, \  
\bSigma_2 = 
\begin{bmatrix}
7 & -5 & -3 \\
-5 & 7 & 4 \\
-3 & 4 & 5 
\end{bmatrix}\text{, and 
 }\bSigma_3 = 
\begin{bmatrix}
4 & 0 & 0\\
0 & 3  & 0 \\
0 & 0 & 1
\end{bmatrix}.
$$
\begin{figure}[h!]
\caption{Boxplots of the QMLE  and R-estimator (van der Waerden scores) under the Gaussian mixture innovation density \eqref{Eq. Mixture2}  for $d=3$; sample size $n = 1000$;  $N = 300$ replications. In each panel, the MSE ratio of the QMLE with respect to the R-estimator is reported. The horizontal red line represents the actual parameter value.}
\begin{center}
\includegraphics[width=1.05\textwidth, height=1.1\textwidth]{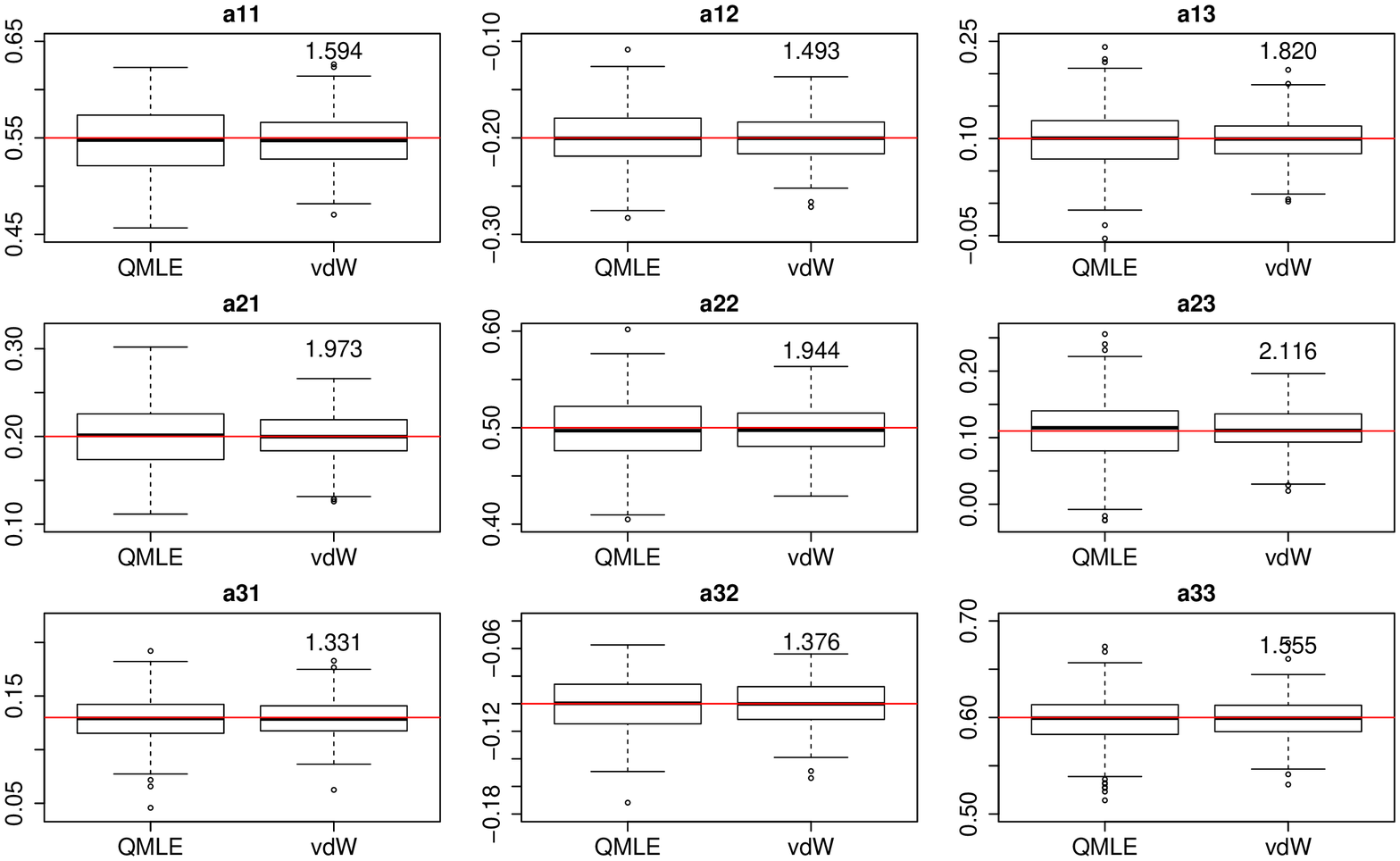}\vspace{-5mm}
\end{center}
\label{d3}
\end{figure}

For the computation of the center-outward ranks and signs, we used the algorithm described in Appendix~\ref{algsec} with $n_R = 15, n_S = 66, n_0 = 10$. For  numerical implementation, we generated regular grids on the sphere via the  routine {\tt UnitSphere} in R package {\tt mvmesh}, where we refer to for details. The boxplots for the Gaussian mixture and spherical Gaussian innovations are displayed in Figures~\ref{d3} and~\ref{3dNormal}, respectively. Inspection of Figures~\ref{d3} and~\ref{3dNormal} yields the same conclusions as in the bivariate motivating example (Figures~\ref{box3Mix}).

\begin{figure}[h!]
\caption{Boxplots of the QMLE and R-estimator (van der Waerden scores) under  spherical Gaussian for $d=3$; sample size $n = 1000$;  $N = 300$ replications. In each panel, the MSE ratio of the QMLE with respect to the R-estimator is reported. The horizontal red line represents the actual parameter value.}
\begin{center}
\includegraphics[width=1.05\textwidth, height=1.1\textwidth]{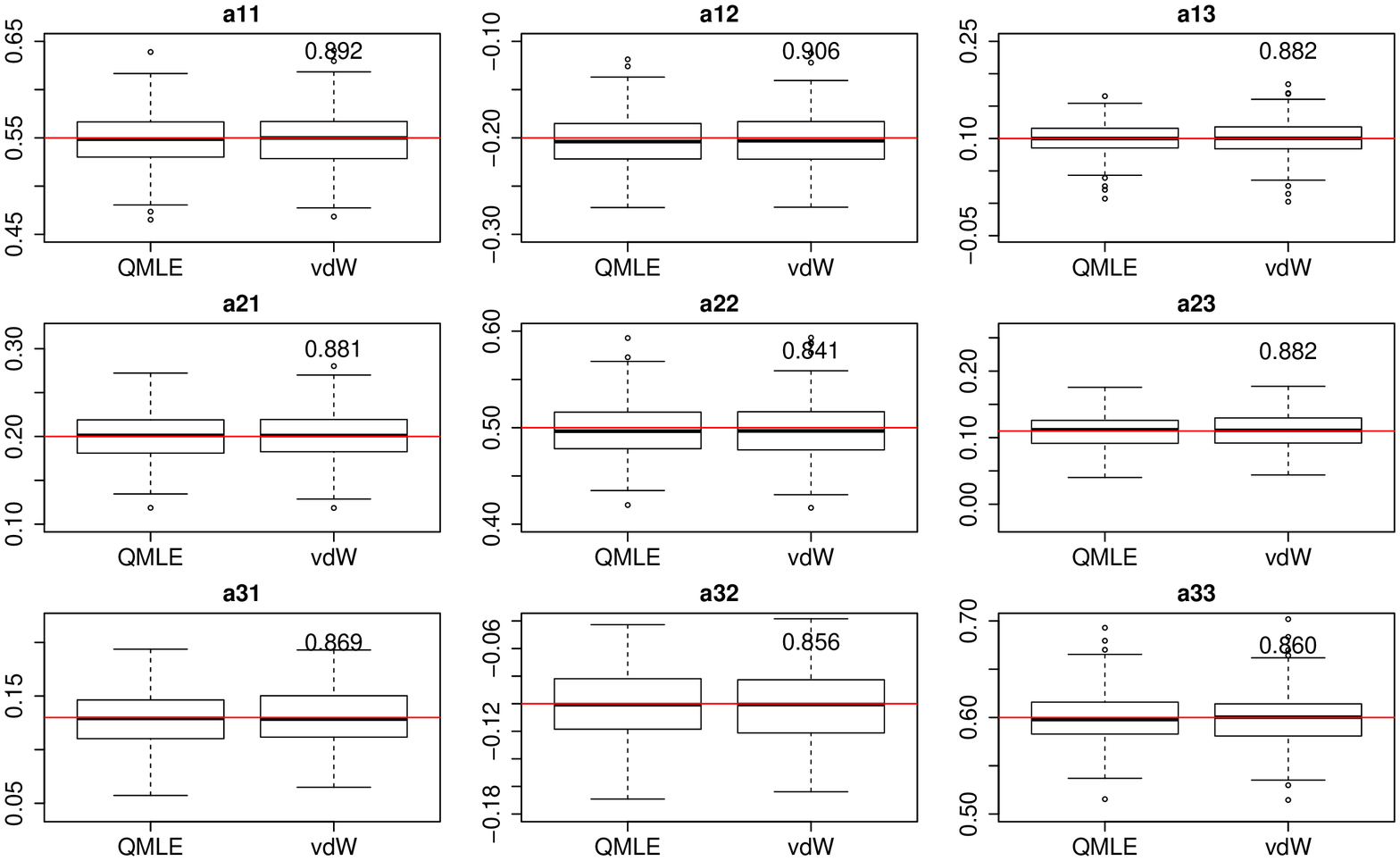}\vspace{-10mm}
\end{center}
\label{3dNormal}
\end{figure}

%

\section{Supplementary results for the real-data example }\label{tablesec} 

\subsection{Estimates for the VARMA(3,1) model}
To complement the real data example of Section~\ref{empirsec}, we provide here the table of estimated coefficients for the macroeconomic time series.

\begin{table}[h!]
\caption{The QMLE and R-estimates of $\bth$ in the VARMA($3, 1$) fitting of the econometric data (demeaned differenced Hstarts and Mortg series);  standard errors  are shown in parentheses. The datasets are demeaned  differenced Hstarts and Mortg series.\vspace{3mm} }\label{emp}
\begin{tabular}{cccccccccccc}
\hline
         & \multicolumn{2}{c}{$\A_1$} &  & \multicolumn{2}{c}{$\A_2$}       &  & \multicolumn{2}{c}{$\A_3$} &  & \multicolumn{2}{c}{$\B_1$} \\ \hline
QMLE     & 0.137 & 0.487 && -0.154 & -0.199 & &0.032 & 0.056  &&  -0.703 & -0.490 \\
         & (0.265) &  (0.353)  && (0.284)  & (0.130)  &&  (0.171) & (0.072)  & & (0.258)  & (0.350)  \\
         & 0.596 & 0.974 && 0.030  & -0.400 && 0.070 & 0.110  & &-0.152 & -0.636 \\
         & (0.327) & (0.537) && (0.436)  & (0.189)  & & (0.285) & (0.077)  & & (0.282)  & (0.533)  \\ \hline
vdW      & 0.155 & 0.526 & & -0.096 & -0.181 &&  0.017 & 0.038  &&  -0.705 & -0.527 \\
         & (0.141) & (0.088) & & (0.122)  & (0.079)  & & (0.133) & (0.062)  &  & (0.088)  & (0.071)  \\
         & 0.561 & 0.943 & & 0.094  & -0.386 & & 0.011 & 0.128  & & -0.161 & -0.627 \\
         & (0.148) & (0.079) & & (0.133)  & (0.100)  && (0.098) & (0.040)  & & (0.081)  & (0.015)  \\ \hline
Sign     & 0.087 & 0.536 & & -0.032 & -0.198 & & 0.075 & -0.044 & & -0.705 & -0.562 \\
         & (0.148) & (0.079) & & (0.133)  & (0.100)  & & (0.098) & (0.040)  & & (0.081)  & (0.015)  \\
         & 0.471 & 1.036 & & 0.107  & -0.403 & & 0.035 & 0.148  & & -0.161 & -0.627 \\
         & (0.178) & (0.084) & & (0.165)  & (0.073)  & & (0.138) & (0.061)  & & ($< 10^{-3}$)  & ($< 10^{-3}$)  \\ \hline
Spearman & 0.180 & 0.511 & & -0.090 & -0.180 & & 0.030 & 0.049  &&  -0.705 & -0.537 \\
         & (0.066) & (0.033) & & (0.092)  & (0.046)  & & (0.113) & (0.049)  &&  ($< 10^{-3}$)  & (0.014)  \\
         & 0.531 & 0.946 & & 0.072  & -0.374 & & 0.011 & 0.121  & & -0.161 & -0.627 \\
         & (0.124) & (0.054) &&  (0.115)  & (0.075)  &  & (0.112) & (0.042)  & & ($< 10^{-3}$)  & ($< 10^{-3}$)  \\ \hline
\end{tabular}
\end{table}

\subsection{Impulse response function: a compendium}

As explained in Section~\ref{empirsec}, impulse response functions provide a  convenient way of exploring the relation between the components of multiple time series. In particular, it is used to study the impact of changes in one variable on its own future values and those of other time series. For the $d$-dimensional VARMA($p, q$) model in (\ref{VARMA_mod}), the impulse response function can be obtained as follows.

Write  (\ref{VARMA_mod}) under  the corresponding VMA($\infty$) form 
$$\X_t = \W(L)  \bepsilon_t, \quad t \in \mathbb{Z},$$
where $$\W(L):= \sum_{l=0}^{\infty} \W_l L^l = \left(\I_d - \sum_{i = 1}^p \A_i L^i \right)^{-1}  \left(\I_d + \sum_{j = 1}^q \B_j L^j\right) \bepsilon_t$$
with $\W_l$ being the coefficient at lag $l$. 

Now, suppose that we are interested in studying the impact on $\X_{t+h}$, $h\geq 0$ of increasing the value at time $t$ of the  $k$th series $X_{kt}$, $1\leq k \leq d$ by one unit. Without loss of generality, we can assume  $t=0$. Setting $\X_t = \0$ for $t \leq 0$, $\bepsilon_0 = \e_k$ and $\bepsilon_t = \0$ for $t > 0$, where  $\e_k$ denotes the $k$th unit vector in the canonical basis of $\mathbb{R}^d$, we then have
$$\X_0 = \bepsilon_0 = \e_k, \quad \X_1 = \W_1 \bepsilon_0 = \W_{1, k}, \quad \X_2 = \W_2 \bepsilon_0 = \W_{2, k},\quad ...,$$
where $\W_{l, k}$ denotes the $k$th column of $\W_l$. Therefore, the impact under study 
 is reflected in the~$k$th column of the coefficient matrix $\W_{h}$. For this reason, the coefficient matrices~$\{\W_{h, k}; h \geq 0\}$ are referred to as the coefficients of impulse response functions; see Tsay~(2014, Chapter 2 and 3) for further  details.

\end{document}